\newtheorem{theorem}{Theorem}
\newtheorem{lemma}[theorem]{Lemma}
\newtheorem{proposition}[theorem]{Proposition}
\newtheorem{remark}[theorem]{Remark}
\newenvironment{proof}[1][Proof]{\noindent\textbf{#1.} }{\ \rule{0.5em}{0.5em}}
\author[a]{Ioannis Dimitriou \footnote{ idimit@uoi.gr}\footnote{Corresponding author.}}
\author[b]{Ivo J.B.F. Adan\footnote{i.adan@tue.nl}}
\affil[a]{\small Department of Mathematics, 
	University of Ioannina, 
	45110, Ioannina, Greece.}
\affil[b]{\small Department of Industrial Engineering and Innovation Sciences, Eindhoven University of Technology, P.O. Box 513, 5600 MB Eindhoven, the Netherlands}
\begin{document}
\title{On vector-valued functional equations with multiple recursive terms}

\maketitle
\begin{abstract}
 In this work, we study vector-valued functional equations with multiple recursive terms that arise naturally when we are dealing with vector-valued multiplicative Lindley-type recursions. We provide a detailed framework for the solution of such equations. Our theoretical results are applied in a wide range
of semi-Markovian queueing, and vector-valued autoregressive processes.
    \end{abstract}
    \vspace{2mm}
	
	\noindent
	\textbf{Keywords}: {Vector-valued recursion; Laplace-Stieltjes transforms; Generating functions; Markov-modulation}

\section{Introduction}
The primary aim of this work is to solve systems of functional equations of the form
\begin{equation*}
    \begin{array}{rl}
        \begin{pmatrix}
            z_{1}(s)\\
            z_{2}(s)\\
            \vdots\\
            z_{N}(s)
        \end{pmatrix}= &\begin{pmatrix}
            h_{1,1}(s)&h_{2,1}(s)&\ldots&h_{N,1}(s)\\
             h_{1,2}(s)&h_{2,2}(s)&\ldots&h_{N,2}(s)\\
             \ldots&\ldots&\ldots&\ldots\\
              h_{1,N}(s)&h_{2,N}(s)&\ldots&h_{N,N}(s)\\
        \end{pmatrix}\times\begin{pmatrix}
            z_{1}(\alpha_{1}(s))\\
            z_{2}(\alpha_{2}(s))\\
            \vdots\\
            z_{N}(\alpha_{N}(s))
        \end{pmatrix}+\begin{pmatrix}
            v_{1}(s)\\
            v_{2}(s)\\
            \vdots\\
            v_{N}(s)
        \end{pmatrix},
    \end{array}
\end{equation*}
for $Re(s)\geq 0$, where $\alpha_{i}(s)$, $i=1,\ldots,N$ are commutative contraction mappings. Equivalently, in matrix notation
\begin{equation}
         \tilde{Z}(s)=H(s)\sum_{i=1}^{N}\tilde{P}^{(i)}\tilde{Z}(\alpha_{i}(s))+\tilde{V}(s),\,Re(s)\geq 0,\label{opq}
    \end{equation}
    where, $\tilde{Z}(s):=(z_{1}(s),\ldots,z_{N}(s))^{T}$, and
    \begin{displaymath}
        H(s):=\begin{pmatrix}
            h_{1,1}(s)&h_{2,1}(s)&\ldots&h_{N,1}(s)\\
             h_{1,2}(s)&h_{2,2}(s)&\ldots&h_{N,2}(s)\\
             \ldots&\ldots&\ldots&\ldots\\
              h_{1,N}(s)&h_{2,N}(s)&\ldots&h_{N,N}(s)\\
        \end{pmatrix},\,\tilde{V}(s):=\begin{pmatrix}
            v_{1}(s)\\
            v_{2}(s)\\
            \vdots\\
            v_{N}(s)
        \end{pmatrix},
    \end{displaymath}
    are known matrix/vector functions, $\tilde{P}^{(i)}:=(\tilde{P}^{(i)})_{p,q}$, $i,p,q\in E=\{1,2,\ldots,N\}$ is an $N\times N$ matrix, with the element $\tilde{P}^{(i)}_{i,i}=1$, and all the other elements $\tilde{P}^{(i)}_{p,q}=0$, $p,q\neq i$. Note that $\sum_{i=1}^{N}\tilde{P}^{(i)}=I$, that is the identity matrix. Moreover, $\tilde{P}^{(i)}$ is a projection matrix onto the $i$th coordinate, $i\in E$. Such type of functional equations arise naturally when we are dealing with the stationary behaviour of vector-valued multiplicative Lindley-type recursions of a certain type with numerous applications in applied probability, including semi-Markovian queueing models, and vector-valued reflected autoregressive processes that are discussed in detail in the following sections. A queueing example of such a model is a Markov-modulated queue where arrivals and services are dependent on a background Markov chain, and the focus is on the workload just before an arrival that adds work in the system but makes obsolete a fixed portion of the work that is already in the system. This fixed portion also depends on the state of the background process and is related to the function $\alpha_{i}(s)$, $i\in E$. 

    To help the reader with the notation, unless otherwise specified, vectors/matrices are represented by uppercase letters, possibly augmented with diacritical marks such as a tilde or a hat, whereas their elements are denoted by lowercase letters.

The corresponding transient behaviour of such processes is described by similar, although a bit more complicated, functional equations of the form
    \begin{equation}
        \tilde{Z}(r,s,\eta)=G(r,s,\eta)\sum_{i=1}^{N}\tilde{P}^{(i)}\tilde{Z}(r,\alpha_{i}(s),\eta)+K(r,s,\eta),\label{basic0}
    \end{equation}
    for $Re(s)\geq 0$, $Re(\eta)\geq 0$, $|r|<1$, and $\tilde{Z}(r,s,\eta):=(z_{1}(r,s,\eta),z_{2}(r,s,\eta),\ldots,z_{N}(r,s,\eta))^{T}$. Moreover, 
    \begin{displaymath}
        G(r,s,\eta)=r\begin{pmatrix}
            g_{1,1}(s,\eta)&g_{2,1}(s,\eta)&\ldots&g_{N,1}(s,\eta)\\
             g_{1,2}(s,\eta)&g_{2,2}(s,\eta)&\ldots&g_{N,2}(s,\eta)\\
             \ldots&\ldots&\ldots&\ldots\\
              g_{1,N}(s,\eta)&g_{2,N}(s,\eta)&\ldots&g_{N,N}(s,\eta)\\
        \end{pmatrix},\,\,K(r,s,\eta)=\begin{pmatrix}
        k_{1}(r,s,\eta)\\
            k_{2}(r,s,\eta)\\
            \vdots\\
            k_{N}(r,s,\eta)
        \end{pmatrix}.
    \end{displaymath}
    We assume that $G(r,s,\eta):=rU(s,\eta)$, $K(r,s,\eta)$ are known matrix/vector functions. 

A primary aim of this work is to extend the analysis in \cite{adan} to a matrix-valued framework. In particular, \eqref{opq} corresponds to a vector-valued analogue of equation (2) in \cite{adan}. Functional equations of the form \eqref{opq}, \eqref{basic0} arise naturally from the analysis of multiplicative vector-valued stochastic recursions of the form
\begin{equation}
    \tilde{Z}_{n+1}=[V_{n}(Y_{n})\tilde{Z}_{n}+\tilde{S}_{n}(Y_{n})-\tilde{A}_{n}(Y_{n})]^{+},\label{vbn}
\end{equation}
where $\{Y_{n};n\in\mathbb{N}\}$ denotes an irreducible Markov chain with a finite state space $E=\{1,2,\ldots,N\}$, and vectors $\tilde{Z}_{n},\tilde{S}_{n}(Y_{n}),\tilde{A}_{n}(Y_{n})\in \mathbb{R}^{n}$, and where $V_{n}(Y_{n}),\tilde{S}_{n}(Y_{n}),\tilde{A}_{n}(Y_{n})$ depend on the state of $Y_{n}$, i.e., we consider Markov-modulated vector-valued stochastic recursions. 

Going one step further, our aim is also to deal with the solution of multidimensional versions of \eqref{opq}, which are quite challenging. A special case of such a class is given in \eqref{masip},
    \begin{equation}
        \tilde{Z}(s,t)=K(s,t)+R(s,t)\sum_{i=1}^{N}\tilde{P}^{(i)}\tilde{Z}(t,\alpha_{i}(t))+T(s)\sum_{i=1}^{N}\tilde{P}^{(i)}\tilde{Z}(s,\alpha_{i}(t)),\label{masip}
    \end{equation}
where $K(s,t)$, $R(s,t)$, $T(s)$, are known matrix-valued functions and $\alpha_{i}(t)$ are commutative contraction mappings in the right-half plane (e.g., $\alpha_{i}(t):=a_{i}t$, $a_{i}\in(0,1)$, $i=1,\ldots,N$). Note that this type of functional equation is different from \eqref{opq} since now we have two coupled recursions: one in $t$ and one in $s$, which creates cross dependency between variables. However, the solution machinery is closely related to the one for \eqref{opq}. Vector-valued functional equations of the form \eqref{masip} arise naturally in the modeling of Markov-modulated two-queue tandem queues with L\'evy input and consumption (see \cite{boxkelres} for the scalar case, where the contractions in the arguments on the right hand side do not depend on a background Markov chain). There, our focus is on deriving the Laplace-Stieltjes transform (LST) of the steady-state joint buffer content distribution, which is shown to satisfy a vector-valued functional equation of the form \eqref{masip}.

In view of multidimensional versions of functional equations \eqref{opq}, \eqref{basic0}, it would be of great interest to consider vector-valued reflected autoregressive processes that are described by stochastic recursions of the form $\tilde{Z}_{n+1}=[A\tilde{Z}_{n}+\tilde{S}_{n}-\tilde{A}_{n}]^{+}$, where the autoregressive parameter $A$ is now an $N\times N$ known matrix with elements $a_{i,j}\in[0,1)$, and our main concern is to provide the joint LST of the stationary vector $\tilde{Z}$. Under specific assumptions, the vector of the marginal LSTs is shown to satisfy a (more general) functional equation of the form \eqref{opq} (that can be handled in a similar way). On the other hand, in the case where our focus is on the joint LST of the stationary vector $\tilde{Z}$, the functional equation has a different form. In particular, when $N=2$ we have:
\begin{equation}
\begin{array}{rl}
    f(s_{1},s_{2})=&c_{1}(s_{1},s_{2})f(T(s_{1},s_{2}))-c_{2}(s_{1},s_{2})f(T(s_{1},\lambda_{2}))\vspace{2mm}\\
    &-c_{3}(s_{1},s_{2})f(T(\lambda_{1},s_{2}))+c_{4}(s_{1},s_{2})f(T(\lambda_{1},\lambda_{2})),
    \end{array}\label{bzaa}
\end{equation}
where, $T(s_{1},s_{2}):=A^{T}\tilde{s}=(a_{1,1}s_{1}+a_{2,1}s_{2},a_{1,2}s_{1}+a_{2,2}s_{2})$, $\tilde{s}=(s_{1},s_{2})^{T}$, and $c_{i}(s_{1},s_{2})$, $i=1,2,3,4$ are known functions. The form of \eqref{bzaa} seems to be simpler compared to \eqref{opq}, since the autoregressive parameter (i.e., the matrix $A$) does not depend on the state of a background Markov chain, although we have a two-dimensional argument that is expressed as a linear combination of $s_{1}$, $s_{2}$ that complicates the analysis. However, in case $T(s_{1},s_{2})$ is a contraction mapping, its solution method follows steps similar to the one of \eqref{opq}, since it relies on the use of multiple recursive terms.
\subsection{Related work}
In the following, we provide a brief overview of the existing analytical results in the scalar case, since to the best of our knowledge, the only vector-valued version of the stochastic recursion \eqref{vbn} that has been treated analytically refers to the case where $V_{n}(Y_{n})=a\in(0,1)$; see \cite{dimitriou2024markov}.

In \cite{box1}, the authors considered the (scalar) recursion $Z_{n+1}=[aZ_{n}+S_{n}-A_{n+1}]^{+}$ (with $[x]^{+}:=max\{x,0\}$), where $\{S_{n}-A_{n+1}\}_{n\in\mathbb{N}_{0}}$ forms a sequence of independent and identically
distributed (i.i.d.) random variables and $a\in(0,1)$. The authors provided explicit results for the case where $\{S_{n}\}_{n\in\mathbb{N}_{0}}$ being a sequence
of independent $exp(\lambda)$ distributed random variables, and $\{Y_{n}\}_{n\in\mathbb{N}_{0}}$ i.i.d., nonnegative and independent of $\{S_{n}\}_{n\in\mathbb{N}_{0}}$ with
distribution function $F_{Y}(.)$ and Laplace–Stieltjes transform (LST) $\phi_{Y}(.)$. Note that in such a case, $Z_{n}$ could be interpreted as the workload in a queueing system just before the $n$th arrival, which adds $Y_{n}$ work, and makes obsolete a fixed fraction $1-a$ of the already present work. The case where $a=1$ corresponds to the classical Lindley recursion describing the waiting time of the classical M/G/1 queue, while the case where $a=-1$ was investigated in \cite{vlasiou}. Further progress has recently been made in \cite{box2} where the scalar autoregressive process described by the recursion $Z_{n+1}=[V_{n}Z_{n}+S_{n}-A_{n+1}]^{+}$ was investigated, where $\{V_{n}\}_{n\in\mathbb{N}_{0}}$ a sequence of i.i.d. random variables. In \cite{box3}, the authors considered the case where $V_{n}Z_{n}$ replaced by $F(Z_{n})$, where $\{F(t)\}$ is a Levy subordinator (recovering also the case in \cite{box1}, where $F(t)=at$). Recently, in \cite{boxman}, the authors motivated by applications that arise in queueing and insurance risk models, they considered Lindley-type recursions where the sequences $\{S_{n}\}_{n\in\mathbb{N}_{0}}$, $\{A_{n}\}_{n\in\mathbb{N}_{0}}$ obey a semi-linear dependence. These recursions can also be treated as of autoregressive type. Furthermore, the authors in \cite{adan} developed a method to study scalar versions of functional equations of the form \eqref{opq} that arise in a wide range of queueing, autoregressive and branching processes. Finally, the author in \cite{hoo}, considered a generalized version of the model in \cite{box2}, by assuming $V_{n}$ to take values in $(-\infty,1]$. In \cite[eq. (9)]{adan3}, the authors considered a queueing system with two classes of impatient customers, leading to a specific example of \eqref{basic0} that was analyzed in detail; see also \cite{kumar2021}. Quite recently, in \cite{dimitriou2024} the authors generalized the work in \cite{box1} by considering, among others, non-trivial dependence structures among $\{S_{n}\}_{n\in\mathbb{N}_{0}}$, $\{A_{n}\}_{n\in\mathbb{N}}$. We also mention the work in \cite{boxkelres} where, among others, the authors attempted to provide analytical results for ASIP (asymmetric inclusion process) tandem queues with consumption, the analysis of which leads to a functional equation that can be solved similarly as those in \cite{box3}.  

A primary motivation for our work is related to recent results on vector-valued reflected autoregressive processes. Quite recently, in \cite{dimitriou2024markov}, the author investigated vector-valued reflected autoregressive processes, where the sequences $\{S_{n}\}_{n\in\mathbb{N}_{0}}$, $\{A_{n}\}_{n\in\mathbb{N}}$ are governed by an irreducible background Markovian process with finite state space, i.e., the author considered Markov-modulated reflected autoregressive processes, where each transition of the Markov chain generates a new interarrival time $A_{n+1}$ and its corresponding service time $S_{n}$, thus, considered the Markov-dependent version of the process analysed in \cite{box1}. The analysis in \cite{dimitriou2024markov} results in vector-valued functional equations of the form \eqref{dimm}.
    \begin{equation}
    \begin{array}{rl}
        \tilde{Z}(r,s,\eta)=&G(r,s,\eta)\tilde{Z}(r,as,\eta)+K(r,s,\eta),\vspace{2mm}\\
        \tilde{Z}(s)=&H(s)\tilde{Z}(as)+\tilde{V}(s),
        \end{array}\label{dimm}
    \end{equation}
    where $a\in(0,1)$. 

Note that the specific case of $a = 1$ in \cite{dimitriou2024markov} corresponds to the waiting time in a single server queue with Markov-dependent interarrival and service times studied in \cite{adan2}. Markov-dependent structure of the form considered in \cite{adan2} has also been used in insurance mathematics; see \cite{albbox}. The process analysed in \cite{adan2} (i.e., for $a=1$) is a special case of the class of processes studied in \cite{asmkella}, although in \cite{adan2}, all the results were given explicitly. The case where $a=-1$ was investigated in \cite{vlasioudep} (see also \cite[Chapter 5]{vlasiou}) in the context of carousel models. In \cite{dimitriou2024markov} the author focused on the case where $a\in(0,1)$. Moreover, contrary to the case considered in \cite{adan2,vlasioudep}, in which given the state of the Markov chain at times $n$, $n+1$, the distributions of $A_{n+1}$, $S_{n}$ are independent of one another for all $n$ (although their distributions depend on the state of the background Markov chain), the author further considered the case where there is a dependence among  $\{S_{n}\}_{n\in\mathbb{N}_{0}}$, $\{A_{n}\}_{n\in\mathbb{N}}$ based on Farlie-Gumbel-Morgenstern (FGM) copula. More precisely, $\{(S_{n},A_{n+1})\}_{n\in\mathbb{N}_{0}}$ form a sequence of i.i.d. random vectors with a distribution function defined by FGM copula and dependent on the state of the underlying discrete time Markov chain. The author also considered the case where $\{(S_{n},A_{n+1})\}_{n\in\mathbb{N}_{0}}$ have a bivariate matrix-exponential distribution, which is dependent on the state of the underlying discrete time Markov chain, as well as the case where there is a linear dependence among them. Special treatment was also given to the analysis of the case where the service times were dependent on the waiting time, as well as to the case where the server's speed is workload proportional, i.e., a modulated shot-noise queue. The time-dependent analysis of a Markov-modulated reflected autoregressive process was also investigated.
\subsection{Our contribution} Our primary goal is to explore a class of vector-valued functional equations \eqref{opq}, \eqref{basic0}, \eqref{masip} that arise naturally when we are dealing with Markov-modulated stochastic recursions. More precisely, vector-valued functional equations \eqref{opq}, \eqref{basic0} arise in the analysis of stochastic recursions \eqref{vbn}. Equations of the form \eqref{masip} arise when we are dealing with multi-dimensional versions of \eqref{vbn}.

In Section \ref{theory}, we provide a detailed theoretical framework in solving \eqref{opq}, \eqref{basic0} (see Theorems \ref{th1}, \ref{tranth}), and thus extending the methodology developed in \cite{adan} from the scalar to the vector/matrix framework, also taking into account the transient behavior. We also provide a theoretical framework to solve \eqref{masip}; see Theorem \ref{th-masip-full}. We emphasize that although \eqref{masip} is of more general structure compared with \eqref{opq}, its solution machinery is similar.

The theoretical framework developed in Section \ref{theory} is applied in a number of application examples: More precisely, we first consider a MAP/G/1-type reflected autoregressive process, the analysis of which results in  a functional equation of the type given in \eqref{opq}. We also go one step further and consider extensions, under which given the state of the background Markov chain, there is additional dependence structure based on the FGM copula, i.e., the service and the interarrival times are not conditionally independent, but instead they are dependent based on the FGM copula. In this work, we have mainly restricted ourselves to the case where $\alpha_{i}(s)$ are commutative contraction mappings. In most of the application examples, we focus on the case where $\alpha_{i}(s):=a_{i}s$. However, in subsection \ref{mark} we deal with a commutative mapping that is not a contraction as in \cite{adan3}, i.e., $\alpha_{i}(s)=s+\mu_{i}c$, $i=1,\ldots,N$. In subsection \ref{inar}, we consider an even more general version of \eqref{opq}, related to the generating function of stationary queue-length distribution of a Markov-modulated $M/G/1$ queue with a general impatience scheme. This model is described by an integer-valued Markov-modulated reflected autoregressive process.

We then cope with the transient analysis of stochastic recursions \eqref{vbn} that results in a vector-valued functional equation of the form \eqref{basic0}, thus, extending considerably the work in \cite{dimitriou2024markov}, where vector-valued functional equations of the form in \eqref{dimm} were investigated, as well as the seminal works in \cite{adan,box1}, where scalar functional equations of the form \eqref{basic0}, \eqref{opq} were studied, respectively. More explicitly, we focus on a Markov modulated M/G/1-type reflected autoregressive process. The analysis of such a model leads to a vector-valued functional equation of the form \eqref{basic0}. To come up with this equation, we make use of Liouville's theorem \cite{tit} and Wiener-Hopf boundary value theory \cite{cohen}. We further consider the transient analysis of a Markov-modulated fluid flow model with consumption by generalising existing work in fluid queues by incorporating the concept of consumption. That case leads also to a functional equation of the form \eqref{basic0}.  

Our next contribution relies on extending the analysis to multidimensional versions of \eqref{opq}, as given in \eqref{masip}. An interesting application example is a modulated ASIP tandem system of two queues with L\'evy input and consumption. This is the Markov-modulated version of the model in \cite{boxkelres}, where we have further assumed that the consumption rate depends on the state of the background Markov chain. Thus, we considerably generalize \cite{boxkelres}. Our analysis leads to a vector valued functional equation of the form \eqref{masip} for the LST of the steady-state joint buffer content distribution. 

Finally, we investigate more intricate vector-valued functional equations than \eqref{opq} by focusing on the stationary analysis of a vector-valued autoregressive process (VAR(1)) that is described by stochastic recursions of the form $\tilde{Z}_{n+1}=[A\tilde{Z}_{n}+\tilde{S}_{n}-\tilde{A}_{n}]^{+}$, where $\tilde{Z}_{n}=(Z_{1,n},\ldots,Z_{N,n})^{T}$, and $A$ is an $N\times N$ matrix. Under some independence assumptions, the vector of the marginal LSTs of the stationary version $\tilde{Z}$ satisfies a slightly more general functional equation of the form \eqref{opq}, although its solution method is similar. A quite challenging part is to consider the LST the stationary joint distribution of $\tilde{Z}$ that satisfies a vector-valued functional equation of the form \eqref{bzaa}. Despite the fact that the form of \eqref{bzaa} is quite complicated, and the arguments of each component of the LST are linear combinations of $s_{i}$, $i=1,\ldots,N$, we succeed to solve it in case where the argument of $f(.)$ in the right hand side of \eqref{bzaa} is an $N$-dimensional contraction mapping. Its solution method is similar to that of \eqref{opq} and involves multiple recursive terms; see Theorem \ref{thvar}.

\subsection{Organization of the paper}
The remainder of the paper is organized as follows. In Section \ref{theory}, we present the general theory to solve equations \eqref{opq}, \eqref{basic0}, \eqref{masip}. The results obtained there are applied in in a series of application examples presented in Section \ref{appl}. In subsection \ref{mdc}, we investigate the stationary behaviour of a particular autoregressive-type MAP/G/1 queue with dependencies between successive service times and between inter-arrival times and service times, as well as an extension that incorporates additional dependencies based on the FGM copula. The analysis results on a functional equation of the form \eqref{opq}, where $\alpha_{i}(s)=a_{i}s$, $a_{i}\in(0,1)$, $i=1,\ldots,N$. Subsection \ref{shot} is devoted to the stationary analysis of a modulated shot-noise queue, while in subsection \ref{mark} we focus on the stationary behaviour of a modulated Markovian queue with dependencies among service time and waiting time. An integer vector-valued reflected autoregressive process is investigated in subsection \ref{inar}. In subsection \ref{mmc}, we focus on the time-dependent analysis of a Markov-modulated reflected autoregressive process, which corresponds to a more general version of the model considered in Section \ref{mdc}, and where its analysis results in \eqref{basic0}. The transient analysis of a Markov-modulated fluid queue with consumption is treated in subsection \ref{fluid}. 

In Section \ref{multi}, we consider multidimensional versions of \eqref{opq} of the form given in \eqref{masip}, and thus solved by using Theorem \ref{th-masip-full}. The stationary behaviour of a Markov-modulated ASIP tandem queue with consumption is investigated in subsection \ref{asip}, and its analysis result in \eqref{masip}. Subsection \ref{var2} refers to stationary analysis of reflected VAR(1) processes. We consider two instances of such processes. The analysis of the first one results in a generalized version of \eqref{opq}. The second is much more general (see \eqref{bzaa} when $N=2$), since the solution approach needs now the argument of the unknown LST on the right-hand side of the functional equation to be a $N$-dimensional contraction mapping. Conclusions and some topics for further research are presented in Section \ref{conc}.
\section{The recursion: general theory}\label{theory}
In this section, we present the general theory for solving vector-valued functional equations of the form \eqref{opq} for the vector $\tilde{Z}(s)$, whose components are defined by $z_{i}(s):=\mathbb{E}(e^{-sW}1_{\{Y=i\}})$, $i=1,\ldots,N$, and $W$ is a non-negative random variable (with $\mathbb{E}(W)<\infty$) modulated by a background Markov chain with state-space $E=\{0,1,\ldots,N\}$ (it is the LST (with respect to $W$) of the joint distribution of $(W,Y)$ evaluated at $Y=i$). Let $Y$ denote the state of this Markov chain in stationarity, with distribution $\tilde{\pi}:=(\pi_{1},\ldots,\pi_{N})^{T}$. Each component $z_{i}(s)$ represents the contribution to the transform arising from realizations where the background Markov chain is in state 
$i$. The functional equation describes a Markov-modulated stochastic fixed-point relation, where the terms $z_{i}(\alpha_{i}(s))$ correspond to evaluating the transform at contracted arguments, while the matrix
$H(s)$ captures the interaction between states of the background Markov chain, and 
the vector $\tilde{V}(s)$ represents an external input, such that $\tilde{V}(s)=\tilde{0}$ (i.e., the $N\times 1$ vector of zeros).

Assume that the elements of $H(s)$, $\tilde{V}(s)$ are analytic functions, i.e there is a $\zeta>0$ such that $h_{i,j}(s)$, $v_{j}(s)$ can be continued analytically in $Re(s)>-\zeta$. Moreover, $\alpha_{i}(s)$, $i=1,\ldots,N$ are commutative contraction mappings on the closed positive half plane. More precisely, let $\mathbb{C}_{+}:=\{s\in \mathbb{C}:Re(s)\geq 0\}$ and define $\alpha_{i}:\mathbb{C}_{+}\to \mathbb{C}_{+}$, $i=1,\ldots,N$ with
\begin{displaymath}
    |\alpha_{i}(s)-\alpha_{i}(u)|\leq \kappa |s-u|,\,s,u\in\mathbb{C}_{+},
\end{displaymath}
with $\kappa\in(0,1)$ and
\begin{displaymath}
    \alpha_{i}(\alpha_{j}(s))=\alpha_{j}(\alpha_{i}(s)).
\end{displaymath}
A typical example is $\alpha_{i}(s)=a_{i}s$, $a_{i}\in(0,1)$ with $\kappa=max\{a_{1},\ldots,a_{N}\}$.

Therefore, the contractions $\alpha_{i}(s)$ have the same fixed-point $a$ with $\kappa\in(0,1)$, where for the case $\alpha_{i}(s)=a_{i}s$ is $a=0$. In that case, it is readily seen from the form (and the interpretation) of $H(s)$ that $H(0)\to P^{T}$, i.e., the transpose of the one-step transition probability matrix of the background Markov chain. In our general framework, $H(a)$ plays the
role of a transition probability matrix (among the states of the background Markov chain) at the fixed point. Note also that for $i_1+\ldots+i_N=n$, where $n$ is sufficiently large, we have that $\alpha_{i_{1},\ldots,i_{N}}(s)$ is close to $a$. 

Let us consider the general case, where the contractions $\alpha_{i}(s)$ have the same fixed-point $a$, but not necessarily equal to 0. Iterating $n$ times \eqref{opq} we obtain
\begin{equation}
    \tilde{Z}(s)=\sum_{k=0}^{n}\sum_{i_{1}+\ldots+i_{N}=k}F_{i_{1},\ldots,i_{N}}(s)\tilde{V}(\alpha_{i_{1},\ldots,i_{N}}(s))+\sum_{i_{1}+\ldots+i_{N}=n+1}F_{i_{1},\ldots,i_{N}}(s)\tilde{Z}(\alpha_{i_{1},\ldots,i_{N}}(s)),
   \label{itert}
\end{equation}
where $\alpha_{i_{1},\ldots,i_{N}}(s)=\alpha_{1}^{i_{1}}(\alpha_{2}^{i_{2}}(\ldots(\alpha_{N}^{i_{N}}(s))\ldots))$ and $\alpha_{i}^{n}(s)$ denotes the $n$th iterate of $\alpha_{i}(s)$, with $\alpha_{0,\ldots,0}(s)=s$, and the functions $F_{i_{1},\ldots,i_{N}}(s)$ are recursively defined by
\begin{equation}
    F_{i_{1},\ldots,i_{N}}(s)=\sum_{k=1}^{N}F_{i_{1},\ldots,i_{k}-1,\ldots,i_{N}}(s)H(\alpha_{i_{1},\ldots,i_{k}-1,\ldots,i_{N}}(s))\tilde{P}^{(k)},\label{fas}
\end{equation}
with $F_{0,\ldots,0}(s):=I$, and $F_{i_{1},\ldots,i_{N}}(s)=O$ (i.e., the zero matrix) if one of the indices equals $-1$.

\begin{theorem}\label{th1}
Assume that $\mathbb{E}(W)<\infty$, and that $\tilde{V}(s)$ is analytic in a neighborhood of $a$, with $\tilde{V}(a)=\tilde{0}$. Then, the vector $\tilde{Z}(s)$ that satisfies \eqref{opq} is given by
\begin{equation}
\tilde{Z}(s)
=\sum_{k=0}^{\infty}\sum_{i_{1}+\ldots+i_{N}=k}
F_{i_{1},\ldots,i_{N}}(s)\,\tilde{V}(\alpha_{i_{1},\ldots,i_{N}}(s))
+\lim_{n\to\infty}\sum_{i_{1}+\ldots+i_{N}=n+1}
F_{i_{1},\ldots,i_{N}}(s)\,\tilde{Z}(a),\label{solution}
\end{equation}
where the series converges absolutely and the limit exists, provided that
\begin{displaymath}
    \|H(a)\|_1\kappa < 1,
\end{displaymath}
where $\|.\|_{1}$ denotes the matrix $1$-norm (the maximum absolute column sum of the matrix).
\end{theorem}

\begin{proof}
For every $n$, we rewrite \eqref{itert} as follows:
\begin{equation}
    \begin{array}{rl}
       \tilde{Z}(s)
=&\sum_{k=0}^{n}\sum_{i_{1}+\ldots+i_{N}=k}
F_{i_{1},\ldots,i_{N}}(s)\,\tilde{V}(\alpha_{i_{1},\ldots,i_{N}}(s)) \vspace{2mm}\\
&+\sum_{i_{1}+\ldots+i_{N}=n}
F_{i_{1},\ldots,i_{N}}(s)\,\tilde{Z}(a)+\sum_{i_{1}+\ldots+i_{N}=n+1}
F_{i_{1},\ldots,i_{N}}(s)\big(\tilde{Z}(\alpha_{i_{1},\ldots,i_{N}}(s))-\tilde{Z}(a)\big).
    \end{array}\label{decomp} 
\end{equation}
The proof proceeds by controlling each term in \eqref{decomp}
and then taking the limit as $n\to\infty$. In particular, we have to a) show the absolute convergence of the series in the first term of the right-hand side of \eqref{decomp}, b) show the convergence of the second term of the right-hand side of \eqref{decomp}, and c) show that the last term on the right-hand side of \eqref{decomp} vanishes.

Before proceeding with the proof, let us discuss the above requirements. The convergence of the series requires the bounding of the matrices $F$ that are recursively defined in \eqref{fas}, thus we have to use the contraction mapping property, the continuity of $H$, as well as the projection property of the matrices $\tilde{P}^{(i)}$, which is crucial in order to avoid the multiplicative growth of the terms. The analyticity of $\tilde{V}(s)$ close to $a$ plays also a vital role. To prove that the last term on the right-hand side of \eqref{decomp} vanishes we have to control $\tilde{Z}(\alpha_{i_{1},\ldots,i_{N}}(s))-\tilde{Z}(a)$. So it is important to exploit the contraction mapping along with the Lipschitz continuity of $\tilde{Z}(s)$. The bounding of $F$, along with the projection property of $\tilde{P}^{(i)}$ also plays a vital role. Finally, the existence of the limit as $n\to\infty$ of the second term requires the control $H(\alpha_{i_{1},\ldots,i_{N}}(s))-H(a)$ and the bounding of $F$. Again, the projection property of $\tilde{P}^{(i)}$ simplifies the analysis. We are now ready to proceed with a step-by-step proof.

\medskip\noindent
\textbf{Step 1: Lipschitz continuity of $\tilde{Z}(s)$.}

Since $z_j(s)=\mathbb{E}(e^{-sW}1_{\{Y=j\}})$, $j\in E$, and $W\geq 0$, $\mathbb{E}(W)<\infty$ we have
\begin{displaymath}
    |z_j(s)-z_j(t)|\leq \mathbb{E}(|e^{-sW}-e^{-tW}|1_{\{Y=j\}})
\leq |s-t|\mathbb{E}(W1_{\{Y=j\}}).
\end{displaymath}
Hence $\tilde{Z}(s)$ is Lipschitz continuous, and there exists $C^{\prime}>0$ such that
\begin{displaymath}
    \|\tilde{Z}(s)-\tilde{Z}(a)\|_\infty=\max_{j\in E}|z_j(s)-z_j(a)| \leq (\max_{j\in E}\mathbb{E}(W1_{\{Y=j\}})) |s-a|=C^{\prime}|s-a|,
\end{displaymath}
with $C^{\prime}:=\max_{j\in E}\mathbb{E}(W1_{\{Y=j\}})$ ($\|.\|_{\infty}$ stands for vector $\infty$-norm). Since $\alpha_{i}(s)$ are contraction mappings, there exists $\kappa\in(0,1)$ such that for $i_1+\ldots+i_N=n$, we have $|\alpha_{i_{1},\ldots,i_{N}}(s)-a| \le \kappa^n |s-a|$. Therefore, we have
\begin{displaymath}
    \|\tilde{Z}(\alpha_{i_{1},\ldots,i_{N}}(s))-\tilde{Z}(a)\|_{\infty}
\leq C^{\prime} \kappa^n |s-a|.
\end{displaymath}

\medskip

\noindent
\textbf{Step 2: Bound on $F_{i_{1},\ldots,i_{N}}(s)$.}

Since $\tilde{P}^{(i)}=e_i e_i^T$, $i=1,\ldots,N$ (where $e_{i}$ denotes the $N\times 1$ vector with the $i$th element equal to 1, and the remaining elements equal to 0), each matrix $F_{i_{1},\ldots,i_{N}}(s)$ has at most one nonzero column (Due to $\tilde{P}^{(i)}=e_i e_i^T$, each matrix 
$F_{i_{1},\ldots,i_{N}}(s)$ has at most one nonzero column. 
Hence, when summing over all multi-indices, no combinatorial growth appears). Moreover, by setting $s=a$ in \eqref{opq} we have
\begin{displaymath}
    \tilde{Z}(a)=H(a)\tilde{Z}(a),
\end{displaymath}
and thus, $1$ is an eigenvalue of $H(a)$ and $\tilde{Z}(a)$ is a corresponding right eigenvector. Moreover, the continuity of $H(s)$ at $s=a$, implies that for every $\epsilon>0$ there exists $\delta>0$ such that for $|u-a|<\delta$,
\begin{displaymath}
    \|H(u)-H(a)\|_1 < \epsilon.
\end{displaymath}
Hence,
\begin{displaymath}
    \|H(u)\|_1=\|(H(u)-H(a))+H(a)\|_1\leq \|H(u)-H(a)\|_1+\|H(a)\|_1 \leq \|H(a)\|_1 + \epsilon.
\end{displaymath}
Using \eqref{fas}, and since $\tilde{P}^{(k)}=e_k e_k^T$, each term $F_{i_{1},\ldots,i_{k}-1,\ldots,i_{N}}(s)H(\alpha_{i_{1},\ldots,i_{k}-1,\ldots,i_{N}}(s))\,\tilde{P}^{(k)}$ has nonzero entries only in column $k$. Therefore, when taking the matrix $1$-norm,
\begin{displaymath}
    \begin{array}{rl}
        \|F_{i_{1},\ldots,i_{N}}(s)\|_1
= &\max_{1\leq k\le N}
\|F_{i_{1},\ldots,i_{k}-1,\ldots,i_{N}}(s)H(\alpha_{i_{1},\ldots,i_{k}-1,\ldots,i_{N}}(s))\tilde{P}^{(k)}\|_1  \vspace{2mm}\\
         \leq & \max_{1\le k\le N}
\|F_{i_{1},\ldots,i_{k}-1,\ldots,i_{N}}(s)\|_1 \|H(\alpha_{i_{1},\ldots,i_{k}-1,\ldots,i_{N}}(s))\|_1 
    \end{array}
\end{displaymath}
For $i_{1}+\ldots+i_{N}=n$ we repeatedly apply this bound, so that: 
\begin{displaymath}
    \|F_{i_{1},\ldots,i_{k}-1,\ldots,i_{N}}(s)\|_1\leq \max_{1\leq l\leq N}\|F_{i_{1},\ldots,i_{k}-1,\ldots,i_{l}-1,\ldots,i_{N}}(s)\|_1 \|H(\alpha_{i_{1},\ldots,i_{k}-1,\ldots,i_{l}-1,\ldots,i_{N}}(s))\|_1.
\end{displaymath}
By repeatedly applying the above bound and using the fact that 
$\alpha_{i_{1},\ldots,i_{N}}(s)\to a$, together with the continuity of $H(\cdot)$, we obtain for sufficiently large $n$ that
\begin{displaymath}
    \|F_{i_{1},\ldots,i_{N}}(s)\|_1\leq \prod_{m=1}^{n}(\|H(a)\|_1+\epsilon)=(\|H(a)\|_1+\epsilon)^n.
\end{displaymath}

\medskip

\noindent
\textbf{Step 3: Vanishing of the third term in \eqref{decomp}}

For $i_{1}+\ldots+i_{N}=n+1$, and using the contraction mapping, we have already shown in Step 1 that 
\begin{displaymath}
    \|\tilde{Z}(\alpha_{i_{1},\ldots,i_{N}}(s))-\tilde{Z}(a)\|_\infty
\leq C^{\prime} \kappa^{n+1} |s-a|.
\end{displaymath}
Combining with Steps 1 and 2,
\begin{displaymath}
    \|F_{i_{1},\ldots,i_{N}}(s)[\tilde{Z}(\alpha_{i_{1},\ldots,i_{N}}(s))-\tilde{Z}(a)]\|_\infty
\leq (\|H(a)\|_1+\epsilon)^{n+1} C^{\prime} \kappa^{n+1} |s-a|.
\end{displaymath}
Let
\begin{displaymath}
    D_n(s)
=\sum_{i_{1}+\ldots+i_{N}=n+1}
F_{i_{1},\ldots,i_{N}}(s)\big(\tilde{Z}(\alpha_{i_{1},\ldots,i_{N}}(s))-\tilde{Z}(a)\big).
\end{displaymath}
Since each matrix $F_{i_{1},\ldots,i_{N}}(s)$ has at most one nonzero column, the vector
$F_{i_{1},\ldots,i_{N}}(s)(\tilde{Z}(\alpha_{i_{1},\ldots,i_{N}}(s))-\tilde{Z}(a))$ contributes only to a single coordinate (due to the projection property of $\tilde{P}^{(k)}$).
Hence, when taking the $\infty$-norm,
\begin{displaymath}
    \|D_n(s)\|_\infty
\le \max_{i_1+\ldots+i_N=n+1}
\|F_{i_{1},\ldots,i_{N}}(s)(\tilde{Z}(\alpha_{i_{1},\ldots,i_{N}}(s))-\tilde{Z}(a))\|_\infty.
\end{displaymath}
Using the previous bounds,
\begin{displaymath}
    \|D_n(s)\|_{\infty}
\leq C^{\prime} |s-a|((\|H(a)\|_1+\epsilon)\kappa)^{n+1}.
\end{displaymath}
Since $\|H(a)\|_1\kappa<1$, there exists $\epsilon>0$ such that $(\|H(a)\|_1+\epsilon)\kappa<1$. Thus, it follows that $D_n(s)\to 0$ as $n\to\infty$.

\medskip

\noindent
\textbf{Step 4: Convergence of the series.}

Our aim is to show that the series
\begin{displaymath}
    \sum_{k=0}^{\infty}\sum_{i_{1}+\ldots+i_{N}=k}
F_{i_{1},\ldots,i_{N}}(s)\,\tilde{V}(\alpha_{i_{1},\ldots,i_{N}}(s)),
\end{displaymath}
converges absolutely. Fix $k$ and consider all multi-indices such that $i_1+\ldots+i_N=k$. Then, we have
\begin{displaymath}
    \|F_{i_{1},\ldots,i_{N}}(s)\tilde{V}(\alpha_{i_{1},\ldots,i_{N}}(s))\|_\infty
\leq \|F_{i_{1},\ldots,i_{N}}(s)\|_1 \|\tilde{V}(\alpha_{i_{1},\ldots,i_{N}}(s))\|_\infty.
\end{displaymath}
Having in mind a) from Step 2 that $\|F_{i_{1},\ldots,i_{N}}(s)\|_1 \le (\|H(a)\|_1+\epsilon)^k$, b) the fact that since $\tilde{V}(a)=0$ and $\tilde{V}$ is analytic at $a$, there exists $M>0$ such that $\|\tilde{V}(u)\|_\infty \le M |u-a|$, and c) the contraction assumption, $|\alpha_{i_{1},\ldots,i_{N}}(s)-a| \le \kappa^k |s-a|$, which implies that $\|\tilde{V}(a_{i_{1},\ldots,i_{N}}(s))\|_\infty \leq M \kappa^k |s-a|$, we finally have
\begin{displaymath}
    \|F_{i_{1},\ldots,i_{N}}(s)\tilde{V}(\alpha_{i_{1},\ldots,i_{N}}(s))\|_\infty
\le M (\|H(a)\|_1+\epsilon)^k \kappa^k |s-a|.
\end{displaymath}

Now, since each matrix $F_{i_{1},\ldots,i_{N}}(s)$ has at most one nonzero column, the sum over all multi-indices with $i_1+\ldots+i_N=k$ does not accumulate over all indices when taking the $\infty$-norm. Instead, the norm is controlled by the maximal column contribution (due to the projection structure $\tilde{P}^{(i)}=e_i e_i^T$), and therefore by summing over all multi-indices with $i_1+\ldots+i_N=k$, the $\infty$-norm of the sum is bounded. That is, there exists a constant $\tilde{C}>0$ such that
\begin{displaymath}
    \left\|
\sum_{i_1+\ldots+i_N=k}
F_{i_{1},\ldots,i_{N}}(s)\,\tilde{V}(\alpha_{i_{1},\ldots,i_{N}}(s))
\right\|_\infty 
\le \tilde{C} \max_{i_1+\cdots+i_N=k}
\|F_{i_{1},\ldots,i_{N}}(s)\,\tilde{V}(\alpha_{i_{1},\ldots,i_{N}}(s))\|_\infty
\le \tilde{C} M ((\|H(a)\|_1+\epsilon)\kappa)^k |s-a|.
\end{displaymath}
Since $\|H(a)\|_1\kappa<1$, we can find $\epsilon>0$ such that $(\|H(a)\|_1+\epsilon)\kappa<1$, and thus we have
\begin{displaymath}
    \sum_{k=0}^\infty
\left\|
\sum_{i_1+\ldots+i_N=k}
F_{i_{1},\ldots,i_{N}}(s)\,\tilde{V}(\alpha_{i_{1},\ldots,i_{N}}(s))
\right\|_\infty
< \infty.
\end{displaymath}
Therefore, the series converges absolutely.
\medskip

\noindent
\textbf{Step 5: Existence of the limit term.}

Let
\begin{displaymath}
    S_n := \sum_{i_1+\ldots+i_N=n+1}
F_{i_1,\ldots,i_N}(s)\tilde{Z}(a).
\end{displaymath}
We show that $(S_n)$ is a Cauchy sequence in the vector $\infty$-norm, and since $S_{n}\in \mathbb{C}^{N}$, equipped with vector $\infty$-norm is a finite-dimensional normed space, it implies that converges.

Using the recursive structure of $F_{i_{1},\ldots,i_{N}}(s)$ given in \eqref{fas} (and the fact that $\tilde{P}^{(k)}=e_{k}e_{k}^{T}$), we may write
\begin{displaymath}
\begin{array}{rl}
    S_{n+1}=  & \sum_{i_{1}+\ldots+i_{N}=n+2}\sum_{k=1}^{N}F_{i_{1},\ldots,i_{k}-1,\ldots,i_{N}}(s)H(\alpha_{i_{1},\ldots,i_{k}-1,\ldots,i_{N}}(s))\tilde{P}^{(k)}\tilde{Z}(a)\vspace{2mm} \\
     =& \sum_{i_{1}+\ldots+i_{N}=n+2}\sum_{k=1}^{N}F_{i_{1},\ldots,i_{k}-1,\ldots,i_{N}}(s)H(\alpha_{i_{1},\ldots,i_{k}-1,\ldots,i_{N}}(s))z_{k}(a)e_{k}.
\end{array}
\end{displaymath}
Now fix $k$, and define
\begin{displaymath}
    j_{l}=\left\{\begin{array}{ll}
         i_{l},&l\neq k,  \\
         i_{k}-1,&l=k. 
    \end{array}\right.
\end{displaymath}
Then, $j_{1}+\ldots+j_{N}=n+1$, and $F_{i_{1},\ldots,i_{k}-1,\ldots,i_{N}}(s)=F_{j_{1},\ldots,j_{k},\ldots,j_{N}}(s)$. Thus,
\begin{displaymath}
\begin{array}{rl}
    S_{n+1}= &\sum_{i_{1}+\ldots+i_{N}=n+2}\sum_{k=1}^{N}F_{i_{1},\ldots,i_{k}-1,\ldots,i_{N}}(s)H(\alpha_{i_{1},\ldots,i_{k}-1,\ldots,i_{N}}(s))z_{k}(a)e_{k} \vspace{2mm}\\
    =&\sum_{j_{1}+\ldots+j_{N}=n+1}\sum_{k=1}^{N}F_{j_{1},\ldots,j_{k},\ldots,j_{N}}(s)H(\alpha_{j_{1},\ldots,j_{k},\ldots,j_{N}}(s))z_{k}(a)e_{k} \vspace{2mm}\\
    =&\sum_{j_{1}+\ldots+j_{N}=n+1}F_{j_{1},\ldots,j_{k},\ldots,j_{N}}(s)H(\alpha_{j_{1},\ldots,j_{k},\ldots,j_{N}}(s))\sum_{k=1}^{N}z_{k}(a)e_{k}\vspace{2mm}\\
    =&\sum_{j_{1}+\ldots+j_{N}=n+1}F_{j_{1},\ldots,j_{k},\ldots,j_{N}}(s)H(\alpha_{j_{1},\ldots,j_{k},\ldots,j_{N}}(s))\tilde{Z}(a),
\end{array}
\end{displaymath}
since $\sum_{k=1}^{N}z_{k}(a)e_{k}=\tilde{Z}(a)$. Having in mind that $H(a)\tilde{Z}(a)=\tilde{Z}(a)$, it follows that
\begin{displaymath}
    S_{n+1}-S_n
= \sum_{i_1+\ldots+i_N=n+1}
F_{i_1,\ldots,i_N}(s)\,
\big(H(\alpha_{i_1,\ldots,i_N}(s))-H(a)\big)\tilde{Z}(a).
\end{displaymath}
By taking norms we obtain
\begin{displaymath}
    \|S_{n+1}-S_n\|_\infty
\le \max_{i_1+\ldots+i_N=n+1}
\|F_{i_1,\ldots,i_N}(s)\|_1
\|H(\alpha_{i_1,\ldots,i_N}(s))-H(a)\|_1
\|\tilde{Z}(a)\|_\infty.
\end{displaymath}

Since $H(s)$ is analytic, it is locally Lipschitz continuous in a neighborhood of $a$. Hence, there exists a constant $L>0$ such that for $u$ sufficiently close to $a$, $\|H(u)-H(a)\|_1 \le L |u-a|$. Applying this with $u=\alpha_{i_{1},\ldots,i_{N}}(s)$ and using the contraction property, we obtain
\begin{displaymath}
\|H(\alpha_{i_{1},\ldots,i_{N}}(s))-H(a)\|_1
\le L |\alpha_{i_{1},\ldots,i_{N}}(s)-a|
\le L |s-a| \kappa^{n+1}.
\end{displaymath}
Moreover, from Step 2 and for $i_1+\ldots +i_{N}=n+1$, we have that
\begin{displaymath}
\|F_{i_1,\ldots,i_N}(s)\|_1 \leq (\|H(a)\|_1+\epsilon)^{n+1}.
\end{displaymath}
Combine the above, we have
\begin{displaymath}
    \|S_{n+1}-S_n\|_\infty
\le L|s-a|((\|H(a)\|_1+\epsilon)\kappa)^{n+1}\|\tilde{Z}(a)\|_\infty.
\end{displaymath}
Since $(\|H(a)\|_1+\epsilon)\kappa<1$, we obtain
\begin{displaymath}
    \sum_{n=0}^\infty \|S_{n+1}-S_n\|_\infty < \infty.
\end{displaymath}
Therefore, $(S_n)$ is a Cauchy sequence, and since $(\mathbb{C}^{N}, \|\cdot\|_\infty)$ is a complete space (since it is a finite-dimensional normed space), the limit of $S_{n}$ as $n\to\infty$ exists.
\end{proof}

In the following sections, we present several models, motivated by queueing related applications, where Theorem \ref{th1} applies. 
\subsection{The transient case}
We now focus on the investigation of the transient behavior, i.e., the solution approach for \eqref{basic0}. Clearly, $Re(\eta)\geq 0$, and $|r|<1$. In particular, assuming $|r|<1$ simplifies the analysis, since incorporate geometric convergence. As before, assume that $\alpha_{i}(s)$ are commutative contraction mappings with a common fixed-point $a$. Now, we are interested in deriving the vector $\tilde{Z}(r,s,\eta)$, whose components are $z_j(r,s,\eta)
= \sum_{n=1}^{\infty} r^n \mathbb{E}(e^{-sW_n-\eta T_n}1_{\{Y_n=j\}})$, $j\in E$. 

Iterating \eqref{basic0} $n$ times yields
\begin{equation}
\begin{array}{rl}
    \tilde{Z}(r,s,\eta)=&\sum_{k=0}^{n}\sum_{i_{1}+\ldots+i_{N}=k}F_{i_{1},\ldots,i_{M}}(r,s,\eta)K(r,\alpha_{i_{1},\ldots,i_{N}}(s),\eta)\vspace{2mm}\\&+\sum_{i_{1}+\ldots+i_{N}=n+1}F_{i_{1},\ldots,i_{N}}(r,s,\eta)\tilde{Z}(r,\alpha_{i_{1},\ldots,i_{N}}(s),\eta),
    \end{array}\label{iternm}
\end{equation}
where the functions $F_{i_{1},\ldots,i_{N}}(r,s,\eta)$ are recursively defined by
\begin{equation}
    F_{i_{1},\ldots,i_{N}}(r,s,\eta)=r\sum_{k=1}^{N}F_{i_{1},\ldots,i_{k}-1,\ldots,i_{N}}(r,s,\eta)U(\alpha_{i_{1},\ldots,i_{k}-1,\ldots,i_{N}}(s),\eta)\tilde{P}^{(k)},\label{tranf}
\end{equation}
with $F_{0,\ldots,0}(r,s,\eta):=I$, and $F_{i_{1},\ldots,i_{N}}(r,s,\eta)=O$ (that is, the zero matrix) if one of the indices is equal to $-1$. 

\begin{theorem}\label{tranth}
Assume that $\mathbb{E}(W_n)<\infty$, $|r|<1$, $Re(s)\ge 0$, $Re(\eta)\ge 0$, $\tilde{Z}(r,s,\eta)$ satisfies \eqref{basic0} and
\begin{itemize}
\item[(i)] $U(s,\eta)$ is analytic in a neighborhood of $s=a$, and $\|U(a,\eta)\|_1 \leq M_U(\eta),$ for some $M_{U}(\eta)>0$.
\item[(ii)] $K(r,s,\eta)$ is analytic in a neighborhood of $s=a$.
\item[(iii)] $|r|M_U(\eta) < 1$.
\end{itemize}
Then
\begin{equation}
\tilde{Z}(r,s,\eta)
= \sum_{k=0}^{\infty}\sum_{i_1+\ldots+i_N=k}
F_{i_1,\ldots,i_N}(r,s,\eta)\,K(r,\alpha_{i_1,\ldots,i_N}(s),\eta),\label{transientsol}
\end{equation}
and the series converges absolutely.
\end{theorem}

\begin{proof}
The proof has a similar structure with the one given in Theorem \ref{th1} and is composed to the following steps: Iterating \eqref{basic0} $n$ times yields
\begin{equation}
\begin{array}{rl}
\tilde{Z}(r,s,\eta)
=&\sum_{k=0}^{n}\sum_{i_{1}+\ldots+i_{N}=k}
F_{i_{1},\ldots,i_{N}}(r,s,\eta)K(r,\alpha_{i_{1},\ldots,i_{N}}(s),\eta)\vspace{2mm}\\
&+\sum_{i_{1}+\ldots+i_{N}=n}
F_{i_{1},\ldots,i_{N}}(r,s,\eta)\tilde{Z}(r,a,\eta)\vspace{2mm}\\
&+\sum_{i_{1}+\ldots+i_{N}=n+1}
F_{i_{1},\ldots,i_{N}}(r,s,\eta)
\big(\tilde{Z}(r,\alpha_{i_{1},\ldots,i_{N}}(s),\eta)-\tilde{Z}(r,a,\eta)\big).
\end{array}
\label{iter_final}
\end{equation}

\medskip

\noindent
\textbf{Step 1: Lipschitz continuity.}

Since $\mathbb{E}(W_n)<\infty$, we have
\begin{displaymath}
    |z_j(r,s,\eta)-z_j(r,t,\eta)|
\leq \sum_{n=1}^{\infty}|r|^n\mathbb{E}(|e^{-sW_{n}}-e^{-tW_{n}}|e^{-Re(\eta)T_{n}}1_{\{Y_{n}=j\}}) \leq |s-t|\sum_{n=1}^{\infty}|r|^n \mathbb{E}(W_n 1_{\{Y_n=j\}}).
\end{displaymath}
since $|e^{-sW_{n}}-e^{-tW_{n}}|\leq |s-t|W_{n}$. Thus, there exists $C_{1}:=\max_{1\leq j\le N} \sum_{n=1}^\infty |r|^n \mathbb{E}(W_n 1_{\{Y_n=j\}}) >0$ such that
\begin{displaymath}
    \|\tilde{Z}(r,s,\eta)-\tilde{Z}(r,a,\eta)\|_\infty
\leq C_{1} |s-a|.
\end{displaymath}

\medskip

\noindent
\textbf{Step 2: Bound on $F_{i_{1},\ldots,i_{N}}(r,s,\eta)$.}

From the \eqref{tranf}, and since $\|\tilde{P}^{(k)}\|_{1}=1$, we obtain
\begin{displaymath}
\|F_{i_{1},\ldots,i_{N}}(r,s,\eta)\|_1
\le |r| \sum_{k=1}^{N}
\|F_{i_{1},\ldots,i_{k}-1,\ldots,i_{N}}(r,s,\eta)\|_1 \,
\|U(\alpha_{i_{1},\ldots,i_{k}-1,\ldots,i_{N}}(s),\eta)\|_1.
\end{displaymath}
Since $\tilde{P}^{(k)}=e_k e_k^T$, we have for any $j\in E=\{1,\ldots,N\}$,
\begin{displaymath}
    \tilde{P}^{(k)} e_j =
\begin{cases}
e_k, & k=j,\\
0, & k\neq j.
\end{cases}
\end{displaymath}
Therefore, when applying the recursion to the $j$-th column, only the term corresponding to $k=j$ contributes. More precisely,
\[
F_{i_{1},\ldots,i_{N}}(r,s,\eta)e_j
= r\,F_{i_{1},\ldots,i_{j}-1,\ldots,i_{N}}(r,s,\eta)\,
U(\alpha_{i_{1},\ldots,i_{j}-1,\ldots,i_{N}}(s),\eta)\,e_j.
\]

Thus, each column evolves independently through a single recursive branch, and no accumulation over $k$ occurs. Consequently, taking the maximum over the columns yields
\[
\|F_{i_{1},\ldots,i_{N}}(r,s,\eta)\|_1
\le |r| \max_{1\le k\le N}
\left(
\|F_{i_{1},\ldots,i_{k}-1,\ldots,i_{N}}(r,s,\eta)\|_1 \,
\|U(\alpha_{i_{1},\ldots,i_{k}-1,\ldots,i_{N}}(s),\eta)\|_1
\right).
\]
Let $i_{1}+\ldots+i_{N}=n$ with $n\to\infty$, and since $U(s,\eta)$ is continuous at $s=a$, by using similar arguments as in Theorem \ref{th1} we have that $\|U(s,\eta)\|_{1}\leq\|U(a,\eta)\|_{1}+\epsilon:=M_{U}(\eta)+\epsilon$ for $\epsilon>0$. Then, by applying the previous inequality recursively $n$ times, we obtain
\begin{displaymath}
    \|F_{i_{1},\ldots,i_{N}}(r,s,\eta)\|_1
\le |r|^n \prod_{m=1}^{n} (M_{U}(\eta)+\epsilon)=|r|^{n}(M_{U}(\eta)+\epsilon)^{n}.
\end{displaymath}
\medskip

\noindent
\textbf{Step 3: Vanishing of the third term in \eqref{iter_final}.}

Define
\begin{displaymath}
    D_n(r,s,\eta)
=\sum_{i_{1}+\ldots+i_{N}=n+1}
F_{i_{1},\ldots,i_{N}}(r,s,\eta)
\big(\tilde{Z}(r,\alpha_{i_{1},\ldots,i_{N}}(s),\eta)-\tilde{Z}(r,a,\eta)\big).
\end{displaymath}
Then
\begin{displaymath}
    \|D_n(r,s,\eta)\|_\infty
\le \max_{i_{1}+\ldots+i_{N}=n+1}
\|F_{i_{1},\ldots,i_{N}}(r,s,\eta)\|_1
\|\tilde{Z}(r,\alpha_{i_{1},\ldots,i_{N}}(s),\eta)-\tilde{Z}(r,a,\eta)\|_\infty.
\end{displaymath}
Moreover, due to the contraction argument and the Lipschitz continuity of $\tilde{Z}(r,s,\eta)$ we have
 \begin{displaymath}
      \|\tilde{Z}(r,\alpha_{i_{1},\ldots,i_{N}}(s),\eta)-\tilde{Z}(r,a,\eta)\|_\infty
\leq C_{1} \kappa^{n+1}|s-a|.
 \end{displaymath}
 Therefore, using Step 2
\begin{displaymath}
    \|D_n(r,s,\eta)\|_\infty
\le C_{1} (|r|(M_U(\eta)+\epsilon)\kappa)^{n+1} |s-a|.
\end{displaymath}
Since $|r|M_U(\eta) < 1$ and $\kappa<1$, there exists an $\epsilon>0$ such that $|r|(M_U(\eta)+\epsilon)\kappa < 1$. Thus, $D_n(r,s,\eta)\to 0$ as $n\to\infty$.

\medskip

\noindent
\textbf{Step 4: Vanishing of the limit term in \eqref{iter_final}.}

Let
\begin{displaymath}
    S_n := \sum_{i_{1}+\ldots+i_{N}=n+1}
F_{i_{1},\ldots,i_{N}}(r,s,\eta)\tilde{Z}(r,a,\eta).
\end{displaymath}
Then
\begin{displaymath}
    \|S_n\|_\infty
\le 
\left\|
\sum_{i_{1}+\ldots+i_{N}=n+1} F_{i_{1},\ldots,i_{N}}(r,s,\eta)
\right\|_1
\|\tilde{Z}(r,a,\eta)\|_\infty.
\end{displaymath}

As in the stationary case, due to the projection structure $\tilde{P}^{(k)}=e_k e_k^T$, each matrix 
$F_{i_{1},\ldots,i_{N}}(r,s,\eta)$ has at most one nonzero column, and each column evolves independently through a unique recursive path. Thus, when summing over all multi-indices with $i_1+\ldots+i_N=n+1$, no combinatorial growth occurs. Thus, there exists a constant $C_{2}>0$ such that
\begin{displaymath}
\left\|
\sum_{i_{1}+\ldots+i_{N}=n+1} F_{i_{1},\ldots,i_{N}}(r,s,\eta)
\right\|_1
\le C_{2} \max_{i_{1}+\ldots+i_{N}=n+1}
\|F_{i_{1},\ldots,i_{N}}(r,s,\eta)\|_1.
\end{displaymath}

Using the bound from Step 2, we have $\|F_{i_{1},\ldots,i_{N}}(r,s,\eta)\|_1
\le (|r|(M_U(\eta)+\epsilon))^{n+1}$, and thus
\begin{displaymath}
\|S_n\|_\infty
\le C_2 \|\tilde{Z}(r,a,\eta)\|_\infty (|r|(M_U(\eta)+\epsilon))^{n+1}.
\end{displaymath}
Since $|r|M_U(\eta)<1$, there exists $\epsilon>0$ such that 
$|r|(M_U(\eta)+\epsilon)<1$, and therefore $S_n \to 0$ as $n\to\infty$.

\medskip

\noindent
\textbf{Step 5: Convergence of the series.}

We prove that the series
\begin{displaymath}
    \sum_{k=0}^{\infty}\sum_{i_{1}+\ldots+i_{N}=k}
F_{i_{1},\ldots,i_{N}}(r,s,\eta)K(r,\alpha_{i_{1},\ldots,i_{N}}(s),\eta),
\end{displaymath}
converges absolutely in the vector $\infty$-norm. Since $K(r,s,\eta)$ is analytic at $s=a$, it is locally bounded. Thus, there exists a $M_K>0$ such that $\|K(r,s,\eta)\|_\infty \le M_K$.

Hence,
\begin{displaymath}
    \|F_{i_1,\ldots,i_N}(r,s,\eta)K(r,\alpha_{i_1,\ldots,i_N}(s),\eta)\|_\infty
\le M_K (|r|(M_U(\eta)+\epsilon))^k.
\end{displaymath}
Using the projection structure, the sum over all multi-indices with 
$i_1+\ldots+i_N=k$ does not cause combinatorial growth, and therefore
\begin{displaymath}
    \left\|
\sum_{i_1+\ldots+i_N=k}
F_{i_1,\ldots,i_N}(r,s,\eta)K(r,\alpha_{i_1,\ldots,i_N}(s),\eta)
\right\|_\infty
\le M_K (|r|(M_U(\eta)+\epsilon))^k.
\end{displaymath}
Since $|r|M_U(\eta)<1$, there exists $\epsilon>0$ such that $|r|(M_U(\eta)+\epsilon)<1$, and thus, the series converges absolutely.

Having the above in mind, by taking the limit in \eqref{iter_final} as $n\to\infty$ we obtain \eqref{transientsol}.
\end{proof}

\subsection{An extension}
In the following, we cope with \eqref{masip}, which is a slightly more general version compared with \eqref{opq}, since now we have two sums, and the focus is of $\tilde{Z}(s,t) = (z_1(s,t),\ldots,z_N(s,t))^T$, where $z_i(s,t)=\mathbb{E}\big(e^{-sW_1 - tW_2}1_{\{Y=i\}}\big)$, thus, they are bounded and 
analytic for $Re(s)>0$, $Re(t)> 0$, and continuous on $\mathbb{C}_+\times\mathbb{C}_{+}$. Again, assume that the commutative contractions $\alpha_{i}(s)$, $i=1,\ldots,N$ have a common fixed-point $a$, thus there exists  $\kappa\in(0,1)$ such that $|\alpha_i(t)-a| \leq \kappa |t-a|$, and $\alpha_i(\alpha_j(s))=\alpha_j(\alpha_i(s))$.
\begin{theorem}\label{th-masip-full}
Consider the functional equation \eqref{masip} and assume that the matrices $K(s,t)$, $R(s,t)$ and $T(s)$ are analytic in a neighborhood of $(a,a)$. Define, for $k\geq 0$, the index set
\begin{displaymath}
    \Gamma_k := \{ \gamma = ((i_1,\varepsilon_1),\ldots,(i_k,\varepsilon_k)) 
: i_m\in\{1,\ldots,N\},\; \varepsilon_m\in\{R,T\},m=1,\ldots,k\},
\end{displaymath}
with $\Gamma_0=\{\emptyset\}$. Here, $\varepsilon_m\in\{R,T\}$ is a label indicating the type of recursion 
applied at step $m$: $\varepsilon_m=R$ corresponds to the term involving 
$R(s,t)\tilde{Z}(t,\alpha_i(t))$, while $\varepsilon_m=T$ corresponds to the 
term involving $T(s)\tilde{Z}(s,\alpha_i(t))$. Note that each $\gamma\in\Gamma_{k}$ corresponds to a path of length $k$ in the 
iteration tree generated by \eqref{masip}. For $\gamma\in\Gamma_k$, define recursively the iterated arguments: $(s_0,t_0)=(s,t)$ and for $m=1,\ldots,k$,
\begin{displaymath}
    (s_m,t_m)=
\begin{cases}
(t_{m-1},\, \alpha_{i_m}(t_{m-1})), & \varepsilon_m=R,\\[2mm]
(s_{m-1},\, \alpha_{i_m}(t_{m-1})), & \varepsilon_m=T.
\end{cases}
\end{displaymath}
Moreover, define the operator product
\[
\mathcal{K}_\gamma(s,t)
:= 
\begin{cases}
\prod_{m=1}^{k}R(s_{m-1},t_{m-1})\,\tilde{P}^{(i_m)}, & \varepsilon_m=R,\\
\prod_{m=1}^{k}T(s_{m-1})\,\tilde{P}^{(i_m)}, & \varepsilon_m=T,
\end{cases}
\]
with $\mathcal{K}_\emptyset=I$. Note that each matrix $\mathcal{K}_\gamma(s,t)$ has at most one nonzero column (This follows from the fact that $\tilde{P}^{(i)}=e_i e_i^T$, so that for any matrix $A$,
$A\tilde{P}^{(i)}=A e_i e_i^T$ has nonzero entries only in column $i$, as in Theorem \ref{th1}). 
By induction on the length of $\gamma$, it follows that $\mathcal{K}_\gamma(s,t)$ 
has at most one nonzero column.

Then, for all $(s,t)$ with $Re(s)\geq 0$, $Re(t)\geq 0$, the solution of \eqref{masip} is given by
\begin{equation}
\tilde{Z}(s,t)
= \sum_{k=0}^{\infty}\sum_{\gamma\in\Gamma_k}
\mathcal{K}_\gamma(s,t)\,K(s_k,t_k)
+ \lim_{n\to\infty}\sum_{\gamma\in\Gamma_n}
\mathcal{K}_\gamma(s,t)\,\tilde{Z}(s_n,t_n),
\label{masip-solution-correct}
\end{equation}
provided that $M(a)\kappa<1$, where
\[
M(a):=\max\{\|R(a,a)\|_1,\|T(a)\|_1\}.
\]
\end{theorem}
\begin{proof}
    The proof follows similar arguments (although more complicated due to the presence of mixed recursions, i.e, recursions that involve two different transformations of the arguments, namely $(s,t)\mapsto (t,\alpha_i(t))$ and 
$(s,t)\mapsto (s,\alpha_i(t))$, which are applied at each iteration) to those given in Theorem \ref{th1}. Note that along any iteration, the second argument contracts geometrically to $a$, 
while the first argument is mapped to the second through the mixed 
recursion, and therefore also converges to $a$. 

Moreover, the matrices $R(s,t)$ and $T(s)$ are analytic and locally 
bounded, and the projection structure of $\tilde{P}^{(i)}$ prevents (as in Theorem \ref{th1}) the combinatorial 
growth of the iterates. As a result, the iterative scheme exhibits the same geometric decay and boundedness properties as in the proof of Theorem \ref{th1}. For clarity, we present the detailed proof of the theorem in Appendix \ref{appen}.
\end{proof}
\begin{remark}
    To help the reader with the notation used in Theorem \ref{th-masip-full}, denote for $n=1$,
     \begin{equation}
        \tilde{Z}(s,t)=K(s,t)+R(s,t)\sum_{i_1=1}^{N}\tilde{P}^{(i)}\tilde{Z}(t,\alpha_{i}(t))+T(s)\sum_{i_1=1}^{N}\tilde{P}^{(i)}\tilde{Z}(s,\alpha_{i}(t)),\label{masip1}
    \end{equation}
    and let $\tilde{Z}_{1}(s,t):=\tilde{Z}(s,t)$. Then, for $n=2$, simple calculations result in
    \begin{equation}
\begin{array}{rl}
\tilde{Z}_{2}(s,t)=  
& K(s,t)  +\sum_{i_1=1}^{N}R(s,t)\tilde{P}^{(i_1)}K(t,\alpha_{i_1}(t)) +\sum_{i_1=1}^{N}T(s)\tilde{P}^{(i_1)}K(s,\alpha_{i_1}(t))  \vspace{2mm}\\
& +\sum_{i_1=1}^{N}\sum_{i_2=1}^{N}
R(s,t)\tilde{P}^{(i_1)}R(t,\alpha_{i_1}(t))\tilde{P}^{(i_2)}
\tilde{Z}(\alpha_{i_1}(t),\alpha_{i_2}(\alpha_{i_1}(t)))\vspace{2mm}\\
&+\sum_{i_1=1}^{N}\sum_{i_2=1}^{N}
R(s,t)\tilde{P}^{(i_1)}T(t)\tilde{P}^{(i_2)}
\tilde{Z}(t,\alpha_{i_2}(\alpha_{i_1}(t)))\vspace{2mm}\\
& +\sum_{i_1=1}^{N}\sum_{i_2=1}^{N}
T(s)\tilde{P}^{(i_1)}R(s,\alpha_{i_1}(t))\tilde{P}^{(i_2)}
\tilde{Z}(\alpha_{i_1}(t),\alpha_{i_2}(\alpha_{i_1}(t)))\vspace{2mm}\\
&+\sum_{i_1=1}^{N}\sum_{i_2=1}^{N}
T(s)\tilde{P}^{(i_1)}T(s)\tilde{P}^{(i_2)}
\tilde{Z}(s,\alpha_{i_2}(\alpha_{i_1}(t))).
\end{array}\label{lpa}
\end{equation}

Then, \eqref{lpa} can be written in the form given in Theorem \ref{th-masip-full} as follows:
\begin{equation*}
\tilde{Z}(s,t)
= \sum_{k=0}^{1}\sum_{\gamma\in\Gamma_k}
\mathcal{K}_\gamma(s,t)\,K(s_k,t_k)
+ \sum_{\gamma\in\Gamma_2}
\mathcal{K}_\gamma(s,t)\,\tilde{Z}(s_2,t_2).
\end{equation*}

In particular, for $k=0$ we have $\Gamma_0=\{\emptyset\}$, $\mathcal{K}_\emptyset=I$, and $(s_0,t_0)=(s,t)$, so the corresponding term is $K(s,t)$. For $k=1$, $\Gamma_1=\{(i_1,\varepsilon_1): i_1=1,\ldots,N,\ \varepsilon_1\in\{R,T\}\}$, and
\begin{displaymath}
    \mathcal{K}_\gamma(s,t)=
\begin{cases}
R(s,t)\tilde{P}^{(i_1)}, & \varepsilon_1=R,\\
T(s)\tilde{P}^{(i_1)}, & \varepsilon_1=T,
\end{cases}
\end{displaymath}
with $(s_1,t_1)=(t,\alpha_{i_1}(t))$ if $\varepsilon_1=R$, and $(s_1,t_1)=(s,\alpha_{i_1}(t))$ if $\varepsilon_1=T$. For $k=2$, $\Gamma_2=\{((i_1,\varepsilon_1),(i_2,\varepsilon_2))\}$, and each term is given by
\begin{displaymath}
    \mathcal{K}_\gamma(s,t)\,\tilde{Z}(s_2,t_2),
\end{displaymath}
where $\mathcal{K}_\gamma(s,t)$ is the interleaved product of the operators $R,T$ with the projections $\tilde{P}^{(i_m)}$, and $(s_2,t_2)$ are the iterated arguments defined recursively. Note also that the four double sums in the right-hand side of \eqref{lpa}  correspond to the paths $(R,R)$, $(R,T)$, $(T,R)$ and $(T,T)$, respectively.
\end{remark}
\begin{remark}
Although the contraction mappings $\alpha_i(\cdot)$ satisfy the commutativity property
$\alpha_i(\alpha_j(s))=\alpha_j(\alpha_i(s))$ in both Theorem \ref{th1} and Theorem \ref{th-masip-full}, the structure of the iteration is somewhat different. In particular, in Theorem~\ref{th1}, the functional equation involves only compositions of the mappings $\alpha_i(\cdot)$ and multiplication by matrices depending on the contracted argument. As a result, the order in which the mappings $\alpha_i(\cdot)$ are applied does not affect the final argument, and the iteration can be indexed by multi-indices $(i_1,\ldots,i_N)$ with $i_1+\ldots+i_N=n$, which simply count how many times each mapping is used.

Contrary to that case, in the present setting the recursion involves matrix-valued operators $R(s,t)$ and $T(s)$ acting on different arguments, together with interleaved projection matrices $\tilde{P}^{(i)}$. Although the mappings $\alpha_i(\cdot)$ still commute, the operators $R$ and $T$ do not commute in general, and their action depends on the evolving pair $(s_m,t_m)$. As a result, the order of application affects both the operator product and the sequence of arguments, and the iteration cannot be reduced to counts. Thus, we must keep tracking the full sequence $\gamma\in\Gamma_k$.
\end{remark}
\begin{remark}
    The condition 
\begin{displaymath}
    \max\{\|R(a,a)\|_1,\|T(a)\|_1\}\,\kappa < 1,
\end{displaymath}
is the natural analogue of the condition $\|H(a)\|_1\kappa<1$ in 
Theorem \ref{th1}. Indeed, $\kappa$ governs the geometric contraction of the iterated arguments, while $\|R(a,a)\|_1$ and $\|T(a)\|_1$ control the growth of the multiplicative coefficients in each recursive step. Therefore, their product determines the overall decay rate of each term in the iteration. When the above condition holds, we ensure the absolute convergence of the series and the existence of the 
limit of the remainder term (as in Theorem \ref{th1}).
\end{remark}
\begin{remark}
    In subsection \ref{var2} and Theorem \ref{thvar} we cope with the solution of the functional equation \eqref{bzaa} when $A=diag(a_1,a_2)$, i.e., $T(s_1,s_2)=(a_{1}s_{1},a_{2}s_{2})$, $a_{i}\in(0,1)$, $i=1,2$, $c_{1}(0,0)=1$, $c_{2}(0,0)=c_{3}(0,0)=c_{4}(0,0)=0$. Although this is a generalized result, since it corresponds to the two-dimensional case, the proof similar to the one in Theorem \ref{th1}. For clarity, we present it in subsection \ref{var2}.
\end{remark}
%\subsection{An extension}
%In the following we cope with \eqref{masip}, which is a slightly more general version compared with \eqref{opq}, since now we have two sums, and the focus is of $\tilde{Z}(s,t) = (z_1(s,t),\ldots,z_N(s,t))^T$, where $z_i(s,t)=\mathbb{E}\big(e^{-sW_1 - tW_2}1_{\{Y=i\}}\big)$. Again, assume that the contractions $\alpha_{i}(s)$, $i=1,\ldots,N$ have a common fixed-point $a$, thus there exists  $\kappa\in(0,1)$ such that $|\alpha_i(t)-a| \leq \kappa |t-a|$.

\section{Application examples}\label{appl}
In the following subsections, we present various queueing related examples, the analysis of which result in vector-valued functional equations of the form \eqref{opq}, \eqref{basic0}. Note that the examples that refer to the transient analysis (subsections \ref{mmc}, \ref{fluid}) are more complicated since in order to result in the functional equation we need to apply some Wiener-Hopf arguments. However, this is not the case in the examples related to the stationary analysis; see subsections \ref{mdc}-\ref{inar}.
\subsection{The Markov dependent case}\label{mdc}
In the following, we cope with generalizing the work in \cite[Section 2]{dimitriou2024markov} (also \cite{adan2} in the autoregressive context), by further assuming that the autoregressive parameter depends also on the state of a background Markov chain. In such a case, we will assume that $\alpha_{i}(s)=a_{i}s$, $i=1,\ldots,N$.

Consider a FIFO single-server queue, and let $T_{n}$ $n$th arrival to the system with $T_{1}=0$. Define also $A_{n}=T_{n}-T_{n-1}$, $n=2,3,\ldots$, i.e., is the time between the $n$th and $(n-1)$th arrival. Let $S_{n}$ be the service time of the $n$th arrival, $n\geq 1$. We assume that the inter-arrival and service times are regulated by an irreducible discrete-time Markov chain $\{Y_{n}, n\geq 0\}$ with state space $E=\{1, 2,...,N\}$ and transition probability matrix $P:=(p_{i,j})_{i,j\in E}$. 
Let $\tilde{\pi}:=(\pi_{1},\ldots,\pi_{N})^{T}$ be the stationary distribution of $\{Y_{n};n\geq 0\}$. Let $W_{n}$ the workload in the system just before the $n$th customer arrival. Such an arrival adds $S_n$ work but makes obsolete a fixed fraction $1- a_{i}$ of
the work that is already present in the system, given that $Y_{n}=i\in E$. Recall that $z_{i}^{n}(s):=\mathbb{E}(e^{-sW_{n}}1_{\{Y_{n}=i\}})$, $Re(s)\geq 0$, $i=1,\ldots,N$, $n\geq 0$, and assuming the limit exists, define $z_{i}(s)=\lim_{n\to\infty}z_{i}^{n}(s)$, $i=1,\ldots,N$. Let also $\tilde{Z}(s)=(z_{1}(s),\ldots,z_{N}(s))^{T}$.

The sequences $\{A_n\}_{n\in\mathbb{N}}$ and $\{S_n\}_{n\in\mathbb{N}_{0}}$ are autocorrelated as well as cross-correlated. Assume that for $n\geq 0$, $x,y\geq 0$, $i,j=1,\ldots,N$:
\begin{equation}
    \begin{array}{l}
         P(A_{n+1}\leq x,S_{n}\leq y,Y_{n+1}=j|Y_{n}=i,A_{2},\ldots,A_{n},S_{1},\ldots,S_{n-1},Y_{1},\ldots,Y_{n-1}) \vspace{2mm}\\
        =  P(A_{n+1}\leq x,S_{n}\leq y,Y_{n+1}=j|Y_{n}=i)=B_{i}(y)p_{i,j}(1-e^{-\lambda_{j}x}):=p_{i,j}B_{i}(y)G_{A,j}(x),
    \end{array}\label{mp1}
\end{equation}
where $B_{i}(.)$, $G_{A,j}(.)$ denote the distribution functions of service and interarrival times, given $Y_{n}=i$, $Y_{n+1}=j$, respectively.
Note that $A_{n+1}$, $S_{n}$, $Y_{n+1}$ are independent of the past given $Y_{n}$, and $A_{n+1}$, $S_{n}$ are conditionally independent given $Y_{n}$, $Y_{n+1}$. Let also $B^{*}(s)=diag(\beta_{1}^{*}(s),\ldots,\beta_{N}^{*}(s))$, where $\beta_{i}^{*}(s):=\int_{0}^{\infty}e^{-sy}dB_{i}(y)$, $L(s):=diag(\frac{\lambda_{1}}{\lambda_{1}-s},\ldots,\frac{\lambda_{N}}{\lambda_{N}-s})$, $\Lambda=diag(\lambda_{1},\ldots,\lambda_{N})$.  

%The auto-correlation between $S_{m}$, $S_{m+n}$:
%{\small{\begin{displaymath}
 %   \begin{array}{c}
      %\rho(S_{m},S_{m+n})=\rho(S_{0},S_{n})=\frac{\sum_{i=1}^{N}\sum_{j=1}^{N}\pi_{i}(p_{i,j}^{(n)}-\pi_{j})\gamma_{i}\gamma_{j}}{\sum_{i=1}^{N}\pi_{i}s_{i}^{2}-(\sum_{i=1}^{N}\pi_{i}\gamma_{i})^{2}},\,n\geq 1,
    %\end{array}
%\end{displaymath}}}
%with $\gamma_{i}$, $s_{i}^{2}$ be the mean and the 2nd moment of the service time distribution, $p_{i,j}^{(n)}=P(Y_{n}=j|Y_{0}=i),\,i,j=1\geq 0,\,n\geq 0$. A similar expression for cross-correlation between $A_{n}$, $S_{n}$ also holds; see \cite{adan2}.
%\begin{remark}\label{rem6}
 %   Note that following the notation in the previous subsection, we have $A_{i,j}(s):=E(e^{-s A_{n}}1(Y_{n}=j)|Y_{n-1}=i)=p_{i,j}\frac{\lambda_{j}}{\lambda_{j}+s}$. 
 %\end{remark}
 \begin{remark}
    Moreover, an extension to the case where $A_{n+1}|Y_{n+1}=j$ is of phase-type, e.g., a mixed Erlang distribution with cdf
  \begin{displaymath}
   \begin{array}{c}
        G_{A,j}(x):=\sum_{m=1}^{M}q_{m}(1-e^{-\lambda_{j}x}\sum_{l=0}^{m-1}\frac{(\lambda_{j}x)^{l}}{l!}),\,x\geq 0,\end{array}
    \end{displaymath}
    can be handled at a cost of a more complicated expression. 
\end{remark}
    
\begin{theorem}\label{theoremm}
The transforms $Z_{j}(s)$, $j=1,\ldots,N$ satisfy the system
\begin{equation}
    z_{j}(s)=\frac{\lambda_{j}}{\lambda_{j}-s}\sum_{i=1}^{N}p_{i,j}\beta_{i}^{*}(s)z_{i}(a_{i}s)-\frac{s}{\lambda_{j}-s}v_{j},\label{op}
\end{equation}
where $v_{j}:=\sum_{i=1}^{N}p_{i,j}\beta_{i}^{*}(\lambda_{j})z_{i}(a_{i}\lambda_{j})$, $j=1,\ldots,N$. Equivalently, in matrix notation, the transform vector $\tilde{Z}(s)$ satisfies
\begin{equation}
    \tilde{Z}(s)=H(s)\sum_{i=1}^{N}\tilde{P}^{(i)}\tilde{Z}(a_{i}s)+\tilde{V}(s),
    \label{bhj}
\end{equation}
where $\tilde{P}^{(i)}:=(\tilde{P}^{(i)})_{p,q}$, $i,p,q\in E$ is an $N\times N$ matrix, with the $(i,i)$-element $\tilde{P}^{(i)}_{i,i}=1$, and all the other elements $\tilde{P}^{(i)}_{p,q}=0$, $p,q\neq i$. Note that $\sum_{i=1}^{N}\tilde{P}^{(i)}=I$. Moreover, $H(s)=L(s)P^{T}B^{*}(s)$, $\tilde{V}(s):=s(I-L(s))\tilde{v}$, $\tilde{v}:=(v_{1},\ldots,v_{N})^{T}$. 
\end{theorem}
\begin{proof}
From the recursion $W_{n+1}=[a_{i}W_{n}+S_{n}-A_{n+1}]^{+}$ (given $Y_{n}=i$) we obtain the following equation for the transforms $z_{j}^{n+1}(s)$, $j=1,\ldots,N$:
\begin{displaymath}
    \begin{array}{rl}
        z_{j}^{n+1}(s)= &\mathbb{E}(e^{-sW_{n+1}}1_{\{Y_{n+1}=j\}})=\sum_{i=1}^{N}P(Y_{n}=i)E(e^{-sW_{n+1}}1_{\{Y_{n+1}=j\}}|Y_{n}=i)  \vspace{2mm}\\
         =& \sum_{i=1}^{N}P(Y_{n}=i)\mathbb{E}(e^{-s[a_{i}W_{n}+S_{n}-A_{n+1}]^{+}}1_{\{Y_{n+1}=j\}}|Y_{n}=i)\vspace{2mm}\\
         =& \sum_{i=1}^{N}P(Y_{n}=i)p_{i,j}\mathbb{E}(e^{-s[a_{i}W_{n}+S_{n}-A_{n+1}]^{+}}|Y_{n+1}=j,Y_{n}=i)\vspace{2mm}\\
         =&\sum_{i=1}^{N}P(Y_{n}=i)p_{i,j}\left[\mathbb{E}\left(\int_{0}^{a_{i}W_{n}+S_{n}}e^{-s(a_{i}W_{n}+S_{n}-y)}\lambda_{j}e^{-\lambda_{j}y}dy|Y_{n}=i\right)+\mathbb{E}\left(\int_{a_{i}W_{n}+S_{n}}^{\infty}\lambda_{j}e^{-\lambda_{j}y}dy|Y_{n}=i\right)\right]\vspace{2mm}\\
         =&\sum_{i=1}^{N}P(Y_{n}=i)p_{i,j}\mathbb{E}\left(\lambda_{j}e^{-s(a_{i}W_{n}+S_{n})}\int_{0}^{a_{i}W_{n}+S_{n}}e^{-(\lambda_{j}-s)y}dy+\int_{a_{i}W_{n}+S_{n}}^{\infty}\lambda_{j}e^{-\lambda_{j}y}dy|Y_{n}=i\right)\vspace{2mm}\\
         =&\sum_{i=1}^{N}P(Y_{n}=i)p_{i,j}\mathbb{E}\left(\frac{\lambda_{j}}{\lambda_{j}-s}e^{-s(a_{i}W_{n}+S_{n})}(1-e^{-(\lambda_{j}-s)(a_{i}W_{n}+S_{n})})+e^{-\lambda_{j}(a_{i}W_{n}+S_{n})}|Y_{n}=i\right)\vspace{2mm}\\
         =&\sum_{i=1}^{N}P(Y_{n}=i)p_{i,j}\mathbb{E}\left(\frac{\lambda_{j}e^{-s(a_{i}W_{n}+S_{n})}-se^{-\lambda_{j}(a_{i}W_{n}+S_{n})}}{\lambda_{j}-s}|Y_{n}=i\right)\vspace{2mm}\\
         =&\sum_{i=1}^{N}p_{i,j}\left[\frac{\lambda_{j}}{\lambda_{j}-s}z_{i}^{n}(a_{i}s)\beta_{i}^{*}(s)-\frac{s}{\lambda_{j}-s}z_{i}^{n}(a_{i}\lambda_{j})\beta_{i}^{*}(\lambda_{j})\right]\vspace{2mm}\\
         =&\frac{\lambda_{j}}{\lambda_{j}-s}\sum_{i=1}^{N}p_{i,j}z_{i}^{n}(a_{i}s)\beta_{i}^{*}(s)-\frac{s}{\lambda_{j}-s}\sum_{i=1}^{N}p_{i,j}z_{i}^{n}(a_{i}\lambda_{j})\beta_{i}^{*}(\lambda_{j}).
    \end{array}
\end{displaymath}
Letting $n\to\infty$ so that $z_{j}^{n}(s)$ tends to $z_{j}(s)$ we get \eqref{op}. Writing the resulting equations in matrix form we get \eqref{bhj}. 
\end{proof}
\begin{remark}
    It is easy to realize that \eqref{bhj} refers to the matrix analogue of equation (2) in \cite{adan}. Moreover, $\tilde{V}(0)=\tilde{0}$ ($\tilde{0}$ is the $N\times 1$ column vector of zeros), so that $\tilde{1}^{T}\tilde{V}(0)=0$ ($\tilde{1}$ is the $N\times 1$ column vector of ones) and $H(0)=P^{T}$, so that by denoting by $H_{j}(0)$ the $j$th column of matrix $H(0)$, we will have $\tilde{1}H_{j}(0)=\sum_{k=1}^{N}p_{j,k}=1$, $j=1,\ldots,N$. So, Theorem \ref{th1} is a matrix analogue of \cite[Theorem 2]{adan}.
\end{remark}

Note that the form of \eqref{bhj} is exactly the same as in \eqref{opq}, where now $H(s)=L(s)P^{T}B^{*}(s)$, $\tilde{V}(s):=s(I-L(s))\tilde{v}$, $\tilde{v}:=(v_{1},\ldots,v_{N})^{T}$, and the contractions $\alpha_{i}(s)$ have a common fixed-point $a=0$. The solution of the vector-valued functional equation \eqref{bhj} is given in Theorem \ref{th1}. It remains to obtain the vector $\tilde{v}$. This is given by the following proposition.
\begin{proposition}\label{prop1}
    The vector $\tilde{v}$ is the unique solution of the following system of equations:
    \begin{equation}
        v_{j}=e_{j}P^{T}B^{*}(\lambda_{j})\sum_{i=1}^{N}\tilde{P}^{(i)}\tilde{Z}(a_{i}\lambda_{j}),\,j=1,\ldots,N,\label{j1}
    \end{equation}
    where $e_{j}$, an $1\times N$ vector with the $j$th element equal to one and all the others equal to zero, and for $i,j=1,\ldots,N$:
    \begin{displaymath}
        \begin{array}{rl}
    \tilde{Z}(a_{i}\lambda_{j})=&\sum_{k=0}^{\infty}\sum_{i_{1}+\ldots+i_{N}=k}F_{i_{1},\ldots,i_{N}}(a_{i}\lambda_{j})V(\alpha_{i_{1},\ldots,i_{N}}(a_{i}\lambda_{j}))+\lim_{n\to\infty}\sum_{i_{1}+\ldots+i_{N}=n+1}F_{i_{1},\ldots,i_{N}}(a_{i}\lambda_{j})\tilde{\pi}.
    \end{array}
    \end{displaymath}
\end{proposition}

\subsubsection{The case where $A_{n+1}$, $S_{n}$ are conditionally dependent based on the FGM copula}\label{fgm}
Contrary to the case considered above, we now assume that given $Y_{n}$, $Y_{n+1}$, the random variables $A_{n+1}$, $S_{n}$ are dependent based on the FGM copula. Under such an assumption, for $n\geq 0$, $x,y\geq 0$, $i,j=1,\ldots,N$:
\begin{equation}
    \begin{array}{l}
         P(A_{n+1}\leq x,S_{n}\leq y,Y_{n+1}=j|Y_{n}=i,A_{2},\ldots,A_{n},S_{1},\ldots,S_{n-1},Y_{1},\ldots,Y_{n-1}) \vspace{2mm}\\
        =  P(A_{n+1}\leq x,S_{n}\leq y,Y_{n+1}=j|Y_{n}=i)=p_{i,j}F_{S,A|i,j}(y,x),
    \end{array}\label{mp2}
\end{equation}
where, $F_{S,A|i,j}(y,x)$ is the bivariate distribution function of $(S_{n},A_{n+1})$ given $Y_{n}$, $Y_{n+1}$ with marginals $F_{S,i}(y):=B_{i}(y)$, $F_{A,j}(x)$ defined as $F_{S,A|i,j}(y,x)=C_{\Theta}^{FGM}(F_{S,i}(y),F_{A,j}(x))$ for $(y,x)\in \mathbb{R}^{+}\times \mathbb{R}^{+}$. The bivariate density of $(S,A)$ is given by 
\begin{displaymath}
\begin{array}{rl}
     f_{S,A|i,j}(y,x)=&c_{\Theta}^{FGM}(F_{S,i}(y),F_{A,j}(x))f_{S,i}(y)f_{A,j}(x)= f_{S,i}(y)f_{A,j}(x)+\theta_{i,j} g_{i}(y)(2\bar{F}_{A,j}(x)-1)f_{A,j}(x),
\end{array}
\end{displaymath}
where $g_{i}(y):=f_{S,i}(y)(1-2F_{S,i}(y))$ with Laplace transform $g_{i}^{*}(s)=\int_{0}^{\infty}e^{-sy}g_{i}(y)dy$, $\bar{F}_{A,j}(x):=1-F_{A,j}(x)$, $f_{S,i}(y)$, $f_{A,j}(x)$ the densities of $S_{n}|Y_{n}=i$, $A_{n+1}|Y_{n+1}=j$, and $\theta_{i,j}\in[-1,1]$. In our case,
\begin{equation}
    \begin{array}{c}
         f_{S,A|i,j}(y,x)=f_{S,i}(y)\lambda_{j}e^{-\lambda_{j}x}+\theta_{i,j} g_{i}(y)\left[2\lambda_{j}e^{-2\lambda_{j}x}-\lambda_{j}e^{-\lambda_{j}x}\right].
    \end{array}\label{biv1}   
\end{equation}
Our aim is to obtain $z_{j}(s;\Theta)=\mathbb{E}(e^{-sW_{n}}1_{\{Y_{n}=j\}};\Theta)$, where $\Theta:=(\theta_{i,j})_{i,j=1,\ldots,N}$.
\begin{theorem}\label{th2}
    The transforms $z_{j}(s;\Theta)$, $j=1,\ldots,N$, satisfy the following equation:
    \begin{equation}
        \begin{array}{rl}
            z_{j}(s;\Theta)= &\frac{\lambda_{j}}{\lambda_{j}-s}\sum_{i=1}^{N}p_{i,j}\left(\beta_{i}^{*}(s)-\frac{\theta_{i,j} s}{2\lambda_{j}-s}g_{i}(s)\right)z_{i}(a_{i}s;\Theta) \vspace{2mm} \\
             &-s\sum_{i=1}^{N}p_{i,j}\left[\frac{\theta}{2\lambda_{j}-s}g_{i}(2\lambda_{j})z_{i}(2a_{i}\lambda_{j};\Theta)+\frac{\beta_{i}^{*}(\lambda_{j})-\theta g_{i}(\lambda_{j})}{\lambda_{j}-s}z_{i}(a_{i}\lambda_{j};\Theta)\right]. 
        \end{array}\label{fgma}
    \end{equation}
    In matrix terms,
    \begin{equation}
        \tilde{Z}(s;\Theta)=U(s;\Theta)\sum_{i=1}^{N}\tilde{P}^{(i)} \tilde{Z}(a_{i}s;\Theta)+\Tilde{V}(s;\Theta),\label{fefgm}
    \end{equation}
    where now, 
    \begin{displaymath}
        \begin{array}{rl}
            U(s;\Theta):= &L_{1}(s)P^{T}B^{*}(s)+(L_{1}(s)-L_{2}(s))(P^{T}\circ\Theta)G^{*}(s)\vspace{2mm}\\
            =&H(s)+(L_{1}(s)-L_{2}(s))(P^{T}\circ\Theta)G^{*}(s),\vspace{2mm}\\
            \Tilde{V}(s;\Theta)=& (I-L_{1}(s))\tilde{v}^{(1)}+(I-L_{2}(s))\tilde{v}^{(2)},
        \end{array}
    \end{displaymath}
    with $P^{T}\circ\Theta$ denotes the $N\times N$ matrix with $(i,j)$ element equal to $\theta_{i,j}p_{i,j}$ (i.e., the operator "$\circ$" denotes the Hadamard product), $G^{*}(s):=diag(g_{1}^{*}(s),\ldots,g_{N}^{*}(s))$, $L_{k}(s)=diag(\frac{k\lambda_{1}}{k\lambda_{1}-s},\ldots,\frac{k\lambda_{N}}{k\lambda_{N}-s})$, $\tilde{v}^{(k)}:=(v_{1}^{(k)},\ldots,v_{N}^{(k)})^{T}$, $k=1,2,$ where for $j=1,\ldots,N$,
    \begin{displaymath}
   v_{j}^{(1)}:=\sum_{i=1}^{N}p_{i,j} \left(\beta_{i}^{*}(\lambda_{j})-\theta_{i,j} g_{i}^{*}(\lambda_{j}) \right)  z_{i}(a_{i}\lambda_{j};\theta),\,\,\,v_{j}^{(2)}:=\sum_{i=1}^{N}p_{i,j}\theta_{i,j} g_{i}^{*}(2\lambda_{j})z_{i}(2a_{i}\lambda_{j};\theta).
    \end{displaymath} 
    \end{theorem}
The proof of Theorem \ref{th2} is similar to the one in Theorem \ref{theoremm} and further details are omitted. The form of \eqref{fefgm} is the same as that treated in Theorem \ref{th1}, where now the matrix $H(s)$ is replaced by $U(s;\Theta)$ (the form of $U(s;\Theta)$ also indicates the difference in this model with respect to the previous one under the additional dependency based on the FGM copula), and $\tilde{V}(s)$, by $\tilde{V}(s;\Theta)$ in order to indicate the dependency on the the parameters $\theta_{i,j}$. The following lemma summarizes the main result.
    \begin{lemma}\label{thj}
    The vector $\tilde{Z}(s;\Theta)$ is given by
    \begin{equation}
\begin{array}{rl}
    \tilde{Z}(s;\Theta)=&\sum_{k=0}^{\infty}\sum_{i_{1}+\ldots+i_{N}=k}F_{i_{1},\ldots,i_{M}}(s;\Theta)V(\alpha_{i_{1},\ldots,i_{N}}(s);\Theta)+\lim_{n\to\infty}\sum_{i_{1}+\ldots+i_{N}=n+1}F_{i_{1},\ldots,i_{N}}(s;\Theta)\tilde{\pi},
    \end{array}\label{iterbtx}
\end{equation}
where the functions $F_{i_{1},\ldots,i_{N}}(s;\Theta)$ are recursively defined by
\begin{displaymath}
    F_{i_{1},\ldots,i_{N}}(s;\Theta)=\sum_{k=1}^{N}F_{i_{1},\ldots,i_{k}-1,\ldots,i_{N}}(s;\Theta)U(\alpha_{i_{1},\ldots,i_{k}-1,\ldots,i_{N}}(s);\Theta)\tilde{P}^{(k)},
\end{displaymath}
with $F_{0,\ldots,0}(s;\Theta):=I$, and $F_{i_{1},\ldots,i_{N}}(s;\Theta)=O$ (i.e., the zero matrix) if one of the indices equals $-1$.
\end{lemma}
Now it remains to obtain the vectors $\tilde{v}^{(k)}$, $k=1,2,$ i.e., we need obtain $2N$ equations for the $2N$ unknowns $v_{j}^{(k)}$, $j=1,\ldots,N$, $k=1,2$. Setting $s=a_{i}\lambda_{j}$, and $s=2a_{i}\lambda_{j}$, $i,j=1,\ldots,N$ in \eqref{iterbtx} we obtain expressions for $z_{i}(a_{i}\lambda_{j})$, $z_{i}(a_{i}\lambda_{j})$. 
\begin{proposition}
    The vectors $\tilde{v}^{(k)}$, $k=1,2,$ are given as the unique solution of the following system of equations for $j=1,\ldots,N$:
    \begin{equation}
        \begin{array}{rl}
             v_{j}^{(1)}=&e_{j}[P^{T}B^{*}(\lambda_{j})+(P^{T}\circ\Theta)G^{*}(\lambda_{j})]\sum_{i=1}^{N}\tilde{P}^{(i)}\tilde{Z}(a_{i}\lambda_{j};\Theta),  \vspace{2mm}\\
             v_{j}^{(2)}=&e_{j}(P^{T}\circ\Theta)G^{*}(2\lambda_{j})]\sum_{i=1}^{N}\tilde{P}^{(i)}\tilde{Z}(2a_{i}\lambda_{j};\Theta),
        \end{array}
    \end{equation}
    where, for $m=1,2,$
    \begin{displaymath}
        \begin{array}{rl}
          \tilde{Z}(ma_{i}\lambda_{j};\Theta)=&\sum_{k=0}^{\infty}\sum_{i_{1}+\ldots+i_{N}=k}F_{i_{1},\ldots,i_{M}}(ma_{i}\lambda_{j};\Theta)V(\alpha_{i_{1},\ldots,i_{N}}(ma_{i}\lambda_{j});\Theta)\vspace{2mm}\\+&\lim_{n\to\infty}\sum_{i_{1}+\ldots+i_{N}=n+1}F_{i_{1},\ldots,i_{N}}(ma_{i}\lambda_{j};\Theta)\tilde{\pi}.
        \end{array}
    \end{displaymath}
\end{proposition}
\begin{remark}
    Note that for $\Theta=O$, i.e., the independent copula with $\theta_{i,j}=0$, $i,j=1,\ldots,N,$ Lemma \ref{thj} is reduced to Theorem \ref{th1}.
\end{remark}
 \begin{remark}
    An extension to the case where $A_{n+1}|Y_{n+1}=j$ is of phase-type, e.g., a mixed Erlang distribution with cdf
  \begin{displaymath}
   \begin{array}{c}
        G_{A,j}(x):=\sum_{m=1}^{M}q_{m}(1-e^{-\lambda_{j}x}\sum_{l=0}^{m-1}\frac{(\lambda_{j}x)^{l}}{l!}),\,x\geq 0,\end{array}
    \end{displaymath}
    can be handled at a cost of a more complicated expressions. In such a case,
    \begin{displaymath}
        f_{S,A|i,j}(y,x)=\sum_{m=1}^{M}q_{m}e^{-\lambda_{j}x}\frac{\lambda_{j}^{m}x^{m-1}}{(m-1)!}\left(f_{S,i}(y)+\theta_{i,j}g_{i}(y)\left[2\sum_{m=1}^{M}q_{m}e^{-\lambda_{j}x}\sum_{l=0}^{m-1}\frac{\lambda_{j}^{l}}{l!}-1\right]\right).
    \end{displaymath}
\end{remark}

\subsection{A modulated $D_{N}/G/1$ shot-noise queue}\label{shot}
We now focus on the workload at arrival instants of a modulated $D_{N}/G/1$ shot-noise queue, that refers to a single server queue where the server’s speed is workload proportional, i.e., when the workload is $x$, the server’s speed equals $rx$; see \cite{shots} for a recent survey on shot-noise queueing systems, as well as \cite[Section 6]{dimitriou2024markov} that focused on the case where $N=1$. Our system operates as follows: Assume that
the interarrival times $A_1,A_2,\ldots$ are such that $\mathbb{E}(e^{-sA_{n}}1_{\{Y_{n-1}=i\}}) = e^{-st_{i}}$, $i=1,\ldots,N$. There is a single server, and
service requirements of successive customers $S_1, S_2,\ldots$ are i.i.d. random variables. We assume that just before the arrival of the $n$th customer, additional amount of work equal to
$C_n$ is added. This can be explained as random noise caused by the arrival, and may be positive or negative. Let
\begin{displaymath}
    C_{n}=\left\{\begin{array}{ll}
    C_{n}^{+},&\text{ with probability }p, \\
     -C_{n}^{-},&\text{ with probability }q=1-p. 
\end{array}\right.
\end{displaymath}
We further adopt the dynamics in \eqref{mp1}, i.e.,, for $n\geq 0$, $x,y\geq 0$, $i,j=1,\ldots,N$:
\begin{equation*}
    \begin{array}{l}
         P(C_{n+1}\leq x,S_{n}\leq y,Z_{n+1}=j|Z_{n}=i,(C_{r+1},S_{r},Z_{r}),r=0,1,\ldots,n-1) \vspace{2mm}\\
        =  P(C_{n+1}\leq x,S_{n}\leq y,Z_{n+1}=j|Z_{n}=i)=p_{i,j}F_{S,i}(y)G_{C,j}(x),
    \end{array}
\end{equation*}
where $C^{+}|j$ has a general distribution with LST $c_{j}^{*}(s)$, and $C^{-}|j\sim\exp(\nu_{j})$, $j=1,\ldots,N$. Then, if $W_{n}$ is the workload before the $n$th arrival, we are dealing with a modulated stochastic recursion of the form $W_{n+1}=[e^{-rA_{n+1}}(W_{n}+S_{n})+C_{n+1}]^{+}$. Then, for $j=1,\ldots,N,$
\begin{displaymath}
    \begin{array}{rl}
        z_{j}^{n+1}(s)= &\mathbb{E}\left(e^{-sW_{n+1}}1_{\{Y_{n+1}=j\}}\right)=\sum_{i=1}^{N}P(Y_{n}=i) p_{i,j}E\left(e^{-sW_{n+1}}|Y_{n+1}=j,Y_{n}=i\right) \vspace{2mm}\\
         =& \sum_{i=1}^{N}P(Y_{n}=i)p_{i,j} \left[ p\mathbb{E}\left(e^{-s(e^{-rA_{n+1}}(W_{n}+S_{n})+C^{+}_{n+1})}|Y_{n+1}=j,Y_{n}=i\right)\right.\vspace{2mm}\\
         &\left.+q\mathbb{E}\left(e^{-s[e^{-rA_{n+1}}(W_{n}+S_{n})-C^{-}_{n+1}]^{+}}|Y_{n+1}=j,Y_{n}=i\right) \right]\vspace{2mm} \\
         =&\sum_{i=1}^{N}p_{i,j} \left[ pc_{j}^{*}(-s)\beta_{i}^{*}(se^{-rt_{i}})z_{i}^{n}(se^{-rt_{i}})\right.\vspace{2mm}\\
         &\left.+qP(Y_{n}=i)\mathbb{E}\left(\int_{y=0}^{e^{-rA_{n+1}(W_{n}+S_{n})}}e^{-s(e^{-rA_{n+1}}(W_{n}+S_{n})-y)}\nu_{j}e^{-\nu_{j}y}dy\right.\right.\vspace{2mm}\\
         &\left.\left.+\int_{y=e^{-rA_{n+1}}(W_{n}+S_{n})}^{\infty}\nu_{j}e^{-\nu_{j}y}dy|Y_{n}=i\right) \right]\vspace{2mm}\\
         =&\sum_{i=1}^{N}p_{i,j} \left[ pc_{j}^{*}(s)\beta_{i}^{*}(se^{-rt_{i}})z_{i}^{n}(se^{-rt_{i}})\right.\vspace{2mm}\\
         &\left.+qP(Y_{n}=i)\mathbb{E}\left(\frac{\nu_{j}}{\nu_{j}-s}e^{-se^{-rA_{n+1}}(W_{n}+S_{n})}-\frac{s}{\nu_{j}-s}e^{\nu_{j}e^{-rA_{n+1}}(W_{n}+S_{n})}|Y_{n}=i\right)\right]\vspace{2mm}\\
         =&\left(pc_{j}^{*}(s)+q\frac{\nu_{j}}{\nu_{j}-s}\right)\sum_{i=1}^{N}p_{i,j}\beta_{i}^{*}(se^{-rt_{i}})z_{i}^{n}(se^{-rt_{i}})-\frac{sq}{\nu_{j}-s}\sum_{i=1}^{N}p_{i,j}\beta_{i}^{*}(\nu_{j}e^{-rt_{i}})z_{i}^{n}(\nu_{j}e^{-rt_{i}}).
    \end{array}
\end{displaymath}
Letting $n\to\infty$, so that $z_{j}^{n}(s)\to z_{j}(s)$ we have
\begin{equation}
    z_{j}(s)=\left(pc_{j}^{*}(s)+q\frac{\nu_{j}}{\nu_{j}-s}\right)\sum_{i=1}^{N}p_{i,j}\beta_{i}^{*}(se^{-rt_{i}})z_{i}(se^{-rt_{i}})-\frac{sq}{\nu_{j}-s}\sum_{i=1}^{N}p_{i,j}\beta_{i}^{*}(\nu_{j}e^{-rt_{i}})z_{i}(\nu_{j}e^{-rt_{i}}).\label{ol}
\end{equation}
In matrix notation, \eqref{ol} is written as
\begin{equation}
    \tilde{Z}(s)=\tilde{C}(s)P^{T}\sum_{i=1}^{N}\tilde{P}^{(i)}B^{*}(se^{-rt_{i}})\tilde{Z}(se^{-rt_{i}})+\tilde{Q}(s),\label{ol1}
\end{equation}
where $\tilde{P}^{(i)}$ as given in Theorem \ref{th1}, $\tilde{C}(s):=pC(s)+q\widehat{L}(s)$, $C(s):=diag(c_{1}^{*}(s),\ldots,c_{N}^{*}(s))$, $\widehat{L}(s):=diag(\frac{\nu_{1}}{\nu_{1}-s},\ldots,\frac{\nu_{N}}{\nu_{N}-s})$, $\tilde{Q}(s):=q(I-\widehat{L}(s))\tilde{r}$, $\tilde{r}=(r_{1},\ldots,r_{N})$, with $r_{j}=\sum_{i=1}^{N}p_{i,j}\beta_{i}^{*}(\nu_{j}e^{-rt_{i}})z_{i}^{n}(\nu_{j}e^{-rt_{i}})$.

Note that \eqref{ol1} is more complicated with respect to \eqref{bhj}, since the matrix $B^{*}(se^{-rt_{i}})$ is inside the summation, i.e., \eqref{ol1} is written as
\begin{equation}
    \tilde{Z}(s)=H(s)\sum_{i=1}^{N}\tilde{P}^{(i)}B^{*}(a_{i}s)\tilde{Z}(a_{i}s)+\tilde{Q}(s),
    \label{bhja}
\end{equation}
where $a_{i}=e^{-rt_{i}}$, $i=1,\ldots,N$, $H(s):=\tilde{C}(s)P^{T}$. Moreover, the $n$th iterative of $\zeta_{i}(s):=a_{i}s$, i.e., $\zeta_{i}^{(n)}(s)=\zeta_{i}(\zeta_{i}(\ldots\zeta_{i}(s)\ldots))=se^{-rnt_{i}}\to 0$ as $n\to \infty$. Moreover, \eqref{bhja} is slightly different from \eqref{bhj}, since the mappings $\zeta_{i}(s)$, $i=1,\ldots,N,$ are inside the summation. However, $H(e^{-rt_{i}m}s)\to P^{T}$ as $m\to\infty$, $i=1,\ldots,N$, $\tilde{Q}(0)=\tilde{0}$. Note that \eqref{bhja} is not exactly of the form \eqref{opq}, due to the presence of the matrix $B^{*}(a_i s)$ inside the summation. However, it can be treated similarly. Indeed, the mappings $\zeta_i(s)=a_i s$, $i=1,\ldots,N$, are commutative contractions with common fixed point $0$, since $a_i\in(0,1)$ and $\zeta_i^{(n)}(s)\to 0$ as $n\to\infty$. Moreover, $B^{*}(s)$ is analytic and satisfies $B^{*}(0)=I$, while $H(\zeta_{i}^{(n)}(s))\to P^T$ and $\tilde{Q}(0)=\tilde{0}$. Therefore, near the fixed point the recursion reduces to the canonical form of \eqref{opq}, and the additional factor $B^{*}(a_i s)$ constitutes a bounded analytic perturbation that does not affect the convergence arguments. Consequently, the iterative scheme and absolute convergence results of Theorem~\ref{th1} remain valid.

Note that it remains to obtain the values of the vector $\tilde{r}$. This task can be accomplished using steps similar to those in Proposition \ref{prop1}, so further details are omitted.
\begin{remark}
    Note that in this section we assumed that given $Y_{n}$, $Y_{n+1}$, $C_{n+1}$ and $S_{n}$ are conditionally independent. The analysis can be further applied in case we consider additional dependency, e.g., by assuming that $C_{n+1}$ and $S_{n}$ are dependent based on the FGM copula as in subsection \ref{fgm}, or have a (semi-)linear depend structure, e.g., $C_{n+1}=aS_{n}+J_{n}$, where $J_{n}$ independent random variable from $S_{n}$.
\end{remark}
\subsection{A modulated Markovian queue where service time depends on the waiting time}\label{mark}
Consider the following modulated version of a variant of a M/M/1 queue that was investigated in \cite[Section 5]{boxman}; see also \cite[subsection 6.2]{dimitriou2024markov} for a modulated version. In particular, consider a variant of a M/M/1 queue, in which if the waiting time of the $n$th arriving customer equals $W_n$, then her service time equals $[S_{n}-cW_{n}]^{+}$, where $c>0$. The dynamics in \eqref{mp1} are such that for $n\geq 0$, $x,y\geq 0$, $i,j=1,\ldots,N$:
\begin{equation*}
    \begin{array}{l}
         P(A_{n+1}\leq x,S_{n}\leq y,Y_{n+1}=j|Y_{n}=i,A_{2},\ldots,A_{n},S_{1},\ldots,S_{n-1},Y_{1},\ldots,Y_{n-1}) \vspace{2mm}\\
        =  P(A_{n+1}\leq x,S_{n}\leq y,Y_{n+1}=j|Y_{n}=i)=p_{i,j}(1-e^{-\mu_{i}y})(1-e^{-\lambda_{j}x}).
    \end{array}
\end{equation*}
Contrary to the case in \cite[subsection 6.2]{dimitriou2024markov}, we now assume that the service time also depends on the state of the background process.

Using similar arguments as above:
\begin{displaymath}
    \begin{array}{rl}
        z_{j}^{n+1}(s)= &\mathbb{E}\left(e^{-sW_{n+1}}1_{\{Y_{n+1}=j\}}\right)= \sum_{i=1}^{N}P(Y_{n}=i)p_{i,j}\mathbb{E}\left(e^{-s[W_{n}+[S_{n}-cW_{n}]^{+}-A_{n+1}]^{+}}|Y_{n+1}=j,Y_{n}=i\right)\vspace{2mm}\\
        =&\sum_{i=1}^{N}P(Y_{n}=i)p_{i,j}\left[\mathbb{E}\left(\int_{0}^{W_{n}+[S_{n}-cW_{n}]^{+}}e^{-s(W_{n}+[S_{n}-cW_{n}]^{+}-y)}\lambda_{j}e^{-\lambda_{j}y}dy|Y_{n}=i\right)\right.\vspace{2mm}\\&\left.+\mathbb{E}\left(\int_{W_{n}+[S_{n}-cW_{n}]^{+}}^{\infty}\lambda_{j}e^{-\lambda_{j}y}dy|Y_{n}=i\right)\right]\vspace{2mm}\\
         =&\sum_{i=1}^{N}P(Y_{n}=i)p_{i,j}\mathbb{E}\left(\frac{\lambda_{j}e^{-s(W_{n}+[S_{n}-cW_{n}]^{+})}-se^{-\lambda_{j}(W_{n}+[S_{n}-cW_{n}]^{+})}}{\lambda_{j}-s}|Y_{n}=i\right)\vspace{2mm}\\
=&\frac{\lambda_{j}}{\lambda_{j}-s}\sum_{i=1}^{N}p_{i,j}\left(z_{i}^{n}(s)-\frac{s}{\mu_{i}+s}z_{i}^{n}(s+\mu_{i} c)\right)-\frac{s}{\lambda_{j}-s}\sum_{i=1}^{N}p_{i,j}\left(z_{i}^{n}(\lambda_{j})-\frac{\lambda_{j}}{\mu_{i}+\lambda_{j}}z_{i}^{n}(\lambda_{j}+\mu_{i} c)\right).
    \end{array}
\end{displaymath}
As $n\to\infty$, $z^{n}_{j}(s)\to z_{j}(s)$, we have,
\begin{equation*}
    z_{j}(s)=\frac{\lambda_{j}}{\lambda_{j}-s}\sum_{i=1}^{N}p_{i,j}\left[z_{i}(s)-\frac{s}{\mu_{i}+s}z_{i}(s+\mu_{i} c)\right]-\frac{s}{\lambda_{j}-s}\sum_{i=1}^{N}p_{i,j}\left[z_{i}(\lambda_{j})-\frac{\lambda_{j}}{\mu_{i}+\lambda_{j}}z_{i}(\lambda_{j}+\mu_{i} c)\right],
\end{equation*}
or equivalently,
\begin{equation}
\lambda_{j}\sum_{i=1}^{N}p_{i,j}z_{i}(s)+(s-\lambda_{j})z_{j}(s)-s\lambda_{j}\sum_{i=1}^{N}\frac{p_{i,j}}{\mu_{i}+s}z_{i}(s+\mu_{i} c)=sv_{j},
    \label{kop}
\end{equation}
where $v_{j}=\sum_{i=1}^{N}p_{i,j}\left[z_{i}(\lambda_{j})-\frac{\lambda_{j}}{\mu_{i}+\lambda_{j}}z_{i}(\lambda_{j}+\mu_{i} c)\right]$, $j=1,\ldots,N$. 

For $s=0$, \eqref{kop} yields $z_{j}(0)=\sum_{i=1}^{N}p_{i,j}z_{i}(0)$, thus, $\tilde{Z}(0)=\tilde{\pi}$. In matrix terms, \eqref{kop} is rewritten as:
\begin{equation}
    D(s)\tilde{Z}(s)=s\tilde{v}+\Lambda P^{T}\sum_{i=1}^{N}H^{(i)}(s)\tilde{Z}(s+\mu_{i} c),
    \label{kop1}
\end{equation}
%\begin{equation}
 %   (I-L(s)P^{T})\tilde{Z}(s)=\frac{s}{\mu+s}L(s)P^{T}\tilde{Z}(s+\mu c)-s\Lambda^{-1}L(s)\tilde{v},
%\end{equation}
where $D(s)=sI-\Lambda(I-P^{T})$, $\tilde{v}=(v_{1},\ldots,v_{N})^{T}$, $H^{(i)}(s)=(I-M(s))\tilde{P}^{(i)}$, $M(s)=diag(\frac{\mu_{1}}{\mu_{1}+s},\ldots,\frac{\mu_{N}}{\mu_{N}+s})$. Note that $H^{(i)}(s)$ is an $N\times N$ matrix with the $(i,i)$ element equal to $\frac{1}{\mu_{i}+s}$, $i=1,2,\ldots,N$, and all other elements equal to zero. Note that $v_{j}=P(W=0;1_{\{Y_{n+1}=j\}})$.
\begin{lemma}
    The matrix $\Lambda(I-P^{T})$ has exactly $N$ eigenvalues $\gamma_{i}$, $i=1,\ldots,N$, with $\gamma_{1}=0$, and $Re(\gamma_{i})>0$, $i=2,\ldots,N$. 
\end{lemma}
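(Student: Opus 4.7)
The plan is to identify $\Lambda(I-P^{T})$, up to similarity and a sign, with the transpose of a bona fide infinitesimal generator of an irreducible continuous-time Markov chain, and then to invoke the classical spectral theory of such generators. First, since $\Lambda$ is invertible (as $\lambda_{i}>0$ for all $i$), the matrices $\Lambda(I-P^{T})$ and $(I-P^{T})\Lambda$ are similar through conjugation by $\Lambda$, hence share the same spectrum. A direct computation, using $\Lambda^{T}=\Lambda$, gives
\begin{displaymath}
(I-P^{T})\Lambda \;=\; -\bigl(\Lambda(P-I)\bigr)^{T},
\end{displaymath}
and since transposition preserves the spectrum, the eigenvalues $\gamma_{i}$ of $\Lambda(I-P^{T})$ are exactly the negatives of those of $G:=\Lambda(P-I)$.

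The second step is to recognise $G=\Lambda(P-I)$ as the infinitesimal generator of an irreducible continuous-time Markov chain on $E$: its off-diagonal entries $\lambda_{i}p_{i,j}$ are non-negative, its diagonal entries $-\lambda_{i}(1-p_{i,i})$ are non-positive, and every row sums to zero since $P\mathbf{1}=\mathbf{1}$. The irreducibility of $P$ together with $\lambda_{i}>0$ propagates to $G$. Standard Perron--Frobenius theory for irreducible generators (equivalently, for the stochastic semigroup $e^{tG}$) then yields that $0$ is a simple eigenvalue of $G$ and every other eigenvalue has strictly negative real part. Transferring this through the identity above gives $\gamma_{1}=0$ simple and $\mathrm{Re}(\gamma_{i})>0$ for $i=2,\ldots,N$, which is the claim.

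The main technical obstacle I anticipate is the strict inequality $\mathrm{Re}(\gamma_{i})>0$ for $i\geq 2$: Gershgorin's theorem alone places the spectrum of $G$ inside disks in the closed left half-plane that are tangent to the imaginary axis at the origin, so on its own it does not preclude additional eigenvalues on the imaginary axis. The exclusion is where irreducibility is essential: for any $t>0$, $e^{tG}$ is an irreducible stochastic matrix whose only eigenvalue of modulus one is the simple Perron eigenvalue $1$, so any eigenvalue of $G$ on the imaginary axis must equal $0$ and be simple. This can be cited as a classical fact or inserted as a one-line subordinate argument via $e^{tG}$.
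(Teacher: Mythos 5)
Your proof is correct, and it takes a genuinely different route from the paper's. The paper argues directly on $\Lambda(I-P^{T})$: it observes that $s=0$ is a root of $\det(D(s))=0$ because $P$ is stochastic, and then invokes Gershgorin's circle theorem with disks centred at $\lambda_{i}(1-p_{i,i})$ of radius $\lambda_{i}\sum_{k\neq i}p_{k,i}$. You instead conjugate by $\Lambda$ and transpose to identify the spectrum of $\Lambda(I-P^{T})$ with the negative of the spectrum of the generator $G=\Lambda(P-I)$, and then appeal to Perron--Frobenius theory for irreducible generators. Your route buys two things the paper's argument does not deliver as written. First, the radii $\lambda_{i}\sum_{k\neq i}p_{k,i}$ involve \emph{column} sums of $P$, which are not controlled by row-stochasticity, so the displayed disks need not lie in the closed right half-plane; the Gershgorin argument only works after passing to a similar or transposed matrix (e.g.\ the column version applied to $(I-P^{T})\Lambda$ gives the $j$th disk centred at $\lambda_{j}(1-p_{j,j})$ with radius exactly $\lambda_{j}(1-p_{j,j})$), which is in effect the reduction you perform. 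Second, even the corrected Gershgorin bound only confines the spectrum to disks tangent to the imaginary axis at the origin, so it yields $\mathrm{Re}(\gamma_{i})\geq 0$ and forces any purely imaginary eigenvalue to equal $0$, but it does not by itself give simplicity of the zero eigenvalue, hence not the strict inequality $\mathrm{Re}(\gamma_{i})>0$ for $i\geq 2$ (indeed the paper's closing sentence, that every $\gamma_{i}$ has positive real part, would contradict $\gamma_{1}=0$). Your use of irreducibility, via the primitivity of $e^{tG}$ for $t>0$, is precisely what closes that gap, so your argument is the more complete of the two.
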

\begin{proof}
    Clearly, $s:=\gamma_{1}=0$ is a root of $det(D(s)=0)$, since $P$ is a stochastic matrix. By applying Gersgorin's circle theorem \cite[Th. 1, Section 10.6]{lanc}, every eigenvalue of $\Lambda(I-P^{T})$ lies in at least one of the disks
    \begin{displaymath}
        \{s:|s-\lambda_{i}(1-p_{i,i})|\leq \sum_{k\neq i}|\lambda_{i}p_{k,i}|=\lambda_{i}\sum_{k\neq i}p_{k,i}\}.
    \end{displaymath}
    Therefore, for each $i$, the real part of $\gamma_{i}$ is positive.
\end{proof}

Let, 
\begin{displaymath}
    \zeta_{s}:=\{s:Re(s)\geq 0, Det(D(s))\neq 0\}.
\end{displaymath}
Then, for $s\in \zeta_{s}$,
\begin{equation}
    \tilde{Z}(s)=A(s)\tilde{v}+G(s)\sum_{i=1}^{N}H^{(i)}(s)\tilde{Z}(s+\mu_{i} c),\label{cz}
\end{equation}
where $A(s):=sD^{-1}(s)$, $G(s):=D^{-1}(s)\Lambda P^{T}$. Note that in this case $\alpha_{i}(s)$ are contractions, but not commutative. Iterating \eqref{cz} and having in mind that $\tilde{Z}(s)\to \tilde{0}$ as $s\to\infty$ we finally have the following lemma.
\begin{lemma}
    The vector $\tilde{Z}$ is given by
    \begin{equation}
    \tilde{Z}(s)=\sum_{k=0}^{\infty}\sum_{i_{1}+\ldots+i_{N}=k}L_{i_{1},\ldots,i_{N}}(s)A(\zeta_{i_{1},\ldots,i_{N}}(s))\tilde{v},\label{no}
\end{equation}
where the functions $L_{i_{1},\ldots,i_{N}}(s)$ are derived recursively by,
\begin{displaymath}
    L_{i_{1},\ldots,i_{N}}(s)=\sum_{k=1}^{N}L_{i_{1},\ldots,i_{k}-1,\ldots,i_{N}}(s)G(\zeta_{i_{1},\ldots,i_{k}-1,\ldots,i_{N}}(s))H^{(k)}(\zeta_{i_{1},\ldots,i_{k}-1,\ldots,i_{N}}(s)),
\end{displaymath}
and 
\begin{displaymath}
    \zeta_{i_{1},\ldots,i_{k}-1,\ldots,i_{N}}(s)=\zeta_{1}^{i_{1}}(\zeta_{2}^{i_{3}}(\ldots(\zeta_{N}^{i_{N}}(s))\ldots),
\end{displaymath}
and $\zeta_{k}^{i_{k}}(s)$ is the $k$th iterate of $\zeta_{k}(s)=s+\mu_{k}c$, i.e., $\zeta_{k}^{i_{k}}(s)=s+i_{k}\mu_{k}c$, $k=1,\ldots,N$.
\end{lemma}

Note that $D(s)$ is singular at the eigenvalues of $\Lambda(I-P^{T})$, i.e., $det(D(s))=0$ at $s=\gamma_{l}$, $l=1,\ldots,N$. However, $\tilde{Z}(s)$ is analytic in the half-plane $Re(s)\geq 0$, and thus, the vector $\tilde{v}$  will be derived so that the right-hand side of \eqref{no} is finite at $s=\gamma_{l}$, $l=1,\ldots,N$. Divide \eqref{kop1} with $s$ and denote by $\tilde{y}_{l}$, the left (row) eigenvector of $\Lambda(I-P^{T})$, associated with the eigenvalue $\gamma_{l}$, $l=1,\ldots,N$ ($y_{1}=\mathbf{1}$ is the row eigenvector with all elements equal to 1, corresponding to the eigenvalue $\gamma_{1}=0$). Then, \eqref{kop1} is written as
\begin{equation}
    \tilde{y}_{l}(1-\frac{\gamma_{l}}{s})\tilde{Z}(s)=\tilde{v}+\tilde{y}_{l}\Lambda P^{T}\sum_{i=1}^{N}\tilde{T}^{(i)}(s)\tilde{Z}(s+\mu_{i} c),\,l=1,\ldots,N,\,Re(s)\geq 0,\label{ftu}
\end{equation}
where $\tilde{T}^{(i)}(s):=s^{-1}H^{(i)}(s)=diag(\frac{1}{\mu_{1}+s},\ldots,\frac{1}{\mu_{N}+s})\tilde{P}^{(i)}$, $i=1,\ldots,N$.

Letting $s=\gamma_{l}$, $l=1,\ldots,N$, and using \eqref{no}, we obtain $N$ equations for the derivation of the $N$ elements of $\tilde{v}$:
\begin{equation}
    \tilde{y}_{l}\tilde{v}=1_{\{l=1\}}+\frac{1}{\mu+\gamma_{i}}\tilde{y}_{i}\Lambda P^{T}\sum_{i=1}^{N}\tilde{T}^{(i)}(\gamma_{l})\sum_{k=0}^{\infty}\sum_{i_{1}+\ldots+i_{N}=k}L_{i_{1},\ldots,i_{N}}(\gamma_{l}+\mu_{i}c)A(\zeta_{i_{1},\ldots,i_{N}}(\gamma_{l}+\mu_{i}c))\tilde{v}.
\end{equation}
\subsection{An integer vector-valued reflected autoregressive process}\label{inar}
Consider an integer-valued stochastic process recursion that is described by 
\begin{equation}
    X_{n+1}=[\sum_{k=1}^{X_{n}}U_{k,n}+B_{n}-1]^{+}.
\end{equation}
Such a recursion describes the
number of waiting customers in a generalized M/G/1 queue with impatient customers, just after the beginning of the $n$th service. $X_{n}$ describes that number and $B_{n}$ is the number of customers arriving during the service time of the
$n$th customer. The service times are governed
by a Markov process $Y_n$, $n = 0, 1,\ldots$ that takes values in $E=\{1, 2,..., N\}$. $U_{k,n}$ are i.i.d. Bernoulli distributed random variables with $P(U_{k,n} = 1|Y_{n}=i) = \xi_{n}^{(i)}$, $P(U_{k,n}=0|Y_{n} = i) = 1 -\xi_{n}^{(i)}$. Moreover, we assume that the $\xi_{n}^{(i)}$ are themselves random variables, independent and identically
distributed with $P(\xi_{n}^{(i)}= a_{i,j}|Y_{n}=i) = q_{i,j}$ , $i,j = 1,..., N$, with $a_{i,j}\in(0,1)$. Moreover, set $Q=(q_{i,j})_{i,j=1,\ldots,N}$.

Denote by for $i, j = 1, 2, . . . , N$, $|z|\leq1$,
\begin{displaymath}\begin{array}{rl}
     B_{i,j}(z):=&\mathbb{E}(z^{B_{n}}1_{\{Y_{n+1}=j\}}|Y_{n}=i),
\end{array}
\end{displaymath}
and let $B(z)=(B_{i,j}(z))_{i,j=1,\ldots,N}$. Then,
\begin{equation}
    \begin{array}{rl}
         \mathbb{E}(z^{X_{n+1}}1_{\{Y_{n+1}=j\}})=&\mathbb{E}(z^{[\sum_{k=1}^{X_{n}}U_{k,n}+B_{n}-1]^{+}}1_{\{Y_{n+1}=j\}}) \vspace{2mm} \\
         =& \mathbb{E}(\left(z^{\sum_{k=1}^{X_{n}}U_{k,n}+B_{n}-1}+1-z^{[\sum_{k=1}^{X_{n}}U_{k,n}+B_{n}-1]^{-}}\right)1_{\{Y_{n+1}=j\}})\vspace{2mm} \\
         =& \mathbb{E}(z^{\sum_{k=1}^{X_{n}}U_{k,n}+B_{n}-1}1_{\{Y_{n+1}=j\}})+P(Y_{n+1}=j)-\mathbb{E}(z^{[\sum_{k=1}^{X_{n}}U_{k,n}+B_{n}-1]^{-}}1_{\{Y_{n+1}=j\}}).
    \end{array}\label{b1}
\end{equation}
Now,
\begin{equation}
    \begin{array}{rl}
         \mathbb{E}(z^{\sum_{k=1}^{X_{n}}U_{k,n}+B_{n}-1}1_{\{Y_{n+1}=j\}})=&\frac{1}{z}\sum_{i=1}^{N}\mathbb{E}(z^{\sum_{k=1}^{X_{n}}U_{k,n}+B_{n}}1_{\{Y_{n+1}=j\}}|Y_{n}=i)P(Y_{n}=i) \vspace{2mm} \\
         =&\frac{1}{z}\sum_{i=1}^{N}\mathbb{E}(z^{\sum_{k=1}^{X_{n}}U_{k,n}}|Y_{n}=i)\mathbb{E}(z^{B_{n}}1_{\{Y_{n+1}=j\}}|Y_{n}=i)P(Y_{n}=i)  \vspace{2mm}  \\
         =& \frac{1}{z}\sum_{i=1}^{N}\mathbb{E}(z^{\sum_{k=1}^{X_{n}}U_{k,n}}|Y_{n}=i)B_{i,j}(z)P(Y_{n}=i).
    \end{array}\label{b2}
\end{equation}
Then, tedious but standard calculations yield,
\begin{equation}
    \begin{array}{rl}
\mathbb{E}(z^{\sum_{k=1}^{X_{n}}U_{k,n}}|Y_{n}=i)P(Y_{n}=i)=&\sum_{l=1}^{N}q_{i,l}\mathbb{E}((a_{i,j}(z))^{X_{n}}1_{\{Y_{n}=i\}}),
    \end{array}\label{b3}
\end{equation}
where $a_{i,j}(z):=\bar{a}_{i,j}+a_{i,j}z$, $\bar{a}_{i,j}:=1-a_{i,j}$, $i,j=1,\ldots,N$. Note that $a_{i,j}(z)$, $i,j=1,\ldots,N$, are commutative contraction mappings on the closed unit disk. Moreover,
\begin{equation}
\begin{array}{rl}
      \mathbb{E}(z^{[\sum_{k=1}^{X_{n}}U_{k,n}+B_{n}-1]^{-}}1_{\{Y_{n+1}=j\}})=&\mathbb{E}(1_{\{Y_{n+1}=j\}}1_{\{\sum_{k=1}^{X_{n}}U_{k,n}+B_{n}-1\geq 0\}})+\frac{1}{z}\mathbb{E}(1_{\{Z_{n+1}=j\}}1_{\{\sum_{k=1}^{X_{n}}U_{k,n}+B_{n}-1=-1\}}) \vspace{2mm}  \\
     =&P(Y_{n+1}=j)-(1-\frac{1}{z})\mathbb{E}(1_{\{Y_{n+1}=j\}}1_{\{\sum_{k=1}^{X_{n}}U_{k,n}+B_{n}-1=-1\}}).
\end{array}\label{b4}
\end{equation}
Denoting $f_{j}(z)=\lim_{n\to\infty}\mathbb{E}(z^{X_{n}}1_{\{Y_{n}=j\}})$, $\tilde{F}(z):=(f_{1}(z),\ldots,f_{N}(z))^{T}$, we have the following result.
\begin{theorem}\label{thq}
    The generating functions $f_{j}(z)$, $j=1,\ldots,N$, satisfy the following system
    \begin{equation}
        \begin{array}{rl}
             f_{j}(z)=& \frac{1}{z}\sum_{i=1}^{N}B_{i,j}(z)\sum_{l=1}^{N}q_{i,l}f_{i}(a_{i,l}(z))+(1-\frac{1}{z})q_{-1,j},    \end{array}\label{a11}
    \end{equation}
    or equivalently, in matrix notation
    \begin{equation}
        \tilde{F}(z)=\frac{1}{z}B^{T}(z)\sum_{i=1}^{N}Q^{(i)}\sum_{l=1}^{N}P_{i}^{(l)}\tilde{F}(a_{i,l}(z))+\tilde{K}(z),\label{a12}
    \end{equation}
    where $\tilde{K}(z)=(1-\frac{1}{z})\tilde{q}_{-1}$, $\tilde{q}_{-1}:=(q_{-1,1},\ldots,q_{-1,N})^{T}$, and for $j=1,\ldots,N$
    \begin{displaymath}
        q_{-1,j}=\sum_{i=1}^{N}B_{i,j}(0)\sum_{l=1}^{N}q_{i,l}f_{i}(\bar{a}_{i,l}).
    \end{displaymath}
    Moreover, $Q^{(i)}$ is a $N\times N$ matrix with rows equal to zero except row $i$ that coincides with row $i$ of matrix $Q$, $P_{l}^{(i)}$ is a  $N\times N$ matrix with all entities equal to zero except the $(i,l)$ entity which is equal to one. Note that $\sum_{i=1}^{N}Q^{(i)}\sum_{l=1}^{N}P_{i}^{(l)}=I$.
    
\end{theorem}
\begin{proof}
    Substituting \eqref{b2}-\eqref{b4} in \eqref{b1}, and letting $n\to\infty$ we obtain after tedious calculations in \eqref{a11}. Now multiplying \eqref{a11} with $z$ and then letting $z=0$, we obtain the expression for $q_{-1,j}$. In matrix notation, \eqref{a11} is written as \eqref{a12}.
\end{proof}

Setting $G(z):=\frac{1}{z}B^{T}(z)$ and $T_{i,j}=Q^{(i)}P_{i}^{(j)}$, $i,j=1,\ldots,N$, \eqref{a12} is rewritten as
\begin{equation}
     \tilde{F}(z)=G(z)\sum_{i=1}^{N}\sum_{j=1}^{N}T_{i,j}\tilde{F}(a_{i,j}(z))+\tilde{K}(z),\label{a13}
\end{equation}
Note that the fixed point of the iterates $a_{i,j}(z) = 1-a_{i,j} + a_{i,j}z$ is $z = 1$, and
we have that $\tilde{K}(1) = \textbf{0}$. Thus, $\tilde{F}(z)$ follows from a modification of Theorem \ref{th1}. In particular, let
\begin{displaymath}
    a^{i(1,1),i(1,2),\ldots,(N,N)}(z):=a_{1,1}^{i(1,1)}(a_{1,2}^{i(1,2)}(\ldots a_{N,N}^{i(N,N)}(z)\ldots)),
\end{displaymath}
and $a_{m,l}^{n}(z)$ is defined as the $n$th iterate
of $a_{m,l}(z)$ with $a^{0,0,\ldots,0}(z)=z$. Iterating $n$ times \eqref{a13} yields
\begin{equation}
    \begin{array}{rl}
        \tilde{F}(z)=&\sum_{\sum_{l,m=1}^{N}i(l,m)=n+1}L_{i(1,1),i(1,2),\ldots,(N,N)}(z)\tilde{F}(a^{i(1,1),i(1,2),\ldots,i(N,N)}(z))\vspace{2mm}\\ 
        &+\sum_{k=0}^{n}\sum_{\sum_{l,m=1}^{N}i(l,m)=k}L_{i(1,1),i(1,2),\ldots,i(N,N)}(z)\tilde{K}(a^{i(1,1),i(1,2),\ldots,(N,N)}(z)),
    \end{array}
\end{equation}
where the matrix functions $L_{i(1,1),i(1,2),\ldots,i(N,N)}(z)$ are derived recursively by
\begin{equation}
    L_{i(1,1),i(1,2),\ldots,i(N,N)}(z)=\sum_{u=1}^{N}\sum_{v=1}^{N}L_{i(1,1),\ldots,i(u,v)-1,\ldots,i(N,N)}(z)G(a^{i(1,1),\ldots,i(u,v)-1,\ldots,(N,N)})T_{v,l},
\end{equation}
with $L_{0,0,\ldots,0}(z)=I$. Using similar arguments as in Theorem \ref{th1} we have the following result.
\begin{lemma}
    The vector $\tilde{F}(z)$ is given by
    \begin{equation}
    \begin{array}{rl}
        \tilde{F}(z)=&\lim_{n\to\infty}\sum_{\sum_{l,m=1}^{N}i(l,m)=n+1}L_{i(1,1),i(1,2),\ldots,(N,N)}(z)\tilde{\pi}\vspace{2mm}\\ 
        &+\sum_{k=0}^{\infty}\sum_{\sum_{l,m=1}^{N}i(l,m)=k}L_{i(1,1),i(1,2),\ldots,i(N,N)}(z)\tilde{K}(a^{i(1,1),i(1,2),\ldots,(N,N)}(z)),
    \end{array}\label{solb}
\end{equation}
\end{lemma}

We still need to derive $\tilde{q}_{-1}$. This can be done by substituting $z=\bar{a}_{i,j}$ in \eqref{solb} and substituting the $j$th component of the derived $\tilde{F}(\bar{a}_{i,j})$ in the expression for $q_{-1,j}$ given in Theorem \ref{thq}.

\subsection{The Markov modulated $M/G/1$-type reflected autoregressive process}\label{mmc}
Consider an M/G/1-type queue with Markov modulated arrivals and services. Let $\{X(t);t\geq 0\}$ be the background process that dictates the arrivals and services. $\{X(t);t\geq 0\}$ is a Markov chain on $E=\{1,2,\ldots,N\}$ with infinitesimal generator $Q=(q_{i,j})_{i,j\in E}$, and denote its stationary distribution by $\widehat{\pi}=(\pi_{1},\ldots,\pi_{N})$, i.e., $\widehat{\pi}Q=0$, and $\widehat{\pi} \mathbf{1}=1$, where $\mathbf{1}$ is the $N\times 1$ column vector with all components equal to 1. Denote also by $I$ the $N\times N$ identity matrix, and by $M^{T}$ the transpose of the matrix $M$. 

Customers arrive at time epochs $T_{1},T_{2},\ldots$, $T_{1}=0$, and service times are denoted by $S_{1},S_{2},\ldots$. If $X(t)=i$, arrivals occur according to a Poisson process with rate $\lambda_{i}>0$ and an arriving customer has a service time $S_{i}$, with cumulative distribution function (cdf) $B_{i}(.)$, LST $\beta_{i}^{*}(s):=\int_{0}^{\infty}e^{-st}dB_{i}(t)$, $\bar{b}_{i}=-\beta_{i}^{*\prime}(0)$, $i=1,\ldots,N$, and $B^{*}(s):=diag(\beta_{1}^{*}(s),\ldots,\beta_{N}^{*}(s))$. We assume that given the state of the background process $\{X(t);t\geq 0\}$, $S_{1},S_{2},\ldots$ are independent and independent of the arrival process. Let $A_{n}=T_{n}-T_{n-1}$, $n=2,3,\ldots$, and $Y_{n}=X_{T_{n}}$, $n=1,2,\ldots$. We assume that such an arrival makes obsolete a fixed fraction $1-a_{i}$, given that $Y_{n}=i$, $i\in E$. Denote now by $V_{n}(Y_{n})$ a random variable with support $\{a_{1},\ldots,a_{N}\}$, with $a_{i}\in (0,1)$, and such that $P(V_{n}(Y_{n})=a_{i}|Y_{n}=i)=1$. 

Our focus is on the derivation of the transient distribution (in terms of LSTs) of the process $\{(W_{n},T_{n});n=1,2,\ldots\}$ in which $\{T_n;n=1,2,\ldots\}$ is an increasing time sequence generated by the
input process, and $W_{n}$ denotes the workload in the system just before the $n$th customer arrival that takes place at $T_{n}$. This model generalizes the work in \cite[Section 5]{dimitriou2024markov}, in which the author considered the case where the autoregressive parameter is independent of the state of the background state. 

Assume that $W_{1}=w$ and let for $Re(s)\geq 0$, $Re(\eta)\geq 0$, $|r|<1$,
\begin{displaymath}
    z_{j}^{w}(r,s,\eta):=\sum_{n=1}^{\infty}r^{n}\mathbb{E}(e^{-s W_{n}-\eta T_{n}}1_{\{Y_{n}=j\}})|W_{1}=w),\,j=1,\ldots,N,
\end{displaymath}
and $\tilde{Z}^{w}(r,s,\eta)=(z_{1}^{w}(r,s,\eta),\ldots,z_{N}^{w}(r,s,\eta))^{T}$. Let also
\begin{displaymath}
    A_{i,j}(s)=\mathbb{E}(e^{-s A_{n}}1_{\{Y_{n}=j\}}|Y_{n-1}=i),\,i,j\in E,
\end{displaymath}
and denote by $A(s)$ the $N\times N$ matrix with elements $A_{i,j}(s)$, $Re(s)\geq 0$. Following the lines in \cite[Lemma 2.1]{regte}, we have that $A(s)=M^{-1}(s)\Lambda$ where $\Lambda:=diag(\lambda_{1},\ldots,\lambda_{N})$ and $M(s)=s I+\Lambda-Q$. Let also
\begin{displaymath}
    v_{j}^{w}(r,s,\eta):=\sum_{n=1}^{\infty}r^{n+1}\mathbb{E}((1-e^{-s [V_{n} W_{n}+S_{n}-A_{n+1}]^{-}})e^{-\eta T_{n+1}}1_{\{Y_{n+1}=j\}})|W_{1}=w),\,j=1,\ldots,N,
\end{displaymath}
with $\tilde{V}^{w}(r,s,\eta):=(v_{1}^{w}(r,s,\eta),\ldots,v_{N}^{w}(r,s,\eta))^{T}$, and let $p_{j}:=P(X_{0}=j)$, $j=1,\ldots,N$ with $\widehat{p}:=(p_{1},\ldots,p_{N})^{T}$.
\begin{theorem}\label{th1x}
    For $Re(s)=0$, $Re(\eta)\geq 0$, $|r|<1$,
    \begin{equation}
        z^{w}_{j}(r,s,\eta)-rp_{j}e^{-s w}=r\sum_{i=1}^{N}z^{w}_{i}(r,a_{i}s,\eta)\beta_{i}^{*}(s)A_{i,j}(\eta-s)+v_{j}^{w}(r,s,\eta),\,j=1,2,\ldots,N,
        \label{eq1}
    \end{equation}
    or equivalently, in matrix notation,
    \begin{equation}
        \tilde{Z}^{w}(r,s,\eta)-r\Lambda(M^{T}(\eta-s))^{-1}B^{*}(s)\sum_{i=1}^{N}\tilde{P}^{(i)}\tilde{Z}^{w}(r,a_{i}s,\eta)=re^{-s w}\widehat{p}+\tilde{V}^{w}(r,s,\eta),\label{eq2}
    \end{equation}
where $\tilde{P}^{(i)}:=(\tilde{P}^{(i)})_{p,q}$, $i,p,q\in E$ is an $N\times N$ matrix, with the element $\tilde{P}^{(i)}_{i,i}=1$, and all the other elements $\tilde{P}^{(i)}_{p,q}=0$, $p,q\neq i$. Note that $\sum_{i=1}^{N}\tilde{P}^{(i)}=I$.
\end{theorem}
\begin{proof}
    Using the identity (where $x^{+}:=max(x,0)$, $x^{-}:=min(x,0)$),
    \begin{equation}
        e^{-s x^{+}}+e^{-s x^{-}}=e^{-s x}+1,\label{iden}
    \end{equation}
    we have for $Re(s)=0$, $Re(\eta)\geq 0$, $|r|<1$,
    \begin{displaymath}
        \begin{array}{l}
            \mathbb{E}(e^{-s W_{n+1}-\eta T_{n+1}}1_{\{Y_{n+1}=j\}}|W_{1}=w)= \mathbb{E}(e^{-s [V_{n} W_{n}+S_{n}-A_{n+1}]^{+}-\eta T_{n+1}}1_{\{Y_{n+1}=j\}})|W_{1}=w)\vspace{2mm}  \\
            =  \mathbb{E}((e^{-s [V_{n} W_{n}+S_{n}-A_{n+1}]}+1-e^{-s [V_{n} W_{n}+S_{n}-A_{n+1}]^{-}})e^{-\eta T_{n+1}}1_{\{Y_{n+1}=j\}})|W_{1}=w)\vspace{2mm}\\
            =  \mathbb{E}(e^{-s [V_{n} W_{n}+S_{n}-A_{n+1}]-\eta T_{n+1}}1_{\{Y_{n+1}=j\}})|W_{1}=w)
            +\mathbb{E}((1-e^{-s [V_{n} W_{n}+S_{n}-A_{n+1}]^{-}})e^{-\eta T_{n+1}}1_{\{Y_{n+1}=j\}}|W_{1}=w).
        \end{array}
    \end{displaymath}
    Note that
    \begin{displaymath}
        \begin{array}{r}
            \mathbb{E}(e^{-s[V_{n} W_{n}+S_{n}-A_{n+1}]-\eta T_{n+1}}1_{\{Y_{n+1}=j\}}|W_{1}=w) =\mathbb{E}(e^{-s [V_{n} W_{n}+S_{n}-A_{n+1}]-\eta (A_{n+1}+T_{n})}1_{\{Y_{n+1}=j\}}|W_{1}=w)  \vspace{2mm}\\
            = \mathbb{E}(e^{-s V_{n} W_{n}-\eta T_{n}}e^{-s S_{n}-(\eta-s)A_{n+1}}1_{\{Y_{n+1}=j\}}|W_{1}=w) \vspace{2mm}\\
        =\sum_{i=1}^{N}\mathbb{E}(e^{-s a_{i} W_{n}-\eta T_{n}}1_{\{Y_{n}=i\}}|W_{1}=w)\beta_{i}^{*}(s)A_{i,j}(\eta-s)
        \end{array}
    \end{displaymath}
    Thus, for $Re(s)=0$, $Re(\eta)\geq 0$, $|r|<1$,
    \begin{equation}
        \begin{array}{rl}
         \mathbb{E}(e^{-s W_{n+1}-\eta T_{n+1}}1_{\{Y_{n+1}=j\}}|W_{1}=w)=    & \sum_{i=1}^{N}\mathbb{E}(e^{-s a_{i} W_{n}-\eta T_{n}}1_{\{Y_{n}=i\}}|W_{1}=w)\beta_{i}^{*}(s)A_{i,j}(\eta-s)\vspace{2mm} \\
         &+\mathbb{E}((1-e^{-s [V_{n} W_{n}+S_{n}-A_{n+1}]^{-}})e^{-\eta T_{n+1}}1_{\{Y_{n+1}=j\}}|W_{1}=w).
        \end{array}\label{rty}
    \end{equation}
    Multiplying \eqref{rty} by $r^{n+1}$ and taking the sum of $n=1$ to infinity gives:
    \begin{displaymath}
        z^{w}_{j}(r,s,\eta)-r \mathbb{E}(e^{-s W_{1}-\eta T_{1}}1_{\{Y_{1}=j\}}|W_{1}=w)=r\sum_{i=1}^{N}z^{w}_{i}(r,a_{i}s,\eta)\beta_{i}^{*}(s)A_{i,j}(\eta-s)+v_{j}^{w}(r,s,\eta).
    \end{displaymath}
    Note that $T_{1}=0$ and,
    \begin{displaymath}
        \mathbb{E}(e^{-s W_{1}-\eta T_{1}}1_{\{Y_{1}=j\}}|W_{1}=w)=\mathbb{E}(e^{-s w}1_{\{X_{0}=j\}})=e^{-s w}P(X_{0}=j)=e^{-s w}p_{j}.
    \end{displaymath}
   Substituting back in \eqref{rty}, we obtain the system of Wiener-Hopf equations \eqref{eq1}, which in matrix notation, is equivalent to \eqref{eq2}. 
\end{proof}

The following lemma is taken from \cite{dimitriou2024markov}; see also \cite{regte}.
\begin{lemma}
\begin{enumerate}
    \item The $N$ eigenvalues, say $\nu_{i}$, $i=1\ldots,N$, of $\Lambda-Q^{T}$ all lie in $Re(s)>0$.
    \item The $N$ zeros of $det((\eta-s)I+\Lambda-Q^{T})=0$ for $Re(\eta)\geq 0$, say $\mu_{i}(\eta)$, $i=1,\ldots,N$, are all in $Re(s)>0$ (i.e., For $Re(s)=0$, $Re(\eta)\geq 0$, $det((\eta-s)I+\Lambda-Q^{T})\neq 0$), and such that $\mu_{i}(\eta)=\nu_{i}+\eta$, $i=1,\ldots,N$. 
\end{enumerate}
\end{lemma}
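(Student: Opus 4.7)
The plan is to read off both parts of the lemma from the Gershgorin circle theorem applied to $\Lambda-Q^{T}$ (equivalently to $\Lambda-Q$, which has the same spectrum). The only properties needed are that $Q$ is a finite-state irreducible generator, so $q_{ii}=-\sum_{j\neq i}q_{ij}\leq 0$ and $q_{ij}\geq 0$ for $j\neq i$, together with $\lambda_{i}>0$ for all $i$.

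For part 1, I would work with the rows of $\Lambda-Q$. In row $i$, the diagonal entry equals $\lambda_{i}-q_{ii}=\lambda_{i}+\sum_{j\neq i}q_{ij}$, while the off-diagonal entries are $-q_{ij}\leq 0$ with $\sum_{j\neq i}|-q_{ij}|=-q_{ii}$. The Gershgorin disc of row $i$ is therefore the closed disc of radius $-q_{ii}$ centered at $\lambda_{i}-q_{ii}$, which is contained in the half-plane $\{z:Re(z)\geq\lambda_{i}\}\subset\{z:Re(z)>0\}$. Taking the union over $i$, every eigenvalue $\nu_{i}$ of $\Lambda-Q^{T}$ satisfies $Re(\nu_{i})>0$.

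For part 2, I would observe that $\det((\eta-s)I+\Lambda-Q^{T})$ is, up to sign, a monic polynomial in $s-\eta$ of degree $N$, and $\det((\eta-s)I+\Lambda-Q^{T})=0$ is equivalent to $s-\eta$ being an eigenvalue of $\Lambda-Q^{T}$. Hence the $N$ zeros in $s$ are exactly $\mu_{i}(\eta)=\nu_{i}+\eta$, $i=1,\ldots,N$. Combined with part 1 and the assumption $Re(\eta)\geq 0$, this gives $Re(\mu_{i}(\eta))=Re(\nu_{i})+Re(\eta)>0$; in particular, the determinant never vanishes on $Re(s)=0$ whenever $Re(\eta)\geq 0$.

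There is no substantive obstacle: the argument is a one-shot application of Gershgorin followed by a one-line determinantal identity. The only points to watch are the transposition (row estimates on $\Lambda-Q$ are cleaner than column estimates on $\Lambda-Q^{T}$, but give the same spectrum) and the sign change $(\eta-s)\mapsto(s-\eta)$ when reading off the eigenvalue condition from the determinant. Since the lemma is cited from \cite{dimitriou2024markov} and \cite{regte}, I would expect the paper either to quote it or to record essentially this short proof.
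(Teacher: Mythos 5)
Your proof is correct, but it reaches the conclusion by a different route than the paper. The paper decomposes $\Lambda-Q^{T}-sI$ into its diagonal part $R(s)=diag(\lambda_{1}+q_{1}-s,\ldots,\lambda_{N}+q_{N}-s)$ plus the off-diagonal part $S(s)$, verifies the strict dominance $|\lambda_{i}+q_{i}-s|>q_{i}=\sum_{j}|s_{i,j}(s)|$, and then invokes a matrix analogue of Rouch\'e's theorem (cited from an appendix of \cite{smit}) to conclude that $det(\Lambda-Q^{T}-sI)$ has as many zeros in $Re(s)>0$ as $det(R(s))=\prod_{i}(\lambda_{i}+q_{i}-s)$, namely $N$. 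You instead localize the spectrum directly with Gershgorin applied to the rows of $\Lambda-Q$: the disc of row $i$ is centered at $\lambda_{i}+q_{i}$ with radius $q_{i}$, hence lies in $\{Re(z)\geq\lambda_{i}\}\subset\{Re(z)>0\}$, and transposition preserves the spectrum. The underlying estimate is the same strict diagonal dominance in both cases, but your argument is more elementary and self-contained --- it avoids the external zero-counting theorem and also gives the count of $N$ eigenvalues for free, since Gershgorin accounts for the whole spectrum. It is also consistent in style with the paper's own later lemma on $\Lambda(I-P^{T})$, which is proved by Gershgorin. Your treatment of part 2 (the zeros in $s$ of $det((\eta-s)I+\Lambda-Q^{T})$ are exactly the shifted eigenvalues $\mu_{i}(\eta)=\nu_{i}+\eta$, so $Re(\mu_{i}(\eta))=Re(\nu_{i})+Re(\eta)>0$) coincides with the paper's one-line observation. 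No gaps.
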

\begin{proof}
Note that $\Lambda-Q^{T}-s I:=S(s)+R(s)$, where $R(s)=diag(\lambda_{1}+q_{1}-s,\ldots,\lambda_{N}+q_{N}-s)$, and $S(s):=(s_{i,j}(s))_{i,j=1,\ldots,N}:=\begin{pmatrix}
0&q_{2,1}&\ldots&q_{N,1}\\
q_{1,2}&0&\ldots&q_{N,2}\\
\vdots&\vdots&\vdots\\
q_{1,N}&q_{2,N}&\ldots&0\\
\end{pmatrix}$. Moreover,
\begin{displaymath}
    |\lambda_{i}+q_{i}-s|\geq \lambda_{i}+q_{i}-|s|>q_{i}=|-q_{i,i}|=\sum_{j\neq i}|q_{i,j}|=\sum_{j=1}^{N}|s_{i,j}(s)|.
\end{displaymath}
Thus, from \cite[Theorem 1, Appendix 2]{smit}, for $Re(s)>0$, the number of zeros of $det(\Lambda-Q^{T}-s I)$ are equal to the number of zeros of $det(R(s))=\prod_{i=1}^{N}(\lambda_{i}+q_{i}-s)$. Assume now that all $\nu_{i}$ are distinct.

Similarly, for $Re(\eta)\geq 0$, $Re(s)=0$, $det(\Lambda-Q^{T}+(\eta-s) I)\neq 0$ (thus, the inverse of $M^{T}(\eta-s)$ exists), and all the zeros of $det(\Lambda-Q^{T}+(\eta-s) I)$ lie in $Re(s)>0$. It is easy to realize that these zeros, say $\mu_{i}(\eta)$, are such that $\mu_{i}(\eta)=\nu_{i}+\eta$, $i=1,\ldots,N$, where $\nu_{i}$ the eigenvalues of $\Lambda-Q^{T}$. 
\end{proof}

Thus,
\begin{displaymath}
    (M^{T}(\eta-s))^{-1}=\frac{1}{\prod_{i=1}^{N}(s-\mu_{i}(\eta))}L(\eta-s),
\end{displaymath}
where $L(\eta-s):=cof(M^{T}(\eta-s))$ is the cofactor matrix of $M^{T}(\eta-s)$. 
\begin{remark}
    Note that $(M^{T}(\eta-s))^{-1}$ can also be written as (see \cite[equation (3.16)]{regte}):
\begin{displaymath}
    (M^{T}(\eta-s))^{-1}=R\ diag(\frac{1}{\mu_{1}(\eta)-s},\ldots,\frac{1}{\mu_{N}(\eta)-s})R^{-1},
\end{displaymath}
where $R$ is the matrix with the $i$th column, say $R_{i}$, $i=1,\ldots,N$, being the right eigenvector of $\Lambda-Q^{T}$ corresponding to the eigenvalue $\nu_{i}$.
\end{remark}

    Let $g_{i,j}(\eta,s):=(\Lambda L(\eta-s)B^{*}(s))_{i,j}$, $i,j=1,\ldots,N$. Then, \eqref{eq2} can be written as:
    \begin{equation}
        \prod_{i=1}^{N}(s-\mu_{i}(\eta))[z_{j}^{w}(r,s,\eta)-re^{-s w}p_{j}]-r\sum_{i=1}^{N}z_{i}^{w}(r,a_{i}s,\eta)g_{i,j}(\eta,s)=\prod_{i=1}^{N}(s-\mu_{i}(\eta))v_{j}^{w}(r,s,\eta).\label{eq3s}
    \end{equation}
    Note that for $|r|<1$, $Re(\eta)\geq 0$, 
\begin{itemize}
\item The left-hand side of \eqref{eq3s} is analytic in $Re(s)>0$, continuous in $Re(s)\geq 0$, and it is also bounded.
    \item The right-hand side of \eqref{eq3s} is analytic in $Re(s)<0$, continuous in $Re(s)\leq 0$, and it is also bounded.
\end{itemize}
Thus, Liouville's theorem \cite[Th. 2.52]{tit}, implies that, in their respective half planes, both the left and the right hand side of \eqref{eq3s} can be rewritten as a polynomial of at most $N$th degree in $s$, dependent of $r$, $\eta$, for large $s$: For $|r|<1$, $Re(\eta)\geq 0$, $Re(s)\geq 0$:
\begin{equation}
        \prod_{i=1}^{N}(s-\mu_{i}(\eta))[z_{j}^{w}(r,s,\eta)-re^{-s w}p_{j}]-r\sum_{i=1}^{N}z_{i}^{w}(r,a_{i}s,\eta)g_{i,j}(\eta,s)=\sum_{i=0}^{N}s^{i}c_{i,j}^{w}(r,\eta).\label{eq3ss}
    \end{equation}
    Note that for $s=0$, \eqref{eq3ss} (having in mind that $G_{i,j}(\eta,0)=A_{i,j}(\eta)$) yields
\begin{equation*}
    (-1)^{N}\prod_{i=1}^{N}\mu_{i}(\eta)[z_{j}^{w}(r,0,\eta)-rp_{j}]-r\sum_{i=1}^{N}z_{i}^{w}(r,0,\eta)A_{i,j}(\eta)=c^{w}_{0,j}(r,\eta).\label{al11}
\end{equation*}
However, from \eqref{eq3s}, for $s=0$, 
\begin{equation*}
    (-1)^{N}\prod_{i=1}^{N}\mu_{i}(\eta)[z_{j}^{w}(r,0,\eta)-rp_{j}]-r\sum_{i=1}^{N}z_{i}^{w}(r,0,\eta)A_{i,j}(\eta)=0,\label{sl2}
\end{equation*}
since $v^{w}_{j}(r,0,\eta)=0$, $j=1,\ldots,N$. Thus, $c_{0,j}^{w}(r,\eta)=0$, so that $C_{0}^{w}(r,\eta)=(c_{0,1}^{w}(r,\eta),\ldots,c_{0,N}^{w}(r,\eta))^{T}=\tilde{0}$.

\begin{remark}
This result may also be derived as follows. Note that, \eqref{eq2} is now written for $|r|<1$, $Re(s)=0$, $Re(\eta)\geq 0$ as:
\begin{equation}
        \prod_{i=1}^{N}(s-\mu_{i}(\eta))[\tilde{Z}^{w}(r,s,\eta)-re^{-s w}\widehat{p}]-r\Lambda L(\eta-s)B^{*}(s)\sum_{i=1}^{N}\tilde{P}^{(i)}\tilde{Z}^{w}(r,a_{i}s,\eta)=\prod_{i=1}^{N}(s-\mu_{i}(\eta))\tilde{V}^{w}(r,s,\eta).\label{eq3}
    \end{equation}

Note that for $|r|<1$, $Re(\eta)\geq 0$, 
\begin{itemize}
\item The left-hand side of \eqref{eq3} is analytic in $Re(s)>0$, continuous in $Re(s)\geq 0$, and it is also bounded.
    \item The right-hand side of \eqref{eq3} is analytic in $Re(s)<0$, continuous in $Re(s)\leq 0$, and it is also bounded since $|E(e^{-s[R_{n} W_{n}+S_{n}-A_{n+1}]^{-}-\eta T_{n+1}}1(Y_{n+1}=j)|W_{1}=w)|\leq 1$, $Re(s)\leq 0$, $Re(\eta)\geq 0$.
\end{itemize}
Thus, by analytic continuation we can define an entire function such that it is equal to the left-hand side of \eqref{eq3} for $Re(s)\geq 0$, and equal to the right-hand side of \eqref{eq3} for $Re(s)\leq 0$ (with $|r|<1$, $Re(\eta)\geq 0$). Hence, by (a variant of) Liouville's theorem \cite{paman} (for vector-valued functions; see also \cite[p. 81, Theorem 3.32]{rudin} or \cite[p. 232, Theorem 9.11.1]{die}, or \cite[p. 113, Theorem 3.12]{allan}) behaves as a polynomial of at most $N$th degree in $s$. Thus, for $Re(\phi)\geq 0$,
\begin{equation}
    \prod_{i=1}^{N}(s-\mu_{i}(\eta))[\tilde{Z}^{w}(r,s,\eta)-re^{-s w}\widehat{p}]-r\Lambda L(\eta-s)B^{*}(s)\sum_{i=1}^{N}\tilde{P}^{(i)}\tilde{Z}^{w}(r,a_{i}s,\eta)=\sum_{i=0}^{N}s^{i}C^{w}_{i}(r,\eta),\label{eq4}
\end{equation}
where $C^{w}_{i}(r,\eta):=(c^{w}_{i,1}(r,\eta),\ldots,c^{w}_{i,N}(r,\eta))^{T}$, $i=0,1,\ldots,N$, the column vectors still have to be determined. 

Note that for $s=0$, \eqref{eq4} yields
\begin{equation}
    (-1)^{N}\prod_{i=1}^{N}\mu_{i}(\eta)[(I-r\Lambda(M^{T}(\eta))^{-1})\tilde{Z}^{w}(r,0,\eta)-r\widehat{p}]=C^{w}_{0}(r,\eta).\label{l11}
\end{equation}
However, from \eqref{eq3}, for $s=0$, 
\begin{equation}
    (-1)^{N}\prod_{i=1}^{N}\mu_{i}(\eta)[(I-r\Lambda(M^{T}(\eta))^{-1})\tilde{Z}^{w}(r,0,\eta)-r\widehat{p}]=\tilde{0},\label{l2}
\end{equation}
since $\tilde{V}(r,0,\eta)=\tilde{0}$, where $\tilde{0}$, is an $N\times 1$ column vector with all components equal to 0. Thus, $C^{w}_{0}(r,\eta)=\tilde{0}$. 

   % Note that \eqref{eq4} has the same form as eq. (50) in \cite{adan}, although it is in matrix form. Moreover, \eqref{eq3} is the matrix analogue of \cite[equation (48)]{adan}, having, for $|r|<1$, $Re(\eta)\geq 0$, the form (see also \cite[equation (2)]{adan})
   % \begin{equation}
    %    \mathbf{f}(r,s,\eta)=\mathbf{g}(r,s,\eta)\sum_{i=1}^{N}\tilde{P}^{(i)}\mathbf{f}(r,\alpha_{i}s,\eta)+\mathbf{K}(r,s,\eta),
    %\end{equation}
%with $\mathbf{g}(r,s,\eta):=rA^{T}(\eta-s)B^{*}(s)$, $\mathbf{K}(r,s,\eta):=re^{-s w}\widehat{p}+\tilde{V}^{w}(r,s,\eta)$. Note that $\mathbf{g}(r,0,\eta)=r\Lambda(M^{T}(\eta))^{-1}=rA^{T}(\eta)$, $\mathbf{K}(r,0,\eta)=r\widehat{p}\neq \mathbf{0}$. Note that for $|r|<1$, $Re(\eta)\geq 0$ $|\mathbf{g}_{i,j}(r,0,\eta)|<1$, $|\mathbf{K}_{i,j}(r,0,\eta)|<1$, $i,j\in E$. Moreover, if for a matrix $M=(M_{i,j})_{i,j\in E}$, $||M||=max_{1\leq i\leq N}\sum_{j=1}^{N}|M_{i,j}|$, then $||\mathbf{g}(r,0,\eta)||<1$,  $||\mathbf{K}(r,0,\eta)||<1$. 

%It seems that a matrix generalization of \cite[Theorem 2]{adan} can be applied to solve such kind of functional equations.
\end{remark}

Therefore, for $Re(s)\geq 0$, $Re(\eta)\geq 0$, $|r|<1$,
\begin{equation} 
    \tilde{Z}^{w}(r,s,\eta)=rK^{w}(s,\eta)\sum_{i=1}^{N}\tilde{P}^{(i)}\tilde{Z}^{w}(r,a_{i}s,\eta)+L^{w}(r,s,\eta),\label{basic}
\end{equation}
where
\begin{displaymath}
    \begin{array}{rl}
       K^{w}(s,\eta):=&\Lambda (M^{T}(\eta-s))^{-1}B^{*}(s)=rA^{T}(\eta-s)B^{*}(s), \vspace{2mm}\\
        L^{w}(r,s,\eta):=  & re^{-s w}\widehat{p}+\frac{1}{\prod_{i=1}^{N}(s-\mu_{i}(\eta))}\sum_{i=1}^{N}s^{i}C_{i}^{w}(r,\eta).
    \end{array}
\end{displaymath}
Iterating \eqref{basic} $n$ times yields
\begin{equation}
\begin{array}{rl}
    \tilde{Z}^{w}(r,s,\eta)=&\sum_{k=0}^{n}\sum_{i_{1}+\ldots+i_{N}=k}F_{i_{1},\ldots,i_{M}}(r,s,\eta)L^{w}(r,\alpha_{i_{1},\ldots,i_{N}}(s),\eta)\vspace{2mm}\\&+\sum_{i_{1}+\ldots+i_{N}=n+1}F_{i_{1},\ldots,i_{N}}(r,s,\eta)\tilde{Z}^{w}(r,\alpha_{i_{1},\ldots,i_{N}}(s),\eta),
    \end{array}\label{iter}
\end{equation}
where $\alpha_{i_{1},\ldots,i_{N}}(s)=\alpha_{1}^{i_{1}}(\alpha_{2}^{i_{2}}(\ldots(\alpha_{N}^{i_{N}}(s))\ldots))$ and $\alpha_{i}^{n}(s)$ denotes the $n$th iterate of $\alpha_{i}(s)=a_{i}s$, with $\alpha_{0,\ldots,0}(s)=s$, and the functions $F_{i_{1},\ldots,i_{N}}(r,s,\eta)$ are recursively defined by
\begin{displaymath}
    F_{i_{1},\ldots,i_{N}}(r,s,\eta)=r\sum_{k=1}^{N}F_{i_{1},\ldots,i_{k}-1,\ldots,i_{N}}(r,s,\eta)K^{w}(\alpha_{i_{1},\ldots,i_{k}-1,\ldots,i_{N}}(s),\eta)\tilde{P}^{(k)},
\end{displaymath}
with $F_{0,\ldots,0}(r,s,\eta):=I$, and $F_{i_{1},\ldots,i_{N}}(r,s,\eta)=O$ (that is, the zero matrix) if one of the indices is equal to $-1$. It is clear that \eqref{basic} is of the form \eqref{basic0}, with $U(s,\eta):=K^{w}(s,\eta)$, and $K(r,s,\eta):=L^{w}(r,s,\eta)$, and where the contractions $\alpha_{i}(s)$ have a common fixed-point $a=0$. Moreover, as $i_{1}+\ldots+i_{N}\to\infty$, $K^{w}(\alpha_{i_{1},\ldots,i_{N}}(s),\eta)\to A^{T}(\eta)$, and $L^{w}(r,\alpha_{i_{1},\ldots,i_{N}}(s),\eta)\to r\widehat{p}$.

 Note also that $\lim_{n\to\infty}\tilde{Z}^{w}(r,a^{n}s,\eta)=\tilde{Z}^{w}(r,0,\eta)$ satisfies for $|r|<1$, $Re(\eta)\geq 0$: 
\begin{displaymath}
    (I-r\Lambda(M^{T}(\eta))^{-1})\tilde{Z}^{w}(r,0,\eta)=r\widehat{p}\Leftrightarrow(I-rA^{T}(\eta))\tilde{Z}^{w}(r,0,\eta)=r\widehat{p} \Leftrightarrow\tilde{Z}^{w}(r,0,\eta)=r(I-rA^{T}(\eta))^{-1}\widehat{p},
    \end{displaymath}
    since the eigenvalues of $rA^{T}(\eta)$ are strictly less than one when $Re(\eta)\geq 0$. So, by applying Theorem \ref{tranth} we have
    \begin{equation}
\begin{array}{rl}
    \tilde{Z}^{w}(r,s,\eta)=&\sum_{n=0}^{\infty}\sum_{i_{1}+\ldots+i_{N}=n}F_{i_{1},\ldots,i_{M}}(r,s,\eta)L^{w}(r,\alpha_{i_{1},\ldots,i_{N}}(s),\eta).
    \end{array}\label{soll}
\end{equation}
    
   Note that in \eqref{soll}, there are still $N^{2}$ unknowns to be determined, i.e., the terms $c_{i,j}^{w}(r,\eta)$, $i,j=1,\ldots,N$. These terms can be derived by applying the following steps: 
\begin{itemize}
    \item  Substituting $s=\mu_{k}(\eta)$, $k=1,\ldots,N$ in \eqref{eq3ss} yields a $N^{2}\times N^{2}$ system of equations for the elements of $C^{w}_{j}(r,\eta)=(c^{w}_{j,1}(r,\eta),\ldots,c^{w}_{j,N}(r,\eta))^{T}$, $j=1,\ldots,N$:
    \begin{equation}
        -r\sum_{i=1}^{N}z_{i}^{w}(r,a_{i}\mu_{k}(\eta),\eta)g_{i,j}(\eta,a_{i}\mu_{k}(\eta))=\sum_{i=1}^{N}\mu_{k}^{i}(\eta)c^{w}_{i,j}(r,\eta),\,j=1,\ldots,N,\label{conss}
    \end{equation}
    where $z_{i}^{w}(r,a_{i}\mu_{k}(\eta),\eta)$, $i=1,\ldots,N$ be the $i$th element of $\tilde{Z}^{w}(r,a_{i}\mu_{k}(\eta),\eta)$.
    \item Substitute $s=\mu_{k}(\eta)$, $k=1,\ldots,N$ in \eqref{soll} to obtain expressions for $z_{i}^{w}(r,a_{i}\mu_{k}(\eta),\eta)$, $i=1,\ldots,N$.
    \item Substituting the resulting expressions of $z_{i}^{w}(r,a_{i}\mu_{k}(\eta),\eta)$, $i=1,\ldots,N$ in \eqref{conss} we obtain an $N^{2}\times N^{2}$ system of equations for the unknowns $c_{i,j}^{w}(r,\eta)$, $i,j=1,\ldots,N$.  
\end{itemize}

\subsection{A Markovian fluid flow model with consumption}\label{fluid}
Consider a general stochastic fluid model with a single infinite capacity
buffer where the buffer content $X(t)$ increases continuously. An external background process $J(t)$, $t\geq 0$ with a finite state space $E=\{1,2,\ldots,N\}$ affects the rate of the buffer increase. In particular, when $J(t)=i$, $i\in E$, the buffer content continuously increases with rate $r_{i}\in(-\infty,+\infty)-\{0\}$ (for convenience, we assume that $r_{i}\neq 0$, $i\in E$). When $J(t)=i$ the process remains for an exponential amount of time with rate $q_{i}$, and then may move to any state $j$. At transition epochs, a fraction $1-a_{i}$ of the buffer content is instantaneously removed (i.e., it is consumed). Let $T_0 = 0$ and $T_1, T_2,\ldots$ be the transition epochs of the process $\{J(t);t\geq 0\}$ with $T_1 > 0$. Let also $W_0 = v$ and for $n = 1, 2,\ldots$, $W_n = X(T_n)$ and $Y_n = J(T_{n}^{-})$ be the state of
$\{J(t);t\geq 0\}$ just before transition epochs.

Let $A_n = T_n - T_{n-1}$ be the inter-jump time, $n = 1, 2,\ldots$, and define $R_n =\sum_{k=1}^{N}r_{k}1_{\{Y_{n}=k\}}$, $V_{n}=\sum_{k=1}^{N}a_{k}1_{\{Y_{n}=k\}}$. We then have the following recursion:
\begin{equation}
    W_{n+1}=[V_{n}W_{n}+R_{n+1}A_{n+1}]^{+}.
\end{equation}

Define for $Re(s)\geq 0$, $Re(\eta)\geq 0$, $|r|<1$, $i,j\in E$
\begin{displaymath}
    \begin{array}{rl}
         z_{i,j}(r,s,\eta,v)=&\sum_{n=1}^{\infty}r^{n}\mathbb{E}(e^{-sW_{n}-\eta T_{n}}1_{\{Y_{n}=j\}}|Y_{1}=i,W_{0}=v),  \vspace{2mm}\\
         z_{i}^{0}(s,\eta,v)=&\mathbb{E}(e^{-sW_{1}-\eta T_{1}}1_{\{Y_{1}=i\}}|Y_{1}=i,W_{0}=v),
    \end{array}
\end{displaymath}
and for $Re(s)\leq 0$,
\begin{displaymath}
    v_{i,j}(r,s,\eta,v)=\sum_{n=1}^{\infty}r^{n+1}\mathbb{E}((1-e^{-s[W_{n}-R_{n+1}A_{n+1}]^{-}})e^{-\eta (T_{n}+A_{n+1})}1_{\{Y_{n}=j\}}|Y_{1}=i,W_{0}=v),
\end{displaymath}
and for $Re(s)=0$,
\begin{displaymath}
    g_{i,j}(s,\eta)=\mathbb{E}(e^{-(sr_{j}+\eta)A_{n+1}}1_{\{Y_{n+1}=j\}}|Y_{n}=i).
\end{displaymath}
Let $Z(r,s,\eta,v)$, $G(s,\eta)$, $V(r,s,\eta,v)$ be $N\times N$ matrices with elements $z_{i,j}(r,s,\eta,v)$, $g_{i,j}(s,\eta)$ and $v_{i,j}(r,s,\eta,v)$, respectively. Let also $Z^{0}(s,\eta,v)=diag(z_{1}^{0}(s,\eta,v),\ldots,z_{N}^{0}(s,\eta,v))$. Note that
\begin{displaymath}
\begin{array}{rl}
     z_{i}^{0}(s,\eta,v)=&E(e^{-sW_{1}-\eta T_{1}}1_{\{Y_{1}=i\}}|Y_{1}=i,W_{0}=v)=\int_{0}^{\infty}e^{-s[v+r_{i}u]^{+}-\eta u}q_{i}e^{-q_{i}u}du.
\end{array}
\end{displaymath}
 For $r_{i}>0$,
 \begin{displaymath}
     [v+r_{i}u]^{+}=\left\{\begin{array}{ll}
          v+ur_{i},&u\geq -v/r_{i}\Leftrightarrow u\geq 0,  \\
          0,&u\leq 0, 
     \end{array}\right.
 \end{displaymath}
 and for $r_{i}<0$,
 \begin{displaymath}
     [v+r_{i}u]^{+}=\left\{\begin{array}{ll}
          v+ur_{i},&u\leq -v/r_{i},  \\
          0,&u\geq -v/r_{i}.
     \end{array}\right.
 \end{displaymath}
 Simple calculations imply that
\begin{displaymath}
    z_{i}^{0}(s,\eta,v)=\left\{\begin{array}{ll}
         e^{-sv}\frac{q_{i}}{q_{i}+sr_{i}+\eta},& r_{i}>0 \\
         \frac{q_{i}}{q_{i}+sr_{i}+\eta}[e^{-sv}-e^{-(\eta+q_{i})v/r_{i}}]+\frac{q_{i}e^{-(\eta+q_{i})v/r_{i}}}{\eta+q_{i}},&r_{i}<0. 
    \end{array}\right.
\end{displaymath}
 Note also that,
\begin{displaymath}
    G(s,\eta)=P\widehat{Q}(\widehat{Q}+\widehat{R}+\eta I)^{-1},
\end{displaymath}
where $P$ is the one-step probability matrix of $\{Y_{n};n\in\mathbb{N}\}$, and $\widehat{Q}:=diag(q_{1},q_{2},\ldots,q_{N})$, $\widehat{R}:=diag(r_{1},r_{2},\ldots,r_{N})$.

Then, we have the following result.
\begin{theorem}
    For $Re(s)=0$, $Re(\eta)\geq 0$, $|r|<1$,
    \begin{equation}
        \begin{array}{r}
             [z_{i,j}(r,s,\eta,v)-\delta_{i,j}z_{i}^{0}(s,\eta,v)](r_{j}s+\eta+q_{j})-r\sum_{k=1}^{N}z_{i,k}(r,a_{k}s,\eta,v)p_{k,j}q_{j}\vspace{2mm}\\=(r_{j}s+\eta+q_{j})v_{i,j}(r,s,\eta,v).
        \end{array}\label{v11}
    \end{equation}
\end{theorem}
\begin{proof}
Using the identity \eqref{iden}, we have for $Re(s)=0$, $Re(\eta)\geq 0$, $|r|<1$,
\begin{displaymath}
        \begin{array}{l}
            \mathbb{E}(e^{-s W_{n+1}-\eta T_{n+1}}1_{\{Y_{n+1}=j\}}|Y_{1}=i,W_{0}=v)= \mathbb{E}(e^{-s [V_{n} W_{n}+R_{n+1}A_{n+1}]^{+}-\eta (T_{n}-A_{n+1})}1_{\{Y_{n+1}=j\}})|Y_{1}=i, W_{0}=v)\vspace{2mm}  \\
            =  \mathbb{E}(e^{-s V_{n} W_{n}-\eta T_{n}-(sR_{n+1}+\eta)A_{n+1}}1_{\{Y_{n+1}=j\}})|Y_{1}=i,W_{0}=v)\vspace{2mm}  \\
            +\mathbb{E}((1-e^{-s [V_{n} W_{n}+R_{n+1}A_{n+1}]^{-}})e^{-\eta (T_{n}+A_{n+1})}1_{\{Y_{n+1}=j\}}|Y_{1}=i,W_{0}=v)\vspace{2mm}  \\
            =\sum_{k=1}^{N}\mathbb{E}(e^{-sa_{k}W_{n}-\eta T_{n}}1_{\{Y_{n}=k\}})|Y_{1}=i,W_{0}=v)\mathbb{E}(e^{-s(R_{n+1}+\eta)A_{n+1}}1_{\{Y_{n+1}=j\}})|Y_{n}=k)\vspace{2mm}  \\
            +\mathbb{E}((1-e^{-s [V_{n} W_{n}+R_{n+1}A_{n+1}]^{-}})e^{-\eta (T_{n}+A_{n+1})}1_{\{Y_{n+1}=j\}}|Y_{1}=i,W_{0}=v)\vspace{2mm}  \\
 =\sum_{k=1}^{N}\mathbb{E}(e^{-sa_{k}W_{n}-\eta T_{n}}1_{\{Y_{n}=k\}})|Y_{1}=i,W_{0}=v)g_{k,j}(s,\eta)\vspace{2mm}  \\
            +\mathbb{E}((1-e^{-s [V_{n} W_{n}+R_{n+1}A_{n+1}]^{-}})e^{-\eta (T_{n}+A_{n+1})}1_{\{Y_{n+1}=j\}}|Y_{1}=i,W_{0}=v).
        \end{array}
    \end{displaymath}
    Multiplying with $r^{n+1}$ and adding for all $n$ we obtain
    \begin{displaymath}
        z_{i,j}(r,s,\eta,v)-r\mathbb{E}(e^{-sW_{1}-\eta T_{1}}1_{\{Y_{1}=j\}}|Y_{1}=i,W_{0}=v)=r\sum_{k=1}^{N}z_{i,k}(r,a_{k}s,\eta,v)g_{k,j}(s,\eta)+v_{i,j}(r,s,\eta,v).
    \end{displaymath}
    Simple computations yield \eqref{v11}.
\end{proof}

Note that for $|r|<1$, $Re(\eta)\geq 0$, 
\begin{itemize}
\item The left-hand side of \eqref{v11} is analytic in $Re(s)>0$, continuous in $Re(s)\geq 0$, and is also bounded.
    \item The right-hand side of \eqref{v11} is analytic in $Re(s)<0$, continuous in $Re(s)\leq 0$, and is also bounded.
\end{itemize}
Liouville's theorem \cite{tit} states that for $Re(s)\geq 0$,
\begin{equation}
    [z_{i,j}(r,s,\eta,v)-\delta_{i,j}z_{i}^{0}(s,\eta,v)](r_{j}s+\eta+q_{j})-r\sum_{k=1}^{N}z_{i,k}(r,a_{k}s,\eta,v)p_{k,j}q_{j}=c_{i,j}^{(0)}(r,\eta,v)+sc_{i,j}^{(1)}(r,\eta,v),
    \label{vb1}
\end{equation}
and for $Re(s)\leq 0$,
\begin{equation}
    (r_{j}s+\eta+q_{j})v_{i,j}(r,s,\eta,v)=c_{i,j}^{(0)}(r,\eta,v)+sc_{i,j}^{(1)}(r,\eta,v).
    \label{vb2}
\end{equation}
For $s=0$, \eqref{vb2} yields $c_{i,j}^{(0)}(r,\eta,v)=0$ since $v_{i,j}(r,0,\eta,v)=0$.

Let $R^{+}=\{j\in E:r_{j}>0\}$, $R^{-}=\{j\in E:r_{j}<0\}$. Note that for $j\in R^{+}$, $r_{j}s+\eta+q_{j}=0$ vanishes at $s:=s_{j}^{+}=-\frac{\eta+q_{j}}{r_{j}}<0$, while for $j\in R^{-}$, $r_{j}s+\eta+q_{j}=0$ vanishes at $s:=s_{j}^{-}=-\frac{\eta+q_{j}}{r_{j}}>0$. Then, using \eqref{vb2}, for $s=s_{j}^{+}$, $j\in R^{+}$
\begin{displaymath}
    (r_{j}s_{j}^{+}+\eta+q_{j})v_{i,j}(r,s_{j}^{+},\eta,v)=s_{j}^{+}c_{i,j}^{(1)}(r,\eta,v),
\end{displaymath}
thus, $c_{i,j}^{(1)}(r,\eta,v)=0$, $i\in E$, $j\in R^{+}$. Setting $s=s_{j}^{-}$, $j\in R^{+}$ in \eqref{vb1} yields,
\begin{equation}
    -r\sum_{k=1}^{N}z_{i,k}(r,a_{k}s_{j}^{-},\eta,v)p_{k,j}q_{j}=s_{j}^{-}c_{i,j}^{(1)}(r,\eta,v).\label{bv1}
\end{equation}
Equation \eqref{bv1} provides a system of equations for $c_{i,j}^{(1)}(r,\eta,v)$, $i\in E$, $j\in R^{-}$. However, we need an expression for the $Z_{i,k}(r,a_{k}s_{j}^{-},\eta,v)$, $i,k\in E$, in the left-hand side of \eqref{bv1}. In a matrix terms, \eqref{vb1} is written as
\begin{equation}
    Z(r,s,\eta,v)=r\sum_{k=1}^{N}Z(r,a_{k}s,\eta,v)\tilde{P}^{(k)}G(s,\eta)+L(r,s,\eta,v),\label{nm1}
\end{equation}
where
\begin{displaymath}
    L(r,s,\eta,v)=rZ^{0}(s,\eta,v)+sC^{(1)}(r,\eta,v)[s\widehat{R}+\eta I+\widehat{Q}]^{-1},
\end{displaymath}
where $C^{(1)}(r,\eta,v)$ an $N\times N$ matrix with $c_{i,j}^{(1)}(r,\eta,v)=0$, $i\in E$, $j\in R^{+}$, and $c_{i,j}^{(1)}(r,\eta,v)$, $i\in E$, $j\in R^{-}$ are obtained by \eqref{bv1}. 
%For notational convenience, let $\widehat{\mathbf{G}}_{0,0,\ldots,1,\ldots,0}(s,\eta):=\widehat{P}^{(k)}\mathbf{G}(s,\eta)$, where the index of $\widehat{\mathbf{G}}$ is a $N\times 1$ vector with zeros except at position $k$, which is equal to 1.

Iterating \eqref{nm1} $n$ times yields
\begin{equation}
\begin{array}{rl}
Z(r,s,\eta,v)=&\sum_{k=0}^{n}\sum_{i_{1}+\ldots+i_{N}=k}F_{i_{1},\ldots,i_{M}}(r,s,\eta,v)L(r,\alpha_{i_{1},\ldots,i_{N}}(s),\eta,v)\vspace{2mm}\\&+\sum_{i_{1}+\ldots+i_{N}=n+1}F_{i_{1},\ldots,i_{N}}(r,s,\eta,v)Z(r,\alpha_{i_{1},\ldots,i_{N}}(s),\eta,v),
    \end{array}\label{iterz}
\end{equation}
where $\alpha_{i_{1},\ldots,i_{N}}(s)=\alpha_{1}^{i_{1}}(\alpha_{2}^{i_{2}}(\ldots(\alpha_{N}^{i_{N}}(s))\ldots))$ and $\alpha_{i}^{n}(s)$ denotes the $n$th iterate of $\alpha_{i}(s)=a_{i}s$, with $a_{0,\ldots,0}(s)=s$, and the functions $F_{i_{1},\ldots,i_{N}}(r,s,\eta,v)$ are recursively defined by
\begin{displaymath}
    F_{i_{1},\ldots,i_{N}}(r,s,\eta)=r\sum_{k=1}^{N}F_{i_{1},\ldots,i_{k}-1,\ldots,i_{N}}(r,s,\eta)\tilde{P}^{(k)}G(\alpha_{i_{1},\ldots,i_{k}-1,\ldots,i_{N}}(s),\eta),
\end{displaymath}
with $F_{0,\ldots,0}(r,s,\eta):=I$, $F_{i_{1},\ldots,i_{N}}(r,s,\eta)=O$ (that is, the zero matrix) if one of the indices is equal to $-1$, and $F_{0,\ldots,0,1,0,\ldots,0}(r,s,\eta):=r\tilde{P}^{(k)}G(s,\eta)$, $k=1,\ldots,N$ (the 1 in the index of $F$ is in the $k$th position). Note that as $i_{1}+\ldots+i_{N}\to\infty$, $G(\alpha_{i_{1},\ldots,i_{N}}(s),\eta)\to Pdiag(q_{1}/(q_{1}+\eta),\ldots,q_{N}/(q_{N}+\eta))$, thus, $||G(\alpha_{i_{1},\ldots,i_{N}}(s),\eta)||\leq ||P||||diag(q_{1}/(q_{1}+\eta),\ldots,q_{N}/(q_{N}+\eta))||\leq 1$, $Re(\eta)\geq 0$. Therefore, $F_{i_{1},\ldots,i_{N}}(r,s,\eta)$ is bounded. Note that the form of \eqref{nm1} is the same as \eqref{basic0} where the contractions $\alpha_{i}(s)$ have a common fixed-point $a=0$. So using Theorem \ref{tranth} we conclude that
\begin{equation}
   Z(r,s,\eta,v)=\sum_{k=0}^{\infty}\sum_{i_{1}+\ldots+i_{N}=k}F_{i_{1},\ldots,i_{M}}(r,s,\eta,v)L(r,\alpha_{i_{1},\ldots,i_{N}}(s),\eta,v).\label{cxa} 
\end{equation}
The expression \eqref{cxa} can be used in \eqref{bv1} to obtain the unknown terms $c_{i,j}^{(1)}(r,\eta,v)$, $i\in E$, $j\in R^{-}$.

\section{Multidimensional extensions}\label{multi}
In the following, we cope with some application examples, the analysis of which lead to multidimensional versions of \eqref{opq}. In particular, we first investigated a modulated ASIP tandem network of two queues with consumption, and then, we focus on a vector-valued reflected autoregressive process. Although the derived functional equations are (more or less) different, the solution machinery is similar to that in \eqref{opq}. In particular, the analysis of an instance of the modulated ASIP tandem network with consumption results in a functional equation of the form \eqref{masip}, where $\alpha_{i}(t):=b_{i}(t)$, $b_{i}\in(0,1)$, $i=1,\ldots,N$, in which we have two coupled recursions; see subsection \eqref{asip}. Its solution is given by Theorem \ref{th-masip-full}. The analysis of the VAR(1) model results in a different vector-valued functional equation, which is far more interesting. However, the solution approach of a special case has similarities with the machinery applied in Section \ref{theory}; see subsection \ref{var2} and Theorem \ref{thvar}.
\subsection{A modulated ASIP tandem network with consumption}\label{asip}
In this section, we consider the Markov-modulated analogue of the model analyzed in \cite{boxkelres}. The authors in \cite{boxkelres} considered a non-modulated ASIP (asymmetric inclusion process) tandem queue, in which the first queue receives a fluid input according to a Lévy subordinator process. Each queue has a gate that opens after
independent, exponentially distributed periods for an infinitesimal amount of time,
allowing the queue content to move to the next queue. In addition, again at independent exponentially distributed instants, a fixed fraction of a queue content is removed
from the system.

Consider a queueing model consisting of two stations, say $Q_1,Q_{2}$ in series. We assume that only $Q_1$ receives an external input, which is a L\'evy subordinator process
$X = \{X(t);t \geq 0\}$. Given the state of the background process $J(t)$, that takes values in $E=\{1,2,\ldots,N\}$, the Laplace exponent of $\{X(t);t\geq 0\}$ is $\phi_{i}(.)$, with $\phi_{i}(0)=0$, i.e., that is,
$\mathbb{E}(e^{-sX(t)}1_{\{J(t)=i\}}) = e^{-\phi_{i}(s)t}$ for $s,t \geq 0$. The process $\{J(t);t\geq 0\}$ jumps to any state $j\in E$ with probability $p_{i,j}$, given that $J(t)=i$. In state $i\in E$, $J(t)$ remains for an exponentially distributed time interval with rate $q_{i}$. Given that $J(t)=i$, $Q_k$ has a gate that is closed except for infinitesimally short gate openings that occur at
independent $exp(\mu_{k,i})$ intervals, $k = 1,2$. At a gate opening of $Q_1$, its content instantaneously moves to $Q_{2}$; at a gate opening of $Q_2$, its content leaves the
system. At independent $exp(\tau_{k,i})$ intervals, a fraction $a_{k,i}$ of the content of station $Q_k$
is instantaneously removed from the system, $k = 1,2$. Let $(Z_{1}(t), Z_{2}(t))$ be the buffer
contents of $(Q_1, Q_2)$ at time $t \geq 0$, with $Z_{1}(0)=0=Z_{2}(0)$, and with LST $f_{i}(t,s_1, s_2)
= \mathbb{E}(e^{-s_{1}Z_{1}(t)-s_{2}Z_{2}(t)}1_{\{J(t)=i\}})$, $i\in E$. Then, for $h\to 0$ we have,
\begin{displaymath}
    \begin{array}{rl}
        f_{i}(t+h,s_1, s_2)=&[1-(\mu_{1,i}+\mu_{2,i}+\tau_{1,i}+\tau_{2,i}+q_{i})h]f_{i}(t,s_1, s_2)+\mu_{1,i}hf_{i}(t,s_2,s_2)+\mu_{2,i}hf_{i}(t,s_1,0)\vspace{2mm}\\&+\tau_{1,i}hf_{i}(t,(1-a_{1,i})s_1, s_2)+\tau_{2,i}hf_{i}(t,s_1,(1-a_{2,i}) s_2)+q_{i}h\sum_{j=1}^{N}p_{i,j}f_{j}(t,s_1, s_2)+o(h).
    \end{array}
\end{displaymath}
For the steady-state case, with $f_{i}(s_1, s_2)$ the LST of the steady-state joint
buffer content distribution, we have
\begin{equation}
    \begin{array}{l}
      (\mu_{1,i}+\mu_{2,i}+\tau_{1,i}+\tau_{2,i}+q_{i})f_{i}(s_1, s_2)=\mu_{1,i}f_{i}(s_2,s_2)+\mu_{2,i}f_{i}(s_1,0)\vspace{2mm}\\+\tau_{1,i}f_{i}((1-a_{1,i})s_1, s_2)+\tau_{2,i}f_{i}(s_1,(1-a_{2,i}) s_2)+q_{i}\sum_{j=1}^{N}p_{i,j}f_{j}(s_1, s_2).
    \end{array}\label{ola}
\end{equation}
Let $\tilde{f}(s_{1},s_{2})=(f_{1}(s_{1},s_{2}),\ldots,f_{N}(s_{1},s_{2}))^{T}$, $M_{k}=diag(\mu_{k,1},\ldots,\mu_{k,N})$, $T_{k}=diag(\tau_{k,1},\ldots,\tau_{k,N})$, $k=1,2,$ $Q=(q_{i,j})_{i,j\in E}$, with $q_{i,j}:=q_{i}p_{i,j}$, $i,j\in E$, $i\neq j$, $q_{i,i}:=q_{i}(p_{i,i}-1)$, $i\in E$, and $\tilde{\phi}(s_{1})=diag(\phi_{1}(s_{1}),\ldots,\phi_{N}(s_{1}))$, then, \eqref{ola} can be rewritten in matrix form as
\begin{equation}
\begin{array}{c}
    (M_{1}+M_{2}+T_{1}+T_{2}+\tilde{\phi}(s_{1})-Q)\tilde{f}(s_{1},s_{2})=M_{1}\tilde{f}(s_{2},s_{2})+M_{2}\tilde{f}(s_{1},0)\vspace{2mm}\\+T_{1}\sum_{i=1}^{N}\tilde{P}^{(i)}\tilde{f}((1-a_{1,i})s_{1},s_{2})+T_{2}\sum_{i=1}^{N}\tilde{P}^{(i)}\tilde{f}(s_{1},(1-a_{2,i})s_{2}).
\end{array}\label{vbm}
\end{equation}
\subsubsection{The buffer content of $Q_{1}$}
Focusing on the marginal content of $Q_{1}$, we have that the LST $\tilde{f}(s):=\tilde{f}(s,0)$, and $\tilde{f}(0)=\tilde{\pi}^{T}$, the stationary vector of the background Markov process $\{J(t);t\geq 0\}$. Therefore, \eqref{vbm} is reduced to
\begin{equation}
    (M_{1}+T_{1}+\tilde{\phi}(s)-Q)\tilde{f}(s)=M_{1}\tilde{\pi}^{T}+T_{1}\sum_{i=1}^{N}\tilde{P}^{(i)}\tilde{f}((1-a_{1,i})s),\,Re(s)\geq 0.\label{vmo}
\end{equation}
Then, we have the following result.
\begin{lemma}\label{roots}
    The equation
    \begin{equation}
        det(M_{1}+T_{1}+\tilde{\phi}(s)-Q)=0,
    \end{equation}
    has exactly $N$ roots, say $y_{1},\ldots,y_{N}$, such that $Re(y_{i})<0$, $i=1,\ldots,N$.
\end{lemma}
\begin{proof}
    This is easily proven by writing first $Q=\bar{Q}-Q_{d}$, where $\bar{Q}=(q_{i}p_{i,j})_{i,j\in E}$, $Q_{d}:=diag(q_{1},\ldots,q_{N})$. Then, $G(s):=M_{1}+T_{1}+\tilde{\phi}(s)-Q=F(s)-\bar{Q}$, where $F(s):=diag(f_{1}(s),\ldots,f_{N}(s))$, $f_{i}(s):=\mu_{1,i}+\tau_{1,i}+q_{i}+\phi_{i}(s)$, $i\in E$. Then, 
    \begin{displaymath}
        |\mu_{1,i}+\tau_{1,i}+q_{i}+\phi_{i}(s)|>\sum_{j=1}^{N}|q_{i}p_{i,j}|=q_{i}.
    \end{displaymath}
    By using \cite[Theorem 11.3]{desmit}, since $det(F(s))=0$ has exactly $N$ roots with a negative real part, is implied that $det(F(s)-\bar{Q})=det(M_{1}+T_{1}+\tilde{\phi}(s)-Q)=0$ has exactly $N$ roots with a negative real part.
\end{proof}

Therefore,
\begin{equation}
  \tilde{f}(s)=\tilde{M}(s)+\tilde{T}(s)\sum_{i=1}^{N}\tilde{P}^{(i)}\tilde{f}((1-a_{1,i})s),\,Re(s)\geq 0,
  \label{solll}
\end{equation}
where, $\tilde{M}(s):=(M_{1}+T_{1}+\tilde{\phi}(s)-Q)^{-1}M_{1}\tilde{\pi}^{T}$, $\tilde{T}(s):=(M_{1}+T_{1}+\tilde{\phi}(s)-Q)^{-1}T_{1}$. Note that \eqref{solll} has exactly the same form as \eqref{opq}, and therefore we can apply the results in Theorem \ref{th1}.

The iteration of \eqref{solll} yields the following:
\begin{equation}
    \tilde{f}(s)=\sum_{j=0}^{\infty}\sum_{i_{1}+i_{2}+\ldots+i_{N}=j}F_{i_{1},\ldots,i_{N}}(s)\tilde{M}(\alpha_{i_{1},\ldots,i_{N}}(s)),\label{solllt}
\end{equation}
where now, $\alpha_{i_{1},\ldots,i_{N}}(s)=\alpha_{1}^{i_{1}}(a_{2}^{i_{2}}(\ldots(\alpha_{N}^{i_{N}}(s))\ldots))$ and $\alpha_{i}^{n}(s)$ denotes the $n$th iterate of $\alpha_{i}(s)=(1-a_{1,i})s$, with $a_{0,\ldots,0}(s)=s$, and the functions $F_{i_{1},\ldots,i_{N}}(s)$ are recursively defined by
\begin{displaymath}
    F_{i_{1},\ldots,i_{N}}(s)=\sum_{k=1}^{N}F_{i_{1},\ldots,i_{k}-1,\ldots,i_{N}}(s)\tilde{T}(\alpha_{i_{1},\ldots,i_{k}-1,\ldots,i_{N}}(s))\tilde{P}^{(k)},
\end{displaymath}
with $F_{0,\ldots,0}(s):=I$, and $F_{i_{1},\ldots,i_{N}}(s)=O$ (i.e., the zero matrix) if one of the indices equals $-1$. Note that \eqref{solllt} is derived by having in mind that as $n\to\infty$, $\alpha_{i_{1},\ldots,i_{N}}(s)\to 0$, and $\tilde{T}(a_{i_{1},\ldots,i_{N}}(s))\to (M_{1}+T_{1}-Q)^{-1}T_{1}$, and $\tilde{M}(\alpha_{i_{1},\ldots,i_{N}}(s))\to (M_{1}+T_{1}-Q)^{-1}M_{1}\tilde{\pi}^{T}$. Now, by iterating \eqref{solll} $n$ times yields,
\begin{equation}
    \tilde{f}(s)=\sum_{k=0}^{n}\sum_{i_{1}+\ldots+i_{N}=k}F_{i_{1},\ldots,i_{M}}(s)\tilde{M}(\alpha_{i_{1},\ldots,i_{N}}(s))+\sum_{i_{1}+\ldots+i_{N}=n+1}F_{i_{1},\ldots,i_{N}}(s)\tilde{f}(\alpha_{i_{1},\ldots,i_{N}}(s)),
   \label{itertb}
\end{equation}
Note that $F_{i_{1},\ldots,i_{N}}(s)$ is written as the sum of the product of matrices, where each has elements that are strictly smaller than one. This implies that the second term on the right-hand side of \eqref{itertb} is a convergent matrix as $n\to\infty$. Thus, we come up with \eqref{solllt}. 
\begin{remark}
    The expression for the buffer content $Q_{1}$ is simplified when $a_{1,i}=0$, and when $a_{1,i}=1$, $i=1,\ldots,N$. For $a_{1,i}=0$, $i=1,\ldots,N$, there is no consumption (so we can take $\tau_{1,i}=0$, thus, $T_{1}=O$) from \eqref{vmo} we find
    \begin{equation}
        \tilde{f}(s)=(M_{1}+\tilde{\phi}(s)-Q)^{-1}M_{1}\tilde{\pi}^{T}.\label{sc}
    \end{equation}
    When $a_{1,i}=1$, $i=1,\ldots,N$,
    \begin{equation*}
        \tilde{f}(s)=(M_{1}+T_{1}+\tilde{\phi}(s)-Q)^{-1}(M_{1}+T_{1})\tilde{\pi}^{T},
    \end{equation*}
\end{remark}
\subsubsection{Two variants for the Markov-modulated two-queue ASIP model}
We now focus on obtaining the LST of the joint buffer content distribution of the two-queue ASIP model for the two cases: a) $a_{1,i}\in(0,1)$, $a_{2,i}=0$, $i=1,\ldots,N$, b) $a_{2,i}\in(0,1)$, $a_{1,i}=0$, $i=1,\ldots,N$.
\subsubsection{The case $a_{1,i}\in(0,1)$, $a_{2,i}=0$, $i=1,\ldots,N$}
Let us now consider for \eqref{vbm} $a_{2,i}=0$, $a_{1,i}\in (0,1)$, $i\in E$. Then we have
\begin{equation*}
    (M_{1}+M_{2}+T_{1}+\tilde{\phi}(s_{1})-Q)\tilde{f}(s_{1},s_{2})=M_{1}\tilde{f}(s_{2},s_{2})+M_{2}\tilde{f}(s_{1},0)+T_{1}\sum_{i=1}^{N}\tilde{P}^{(i)}\tilde{f}((1-a_{1,i})s_{1},s_{2}),
\end{equation*}
or equivalently to
\begin{equation}
\begin{array}{c}
    \tilde{f}(s_{1},s_{2})=N_{1}(s_{1})\tilde{f}(s_{2},s_{2})+N_{2}(s_{1})\tilde{f}(s_{1},0)+T_{1,1}(s_{1})\sum_{i=1}^{N}\tilde{P}^{(i)}\tilde{f}((1-a_{1,i})s_{1},s_{2}),
\end{array}\label{vbm1}
\end{equation}
where
\begin{displaymath}
    \begin{array}{rl}
       N_{i}(s_{1}):=& (M_{1}+M_{2}+T_{1}+\tilde{\phi}(s_{1})-Q)^{-1}M_{i}, \,i=1,2,\\
         T_{1,1}(s_{1}):=& (M_{1}+M_{2}+T_{1}+\tilde{\phi}(s_{1})-Q)^{-1}T_{1}.
    \end{array}
\end{displaymath}
\begin{remark}
    One can easily show (as in Lemma \ref{roots}) that $det(M_{1}+M_{2}+T_{1}+\tilde{\phi}(s_{1})-Q)=0$ has $N$ roots with negative real parts, thus, for $Re(s)\geq 0$, $(M_{1}+M_{2}+T_{1}+\tilde{\phi}(s_{1})-Q)^{-1}$ exits.
\end{remark}
Set $L(s_{1},s_{2}):=N_{1}(s_{1})\tilde{f}(s_{2},s_{2})+N_{2}(s_{1})\tilde{f}(s_{1},0)$ and \eqref{vbm1} is rewritten for $Re(s_{1})\geq 0,Re(s_{2})\geq 0$ as
\begin{equation}
    \tilde{f}(s_{1},s_{2})=L(s_{1},s_{2})+T_{1,1}(s_{1})\sum_{i=1}^{N}\tilde{P}^{(i)}\tilde{f}((1-a_{1,i})s_{1},s_{2}).
\label{vbm2}
\end{equation}
Note that for fixed $s_{2}$ such that $Re(s_{2})\geq 0$, \eqref{vbm2} has the same form as \eqref{opq}, and therefore we apply the results in Theorem \ref{th1}. Note that as $n\to\infty$, $\alpha_{i_{1},\ldots,i_{N}}(s_{1})\to 0$, $N_{i}(\alpha_{i_{1},\ldots,i_{N}}(s_{1}))\to (M_{1}+M_{2}+T_{1}-Q)^{-1}M_{i}$, i.e., the elements of the limiting matrix are all bounded by a number that is smaller than 1. Therefore, under similar arguments as in \eqref{solllt},
\begin{equation}
    \tilde{f}(s_{1},s_{2})=\sum_{j=0}^{\infty}\sum_{i_{1}+i_{2}+\ldots+i_{N}=j}\tilde{F}_{i_{1},\ldots,i_{N}}(s_{1},s_{2})L(\alpha_{i_{1},\ldots,i_{N}}(s_{1}),s_{2}),\label{so}
\end{equation}
where now, $\alpha_{i_{1},\ldots,i_{N}}(s_{1})=\alpha_{1}^{i_{1}}(\alpha_{2}^{i_{2}}(\ldots(\alpha_{N}^{i_{N}}(s_{1}))\ldots))$ and $a_{i}^{n}(s_{1})$ denotes the $n$th iterate of $\alpha_{i}(s_{1})=(1-a_{1,i})s_{1}$, with $a_{0,\ldots,0}(s_{1})=s_{1}$, and the functions $\tilde{F}_{i_{1},\ldots,i_{N}}(s_{1},s_{2})$ are recursively defined by
\begin{displaymath}
    \tilde{F}_{i_{1},\ldots,i_{N}}(s_{1},s_{2})=\sum_{k=1}^{N}\tilde{F}_{i_{1},\ldots,i_{k}-1,\ldots,i_{N}}(s_{1},s_{2})T_{1,1}(\alpha_{i_{1},\ldots,i_{k}-1,\ldots,i_{N}}(s_{1}))\tilde{P}^{(k)},
\end{displaymath}
with $\tilde{F}_{0,\ldots,0}(s_{1},s_{2}):=I$, and $\tilde{F}_{0,\ldots,0}(s_{1}),s_{2})=O$ (i.e., the zero matrix) if one of the indices equals $-1$. 

Note that the vector terms $L(\alpha_{i_{1},\ldots,i_{N}}(s_{1}),s_{2})$ contain expressions of the vectors $\tilde{f}(s_{1},0)$, and $\tilde{f}(s_{2},s_{2})$. These terms are obtained as follows: 1) the vector $\tilde{f}(s_{1},0)$ has already been derived in \eqref{solllt}. Thus, $\tilde{f}(\alpha_{i_{1},\ldots,i_{N}}(s_{1}),0)$ that is needed in $L(\alpha_{i_{1},\ldots,i_{N}}(s_{1}),s_{2})$ in \eqref{so} is already known. 2) the vector term $\tilde{f}(s_{2},s_{2})$ is derived by setting $s_{1}=s_{2}$ in \eqref{so}, and obtain after simple calculations:
\begin{displaymath}
\begin{array}{l}
    [I-\sum_{j=0}^{\infty}\sum_{i_{1}+i_{2}+\ldots+i_{N}=j}\tilde{F}_{i_{1},\ldots,i_{N}}(s_{2},s_{2})N_{1}(\alpha_{i_{1},\ldots,i_{N}}(s_{2}))]\tilde{f}(s_{2},s_{2})\vspace{2mm}\\=\sum_{j=0}^{\infty}\sum_{i_{1}+i_{2}+\ldots+i_{N}=j}\tilde{F}_{i_{1},\ldots,i_{N}}(s_{2},s_{2})N_{2}(\alpha_{i_{1},\ldots,i_{N}}(s_{2}))\tilde{f}(\alpha_{i_{1},\ldots,i_{N}}(s_{2}),0).
    \end{array}
\end{displaymath}
Provided that the $I-\sum_{j=0}^{\infty}\sum_{i_{1}+i_{2}+\ldots+i_{N}=j}\tilde{F}_{i_{1},\ldots,i_{N}}(s_{2},s_{2})N_{1}(\alpha_{i_{1},\ldots,i_{N}}(s_{2}))$ is invertible, we can have an expression for $\tilde{f}(s_{2},s_{2})$. Therefore, we have obtained $\tilde{f}(s_{1},s_{2})$ through \eqref{so}.

\subsubsection{The case $a_{1,i}=0$, $i=1,\ldots,N$}
In this case, first note that $\tilde{f}(s_{1},0)$ is given by \eqref{so}. Then, \eqref{vbm} is written as
\begin{equation}
    \tilde{f}(s_{1},s_{2})=\widehat{M}_{1}(s_{1})\tilde{f}(s_{2},s_{2})+\widehat{M}_{2}(s_{1})\tilde{f}(s_{1},0)+\widehat{T}_{2}(s_{1})\sum_{i=1}^{N}\tilde{P}^{(i)}\tilde{f}(s_{1},(1-a_{2,i})s_{2}),
\label{vb1x}
\end{equation}
where $\widehat{M}_{i}(s_{1}):=(M_{1}+M_{2}+T_{2}+\tilde{\phi}(s_{1})-Q)^{-1}M_{i}$, $i=1,2$ and $\widehat{T}_{2}(s_{1}):=(M_{1}+M_{2}+T_{2}+\tilde{\phi}(s_{1})-Q)^{-1}T_{2}$.
\begin{remark}
    One can easily show (as in Lemma \ref{roots}) that $det(M_{1}+M_{2}+T_{2}+\tilde{\phi}(s_{1})-Q)=0$ has $N$ roots with negative real parts, thus, for $Re(s)\geq 0$, $(M_{1}+M_{2}+T_{2}+\tilde{\phi}(s_{1})-Q)^{-1}$ exists.
\end{remark}
Substituting $s_{1}=s_{2}$ in \eqref{vb1x} we obtain
\begin{equation*}
    (I-\widehat{M}_{1}(s_{2}))\tilde{f}(s_{2},s_{2})=\widehat{M}_{2}(s_{1})\tilde{f}(s_{2},0)+\widehat{T}_{2}(s_{2})\sum_{i=1}^{N}\tilde{P}^{(i)}\tilde{f}(s_{2},(1-a_{2,i})s_{2}),
\end{equation*}
or equivalently,
\begin{equation}
    \tilde{f}(s_{2},s_{2})=\widehat{F}(s_{2})\tilde{f}(s_{2},0)+\widehat{G}(s_{2})\sum_{i=1}^{N}\tilde{P}^{(i)}\tilde{f}(s_{2},(1-a_{2,i})s_{2}),
    \label{tr}
\end{equation}
where $\widehat{F}(s_{2}):= (I-\widehat{M}_{1}(s_{2}))^{-1}\widehat{M}_{2}(s_{2})$, $\widehat{G}(s_{2}):= (I-\widehat{M}_{1}(s_{2}))^{-1}\widehat{T}_{2}(s_{2})$. Note that by setting $s=s_{2}$ in \eqref{sc} we can get $\tilde{f}(s_{2},0)$. Substituting the derived expression in \eqref{tr} we get a functional equation for $\tilde{f}(s_{2},s_{2})$:
\begin{equation}
    \tilde{f}(s_{2},s_{2})=\widehat{H}(s_{2})+\widehat{G}(s_{2})\sum_{i=1}^{N}\tilde{P}^{(i)}\tilde{f}(s_{2},(1-a_{2,i})s_{2}),
    \label{tr1}
\end{equation}
where $\widehat{H}(s_{2}):=\widehat{F}(s_{2})(M_{1}+\tilde{\phi}(s_{2})-Q)^{-1}M_{1}\tilde{\pi}^{T}$. Substituting \eqref{tr1} back in \eqref{vb1x} we come up with the following relation
\begin{equation}
\begin{array}{rl}
    \tilde{f}(s_{1},s_{2})=&\widehat{M}_{1}(s_{1})\widehat{H}(s_{2})+\widehat{M}_{1}(s_{1})G(s_{2})\sum_{i=1}^{N}\tilde{P}^{(i)}\tilde{f}(s_{2},(1-a_{2,i})s_{2})\vspace{2mm}\\& +\widehat{M}_{2}(s_{1})(M_{1}+\tilde{\phi}(s_{1})-Q)^{-1}M_{1}\tilde{\pi}^{T}+\widehat{T}_{2}(s_{1})\sum_{i=1}^{N}\tilde{P}^{(i)}\tilde{f}(s_{1},(1-a_{2,i})s_{2}).
    \end{array}\label{w1}
\end{equation}
Replacing $s_{1}$ with $s$, $s_{2}$ with $t$, and writing $b_{i}:=1-a_{2,i}$, $i=1,\ldots,N$, \eqref{w1} is rewritten as
\begin{equation}
    \tilde{f}(s,t)=K(s,t)+R(s,t)\sum_{i=1}^{N}\tilde{P}^{(i)}\tilde{f}(t,b_{i}t)+\widehat{T}_{2}(s)\sum_{i=1}^{N}\tilde{P}^{(i)}\tilde{f}(s,b_{i}t),\label{lab}
\end{equation}
where,
\begin{displaymath}
    \begin{array}{rl}
       K(s,t):=&\widehat{M}_{1}(s)\widehat{H}(t)+\widehat{M}_{2}(s)(M_{1}+\tilde{\phi}(s)-Q)^{-1}M_{1}\tilde{\pi}^{T},\vspace{2mm}\\
       R(s,t):=&\widehat{M}_{1}(s)\widehat{G}(t).
    \end{array}
\end{displaymath}
Note that \eqref{lab} has the form of \eqref{masip}. As we stated in the introduction, although \eqref{masip} has a different form compared to \eqref{opq}, its solution method follows the same machinery. Iterating \eqref{lab} $n -1$ times results in 
\begin{equation}
\begin{array}{rl}
    \tilde{f}_{n}(s,t)=&K_{n}(s,t)+\sum_{k=1}^{n}\sum_{i_{1}+\ldots+i_{N}=k-1}\sum_{j_{1}+\ldots+j_{N}=n,,j_{l}\geq i_{l}}G_{\tilde{i},\tilde{j}}(s,t)\tilde{f}(b_{i_{1},\ldots,i_{N}}(t),b_{j_{1},\ldots,j_{N}}(t))\vspace{2mm}\\&+\sum_{i_{1}+\ldots+i_{N}=n}F_{i_{1},\ldots,i_{N}}(s)\tilde{f}(s,b_{i_{1},\ldots,i_{N}}(t)),
    \end{array}\label{appr}
\end{equation}
where, $b_{i_{1},\ldots,i_{N}}(s)=b_{1}^{i_{1}}(b_{2}^{i_{2}}(\ldots(b_{N}^{i_{N}}(s))\ldots))$ and $b_{i}^{n}(s)$ denotes the $n$th iterate of $b_{i}(s)=(1-a_{2,i})s$, with $b_{0,\ldots,0}(s)=s$ $j_{l}\geq i_{l}$, $l=1,\ldots,N$, and $\tilde{i}:=(i_{1},\ldots,i_{N})$,
\begin{displaymath}
\begin{array}{rl}
K_{n}(s,t)=&K_{n-1}(s,t)+\sum_{k=1}^{n-1}\sum_{i_{1}+\ldots+i_{N}=k-1}\sum_{j_{1}+\ldots+j_{N}=n-1,j_{l}\geq i_{l}}G_{\tilde{i},\tilde{j}}(s,t)K(b_{i_{1},\ldots,i_{N}}(t),b_{j_{1},\ldots,j_{N}}(t))\vspace{2mm}\\
&+\sum_{i_{1}+\ldots+i_{N}=n-1}F_{i_{1},\ldots,i_{N}}(s)K(s,b_{i_{1},\ldots,i_{N}}(t)),\,n=2,3,\ldots,\vspace{2mm}\\
F_{i_{1},\ldots,i_{N}}(s)=&\sum_{k=1}^{N}F_{i_{1},\ldots,i_{k}-1,\ldots,i_{N}}(s)T_{1}(s)\tilde{P}^{(k)},
    \end{array}
\end{displaymath}
with $F_{0,\ldots,0}(s)=I$, and for $j_{1}+\ldots+j_{N}=n$,
\begin{displaymath}
    \begin{array}{rl}
        G_{\tilde{i},\tilde{j}}(s,t)= &\left[\sum_{i_{1}+\ldots+i_{N}\leq n-1,j_{l}\geq i_{l}}\sum_{k=1}^{N}G_{\tilde{i},\tilde{j}-\tilde{1}_{k}}(s,t)G_{\tilde{0},\tilde{1}_{k}}(b_{\tilde{i}}(t),b_{\tilde{j}-1_{k}}(t),t))+F_{\tilde{i}}(s)G_{\tilde{0},\tilde{j}-\tilde{i}}(s,b_{\tilde{i}}(t)) \right]1_{\{i_{1}+\ldots+i_{N}=n-1\}} \vspace{2mm}\\
         &+ \sum_{j_{1}+\ldots+j_{N}= n-1,j_{l}\geq i_{l}}\sum_{k=1}^{N}G_{\tilde{i},\tilde{j}-\tilde{1}_{k}}(s,t)F_{\tilde{1}_{k}}(b_{\tilde{i}}(t))1_{\{i_{1}+\ldots+i_{N}<n-1\}},
    \end{array}
\end{displaymath}
where $\tilde{1}_{k}$ is an $1\times N$ vector with 1 in $k$th position and all other elements equal to 0.

In case $\mu_{2,i}>0$, one can approximate $\tilde{f}(s,t)$ arbitrarily closely by $K_{n}(s, t)$, by taking $n$ sufficiently large. This is because the second and third terms in the righthand side of \eqref{appr} become arbitrarily small as $n\to\infty$. 

The functional equation \eqref{lab} fits within the framework of Theorem \ref{th-masip-full}. 
Indeed, the recursion involves evaluations of $\tilde{f}$ in transformed arguments 
of the form $(s,t)\mapsto (t,b_i t)$ and $(s,t)\mapsto (s,b_i t)$, where $b_i\in(0,1)$. 
Therefore, along any iteration, the second argument contracts geometrically to $0$, 
while the first argument is eventually mapped to the second through the mixed 
recursion, and therefore also converges to $0$. Therefore, 
the solution of \eqref{lab} can be obtained by Theorem \ref{th-masip-full}), 
with absolute convergence ensured under a condition of the form 
$\max\{\|R(0,0)\|_1,\|\widehat{T}_2(0)\|_1\} b^*<1$, where $b^{*}:=max\{b_1,\ldots,b_{N}\}<1$.

\subsection{On a reflected VAR(1) process}\label{var2}
We now consider the a reflected vector autoregressive process (VAR(1)) (see e.g., \cite{kesten,glas}, \cite[Chapter 4]{bura}) that is described by the following vector-valued recursion:
\begin{equation}
    \tilde{Z}_{n+1}=[A\tilde{Z}_{n}+\tilde{S}_{n}-\tilde{A}_{n+1}]^{+},\label{var}
\end{equation}
where $A$ is an $N\times N$ matrix with elements $a_{i,j}$, $i,j=1,\ldots,N$, such that $a_{i,j}\in[0,1)$, and $\tilde{Z}_{n}:=(Z_{1,n},\ldots,Z_{N,n})^{T}$. Let also $\tilde{S}_{n}:=(S_{1,n},\ldots,S_{N,n})^{T}$, $\tilde{A}_{n}:=(A_{1,n},\ldots,A_{N,n})^{T}$, and assume that $S_{j,n}$, $A_{j,n}$ are series of i.i.d. random variables, independent of anything else. From \eqref{var} is readily seen that (element-wise form) 
\begin{equation}
    Z_{j,n+1}=[\sum_{k=1}^{N}a_{j,k}Z_{k,n}+S_{j,n}-A_{j,n+1}]^{+},\,j=1,\ldots,N.
\end{equation}
\subsubsection{The vector of the marginal Laplace-Stieltjes transforms}
Let $F^{n}_{j}(s):=\mathbb{E}(e^{-sF_{j,n}})$ be the LST of the distribution of a random variable $F_{j,n}$, and $\tilde{Z}^{n}(s)=(z_{1}^{n}(s),\ldots,z_{N}^{n}(s))^{T}$. Then,
\begin{equation}
    \begin{array}{rl}
        \mathbb{E}(e^{-sZ_{j,n+1}})=&\mathbb{E}(e^{-s(\sum_{k=1}^{N}a_{j,k}Z_{k,n}+S_{j,n}-A_{j,n+1})}+1-e^{-s[\sum_{k=1}^{N}a_{j,k}Z_{k,n}+S_{j,n}-A_{j,n+1}]^{-}})  \vspace{2mm}\\
         =& \mathbb{E}(e^{-s\sum_{k=1}^{N}a_{j,k}Z_{k,n}})S_{j}(s)A_{j}(-s)+1-u_{j,n}^{-}(s),
    \end{array}\label{vba}
    \end{equation}
where
\begin{displaymath}
    u_{j,n}^{-}(s)=\mathbb{E}(e^{-s[\sum_{k=1}^{N}a_{j,k}Z_{k,n}+S_{j,n}-A_{j,n+1}]^{-}}).
\end{displaymath}
By assuming that for each $n$, $Z_{j,n}$, $j=1,\ldots,N,$ are independent, then $z^{n}(s_{1},\ldots,s_{N}):=\mathbb{E}(e^{-\sum_{k=1}^{N}Z_{k,n}s_{k}})=\prod_{k=1}^{N}z^{n}_{k}(s_{k})$. Now, by letting $n\to\infty$, \eqref{vba} is rewritten for $j=1,\ldots,N$, $Re(s)=0$, as
\begin{equation}
    z_{j}(s)=S_{j}(s)A_{j}(-s)\prod_{k=1}^{N}z_{k}(a_{j,k}s)+1-u_{j}^{-}(s).\label{pl}
\end{equation}
Assume now that $A_{j,n}\sim exp(\lambda_{j})$, $j=1,\ldots,N$. Then, \eqref{pl} is written for $Re(s)=0$ as
\begin{equation}
    (\lambda_{j}-s)z_{j}(s)-\lambda_{j}S_{j}(s)\prod_{k=1}^{N}z_{k}(a_{j,k}s)=(\lambda_{j}-s)(1-u_{j}^{-}(s)).\label{pl1}
\end{equation}
Then,
\begin{itemize}
\item The left-hand side of \eqref{pl1} is analytic in $Re(s)>0$, continuous in $Re(s)\geq 0$, and is also bounded.
    \item The right-hand side of \eqref{pl1} is analytic in $Re(s)<0$, continuous in $Re(s)\leq 0$, and is also bounded.
\end{itemize}
Liouville's theorem \cite{tit} states that for $Re(s)\geq 0$, $j=1,\ldots,N,$
\begin{equation}
    (\lambda_{j}-s)z_{j}(s)-\lambda_{j}S_{j}(s)\prod_{k=1}^{N}z_{k}(a_{j,k}s)=c_{0,j}+sc_{1,j}.\label{pl2}
\end{equation}
For $s=0$, since $z_{j}(0)=1$, \eqref{pl2} implies that $c_{0,j}=0$, $j=1,\ldots,N$. Moreover, setting $s=\lambda_{j}$ in \eqref{pl2} we found that
\begin{equation}
    c_{1,j}=-S_{j}(\lambda_{j})\prod_{l_{1}=1}^{N}z_{l_{1}}(a_{j,k}\lambda_{j}),\,j=1,\ldots,N,
\end{equation}
and
\begin{equation}
    z_{j}(s)=k_{j}(s)\prod_{l_{1}=1}^{N}z_{l_{1}}(a_{j,l_{1}}s)+l_{j}(s).\label{pl3}
\end{equation}
where
\begin{displaymath}
    k_{j}(s):=\frac{\lambda_{j}}{\lambda_{j}-s}S_{j}(s),\,l_{j}(s):=-\frac{s}{\lambda_{j}-s}c_{1,j}.
\end{displaymath}
In matrix notation (i.e., the vector of the marginal LSTs), \eqref{pl3} is written as 
\begin{equation}
    \tilde{Z}(s)=K(s)\sum_{i=1}^{N}\prod_{j=1}^{N}\circ P_{i,j}\tilde{Z}(a_{i,j}s)+\tilde{L}(s),\label{nmk}
\end{equation}
where,
\begin{equation}
    \prod_{j=1}^{N}\circ P_{i,j}\tilde{Z}(a_{i,j}s):=P_{i,1}\tilde{Z}(a_{i,1}s)\circ P_{i,2}\tilde{Z}(a_{i,2}s)\circ\ldots\circ P_{i,N}\tilde{Z}(a_{i,N}s),\label{iterq}
\end{equation}
with "$\circ$" denotes the Hadamard product, $P_{i,j}$ be an $N\times N$ matrix with the $(i,j)$ element is equal to 1, and all others equal to 0, and $K(s):=diag(k_{1}(s),\ldots,k_{N}(s))$, $L(s):=diag(l_{1}(s),\ldots,l_{N}(s))$. Note that \eqref{nmk}, although seems to has a similar form as the one in \eqref{opq}, it is more complicated since in any iterating step we have to appropriately substitute for any $\tilde{Z}(a_{i,j}s)$, $i,j=1,\ldots,N$, thus rapidly increasing the number of terms. 

Note that although \eqref{nmk} involves Hadamard products and multiple indices, 
it retains the key structural properties that are required in Theorem \ref{th1}. More precisely, the arguments contract to zero, and the multiplicative term satisfies $K(0)=I$, and $\tilde{L}(0)=\tilde{0}$. Moreover, since the Hadamard product acts componentwise, it does not 
introduce any interaction between different coordinates. In particular, 
for any matrix norm (e.g., $\|A\|_{\max} := \max_{i,j} \{|A_{i,j}|\}$), we have $\|A \circ B\| \le \|A\| \,\|B\|$, so that the Hadamard product preserves boundedness of the iterates. 

Furthermore, after $k$ recursions, the argument should be 
$s_k = a_{i_1,j_1}\ldots a_{i_k,j_k}s$ ($s_0=s$), and therefore $|s_k|\le (a^*)^k|s|$ for $a^*:=\max_{i,j} \{a_{i,j}\}<1$. 
Since $l_j(s)=O(s)$ near $0$, each occurrence of $\tilde{L}(s_k)$ introduces 
a factor of order $(a^*)^k$, while the multiplicative factors involving 
$K(s_k)$ remain uniformly bounded. Consequently, each term in the expansion 
is bounded by a geometric sequence of order $(a^*)^k$, and the Hadamard 
product does not affect this decay.
Consequently, the associated series converges absolutely, and the solution 
can be obtained by an analogous iterative procedure. So, Theorem \ref{th1} can be adapted to that case.
\begin{remark}
Consider the special case where $A=diag(a_{1},\ldots,a_{N})$. Then, 
\begin{equation}
    Z_{j,n+1}=[a_{j}Z_{j,n}+S_{j,n}-A_{j,n+1}]^{+},\,j=1,\ldots,N,
\end{equation}
which corresponds to the following functional equation for the marginal LST
\begin{equation}
    z_{j}(s)=z_{j}(a_{j}s)S_{j}(s)A_{j}(-s)+1-u_{j}^{-}(s).
\end{equation}
Assuming that $A_{j,n}\sim exp(\lambda_{j})$, so that $A_{j}(s)=\frac{\lambda_{j}}{\lambda_{j}+s}$ $j=1,\ldots,N$, we have that for $Re(s)=0$,
\begin{displaymath}
    (\lambda_{j}-s)z_{j}(s)-z_{j}(a_{j}s)S_{j}(s)\lambda_{j}=(\lambda_{j}-s)(1-u_{j}^{-}(s)).
\end{displaymath}
Liouville's theorem \cite{tit} implies that for $Re(s)\geq 0$,
\begin{displaymath}
    (\lambda_{j}-s)z_{j}(s)-z_{j}(a_{j}s)S_{j}(s)\lambda_{j}=c_{0,j}+c_{1,j}s.
\end{displaymath}
For $s=0$, we have $c_{0,j}=0$, $j=1,\ldots,N$. For $s=\lambda_{j}$, we have $c_{1,j}=-z_{j}(a_{j}\lambda_{j})S_{j}(\lambda_{j})\lambda_{j}$. Thus, for $Re(s)\geq 0$,
\begin{equation}
    z_{j}(s)=z_{j}(a_{j}s)S_{j}(s)\frac{\lambda_{j}}{\lambda_{j}-s}-\frac{sc_{1,j}}{\lambda_{j}-s}.\label{scalar}
\end{equation}
Note that the diagonal form of $A$ reduces the analysis for the vector of marginal LSTs to the scalar version investigated in \cite{box1}. In particular, the solution of \eqref{scalar} is
\begin{equation}
    z_{j}(s)=c_{1,j}\sum_{m=0}^{\infty}\frac{a_{j}^{m}s}{\lambda_{j}-a_{j}^{m}s}\prod_{k=0}^{m-1}\frac{\lambda_{j}S_{j}^{*}(a_{j}^{k}s)}{\lambda_{j}-a_{j}^{k}s}+\prod_{m=0}^{\infty}\frac{\lambda_{j}S_{j}^{*}(a_{j}^{m}s)}{\lambda_{j}-a_{j}^{m}s}.\label{sc1}
\end{equation}
For $s=a_{j}\lambda_{j}$ in \eqref{sc1} we find an expression for $z_{j}(a_{j}\lambda_{j})$, and substituting in $c_{1,j}=-z_{j}(a_{j}\lambda_{j})S_{j}(\lambda_{j})\lambda_{j}$, all $c_{1,j}$, $j=1,2,\ldots,N$ are known. Then, $z_{j}(s)$ are given explicitly in \eqref{sc1}.

In matrix notation, \eqref{scalar} is written as 
\begin{equation}
    \tilde{Z}(s)=S(s)A(-s)\sum_{k=1}^{N}\tilde{P}^{(k)}\tilde{Z}(a_{k}s)+\tilde{V}(s),\label{var1}
\end{equation}
where $S(s)=diag(S_{1}(s),\ldots,S_{N}(s))$, $\Lambda=diag(\lambda_{1},\ldots,\lambda_{N})$, $\tilde{V}(s):=(I-A(-s))\tilde{c}$, $A(s):=diag(A_{1}(s),\ldots,A_{N}(s))$, $\tilde{c}=(c_{1,1},\ldots,c_{1,N})^{T}$. Note that \eqref{var1} has the same form as \eqref{bhj}, where $H(s)$ is equal now to $S(s)A(-s)$, however, its diagonal form considerably simplifies the analysis (note that $H(0)=I$, $\tilde{V}(0)=\tilde{0}$).
\end{remark}
\subsubsection{The joint Laplace-Stieltjes transform}
Note that in order to come up with the expression in \eqref{pl}, we assumed that for each $n$, $Z_{j,n}$, $j=1,\ldots,N$ are independent. That assumption, although it creates a somehow more complicated vector-valued functional equation in \eqref{nmk} (see also \eqref{var1}), can be treated similarly to those in the previous sections. In the following, we consider the joint LST of the stationary vector, drop the assumption of independence, and to enhance readability, we focus on the case $N=2$. Thus, we aim to obtain the joint LST of the stationary vector $(Z_{1},Z_{2})^{T}$, i.e., $f(s_{1},s_{2}):=\mathbb{E}(e^{-s_{1}Z_{1}-s_{2}Z_{2}})$. Then,
\begin{equation}
    \begin{array}{rl}
        f(s_{1},s_{2}):=&\mathbb{E}(e^{-s_{1}Z_{1}-s_{2}Z_{2}})=E(e^{-s_{1}[\sum_{j=1}^{2}a_{1,j}Z_{j}+S_{1}-A_{1}]^{+}-s_{2}[\sum_{j=1}^{2}a_{2,j}Z_{j}+S_{2}-A_{2}]^{+}})\vspace{2mm}  \\
         =& \mathbb{E}(\left(e^{-s_{1}[\sum_{j=1}^{2}a_{1,j}Z_{j}+S_{1}-A_{1}]}+1-e^{-s_{1}[\sum_{j=1}^{2}a_{1,j}Z_{j}+S_{1}-A_{1}]^{-}}\right)\vspace{2mm}\\&\times\left(e^{-s_{2}[\sum_{j=1}^{2}a_{2,j}Z_{j}+S_{2}-A_{2}]}+1-e^{-s_{2}[\sum_{j=1}^{2}a_{2,j}Z_{j}+S_{2}-A_{2}]^{+}}\right))\vspace{2mm}\\
         =&\mathbb{E}(e^{-s_{1}[\sum_{j=1}^{2}a_{1,j}Z_{j}+S_{1}-A_{1}]-s_{2}[\sum_{j=1}^{2}a_{2,j}Z_{j}+S_{2}-A_{2}]})+\mathbb{E}(e^{-s_{1}[\sum_{j=1}^{2}a_{1,j}Z_{j}+S_{1}-A_{1}]})\vspace{2mm}\\
         &-\mathbb{E}(e^{-s_{1}[\sum_{j=1}^{2}a_{1,j}Z_{j}+S_{1}-A_{1}]-s_{2}[\sum_{j=1}^{2}a_{2,j}Z_{j}+S_{2}-A_{2}]^{-}})+\mathbb{E}(e^{-s_{2}[\sum_{j=1}^{2}a_{2,j}Z_{j}+S_{2}-A_{2}]})\vspace{2mm}\\
         &+1-\mathbb{E}(e^{-s_{2}[\sum_{j=1}^{2}a_{2,j}Z_{j}+S_{2}-A_{2}]^{-}})-\mathbb{E}(e^{-s_{1}[\sum_{j=1}^{2}a_{1,j}Z_{j}+S_{1}-A_{1}]^{-}-s_{2}[\sum_{j=1}^{2}a_{2,j}Z_{j}+S_{2}-A_{2}]})\vspace{2mm}\\
         &-\mathbb{E}(e^{-s_{1}[\sum_{j=1}^{2}a_{1,j}Z_{j}+S_{1}-A_{1}]^{-}})+\mathbb{E}(e^{-s_{1}[\sum_{j=1}^{2}a_{1,j}Z_{j}+S_{1}-A_{1}]^{-}-s_{2}[\sum_{j=1}^{2}a_{2,j}Z_{j}+S_{2}-A_{2}]^{-}}).
    \end{array}\label{fund}
\end{equation}
Now,
\begin{displaymath}
    \begin{array}{rl}
        \mathbb{E}(e^{-s_{1}[\sum_{j=1}^{2}a_{1,j}Z_{j}+S_{1}-A_{1}]-s_{2}[\sum_{j=1}^{2}a_{2,j}Z_{j}+S_{2}-A_{2}]})= & \mathbb{E}(e^{-Z_{1}\sum_{j=1}^{2}a_{j,1}s_{j}-Z_{2}\sum_{j=1}^{2}a_{j,2}s_{j}})\frac{\lambda_{1}\lambda_{2}S_{1}(s_{1})S_{2}(s_{2})}{(\lambda_{1}-s_{1})(\lambda_{2}-s_{2})}\vspace{2mm}\\
        =&\frac{\lambda_{1}\lambda_{2}S_{1}(s_{1})S_{2}(s_{2})}{(\lambda_{1}-s_{1})(\lambda_{2}-s_{2})}f(a_{1,1}s_{1}+a_{2,1}s_{2},a_{1,2}s_{1}+a_{2,2}s_{2}),\vspace{2mm}\\
        \mathbb{E}(e^{-s_{k}[\sum_{j=1}^{2}a_{k,j}Z_{j}+S_{k}-A_{k}]})= &\frac{\lambda_{k}S_{k}(s_{k})}{\lambda_{k}-s}f(a_{k,1}s_{k},a_{k,2}s_{k}),\,k=1,2.
    \end{array}
\end{displaymath}
Note that for $k=1,2,$
\begin{displaymath}
    \begin{array}{rl}
         e^{-s_{k}[\sum_{j=1}^{2}a_{k,j}Z_{j}+S_{k}-A_{k}]^{-}}=&\int_{y=0}^{\sum_{j=1}^{2}a_{k,j}Z_{j}+S_{k}}\lambda_{k}e^{-\lambda_{k}y}dy+\int_{y=\sum_{j=1}^{2}a_{k,j}Z_{j}+S_{k}}^{\infty}e^{-s_{k}[\sum_{j=1}^{2}a_{k,j}Z_{j}+S_{k}-y]}\lambda_{k}e^{-\lambda_{k}y}dy  \vspace{2mm}\\
         =&1-e^{-\lambda_{k}[\sum_{j=1}^{2}a_{k,j}Z_{j}+S_{k}]}+\lambda_{k}e^{-s_{k}[\sum_{j=1}^{2}a_{k,j}Z_{j}+S_{k}]} \int_{y=\sum_{j=1}^{2}a_{k,j}Z_{j}+S_{k}}^{\infty}e^{-(\lambda_{k}-s_{k})y}dy\vspace{2mm}\\
         =&1+\frac{s_{k}}{\lambda_{k}-s_{k}}e^{-\lambda_{k}[\sum_{j=1}^{2}a_{k,j}Z_{j}+S_{k}]}.
    \end{array}
\end{displaymath}
Therefore,
\begin{displaymath}
    \mathbb{E}(e^{-s_{k}[\sum_{j=1}^{2}a_{k,j}Z_{j}+S_{k}-A_{k}]^{-}})=1+\frac{s_{k}S_{k}(\lambda_{k})}{\lambda_{k}-s_{k}}f(\lambda_{k}a_{k,1},\lambda_{k}a_{k,2}),\,k=1,2,
\end{displaymath}
and
\begin{displaymath}
    \begin{array}{r}
        \mathbb{E}(e^{-s_{1}[\sum_{j=1}^{2}a_{1,j}Z_{j}+S_{1}-A_{1}]-s_{2}[\sum_{j=1}^{2}a_{2,j}Z_{j}+S_{2}-A_{2}]^{-}})= \mathbb{E}(e^{-s_{1}[\sum_{j=1}^{2}a_{1,j}Z_{j}+S_{1}-A_{1}]}(1+\frac{s_{2}}{\lambda_{2}-s_2}e^{-\lambda_{2}[\sum_{j=1}^{2}a_{2,j}Z_{j}+S_{2}]})) \vspace{2mm} \\
         =\mathbb{E}(e^{-s_{1}[\sum_{j=1}^{2}a_{1,j}Z_{j}+S_{1}-A_{1}]})+ \frac{s_{2}}{\lambda_{2}-s_{2}}E(e^{-s_{1}[\sum_{j=1}^{2}a_{1,j}Z_{j}+S_{1}-A_{1}]-\lambda_{2}[\sum_{j=1}^{2}a_{2,j}Z_{j}+S_{2}]})\vspace{2mm}\\
         =\frac{\lambda_{1}S_{1}(s_{1})}{\lambda_{1}-s_{1}}f(a_{1,1}s_{1},a_{1,2}s_{1})+\frac{\lambda_{1}s_{2}S_{2}(\lambda_{2})S_{1}(s_{1})}{(\lambda_{1}-s_{1})(\lambda_{2}-s_{2})}f(s_{1}a_{1,1}+\lambda_{2}a_{2,1},s_{1}a_{1,2}+\lambda_{2}a_{2,2}),
    \end{array}
\end{displaymath}
and similarly,
\begin{displaymath}
    \begin{array}{l}
        \mathbb{E}(e^{-s_{1}[\sum_{j=1}^{2}a_{1,j}Z_{j}+S_{1}-A_{1}]^{-}-s_{2}[\sum_{j=1}^{2}a_{2,j}Z_{j}+S_{2}-A_{2}]})\vspace{2mm}\\
         =\frac{\lambda_{2}S_{2}(s_{2})}{\lambda_{2}-s_{2}}f(a_{2,1}s_{2},a_{2,2}s_{2})+\frac{\lambda_{2}s_{1}S_{1}(\lambda_{1})S_{2}(s_{2})}{(\lambda_{1}-s_{1})(\lambda_{2}-s_{2})}f(\lambda_{1}a_{1,1}+s_{2}a_{2,1},\lambda_{1}a_{1,2}+s_{2}a_{2,2}).
    \end{array}
\end{displaymath}
Moreover,
\begin{displaymath}
    \begin{array}{l}
      \mathbb{E}(e^{-s_{1}[\sum_{j=1}^{2}a_{1,j}Z_{j}+S_{1}-A_{1}]^{-}-s_{2}[\sum_{j=1}^{2}a_{2,j}Z_{j}+S_{2}-A_{2}]^{-}})\vspace{2mm}\\=\mathbb{E}(\left(1+\frac{s_{1}}{\lambda_{1}-s_{1}}e^{-\lambda_{1}[\sum_{j=1}^{2}a_{1,j}Z_{j}+S_{1}]}\right)\left(1+\frac{s_{2}}{\lambda_{2}-s_{2}}e^{-\lambda_{2}[\sum_{j=1}^{2}a_{2,j}Z_{j}+S_{2}]}\right)) \vspace{2mm}\\
      =1+\frac{s_{2}S_{2}(\lambda_{2})}{\lambda_{2}-s_{2}}f(\lambda_{2}a_{2,1},\lambda_{2}a_{2,2})+\frac{s_{1}S_{1}(\lambda_{1})}{\lambda_{1}-s_{1}}f(\lambda_{1}a_{1,1},\lambda_{1}a_{1,2})+\frac{s_{1}s_{2}S_{1}(\lambda_{1})S_{2}(\lambda_{2})}{(\lambda_{1}-s_{1})(\lambda_{2}-s_{2})}f(\lambda_{1}a_{1,1}+\lambda_{2}a_{2,1},\lambda_{1}a_{1,2}+\lambda_{2}a_{2,2}).
    \end{array}
\end{displaymath}
Substituting the above terms in \eqref{fund} we come up with the following functional equation:
\begin{equation}
\begin{array}{rl}
    f(s_{1},s_{2})=&c_{1}(s_{1},s_{2})f(T(s_{1},s_{2}))-c_{2}(s_{1},s_{2})f(T(s_{1},\lambda_{2}))\vspace{2mm}\\
    &-c_{3}(s_{1},s_{2})f(T(\lambda_{1},s_{2}))+c_{4}(s_{1},s_{2})f(T(\lambda_{1},\lambda_{2})),
    \end{array}\label{bza}
\end{equation}
where $T(s_{1},s_{2}):=A^{T}\tilde{s}=(\sum_{j=1}^{2}a_{j,1}s_{j},\sum_{j=1}^{2}a_{j,2}s_{j})$, $\tilde{s}:=(s_{1},s_{2})^{T}$, and
\begin{equation}
    \begin{array}{rl}
         c_{1}(s_{1},s_{2})=\frac{\lambda_{1}\lambda_{2}S_{1}(s_{1})S_{2}(s_{2})}{(\lambda_{1}-s_{1})(\lambda_{2}-s_{2})},& c_{2}(s_{1},s_{2})=\frac{\lambda_{1}s_{2}S_{1}(s_{1})S_{2}(\lambda_{2})}{(\lambda_{1}-s_{1})(\lambda_{2}-s_{2})}, \vspace{2mm}\\
         c_{3}(s_{1},s_{2})=\frac{s_{1}\lambda_{2}S_{1}(\lambda_{1})S_{2}(s_{2})}{(\lambda_{1}-s_{1})(\lambda_{2}-s_{2})},& c_{4}(s_{1},s_{2})=\frac{s_{1}s_{2}S_{1}(\lambda_{1})S_{2}(\lambda_{2})}{(\lambda_{1}-s_{1})(\lambda_{2}-s_{2})}.
    \end{array}\label{coeffi}
\end{equation}

Our aim is to solve the functional equation \eqref{bza} iteratively. A crucial step is to verify that the mapping 
$T(s_{1},s_{2}) := A^{T}\tilde{s}$ is a contraction. Although we assume that $a_{i,j}\in[0,1)$, 
this alone is not sufficient to guarantee convergence. In order to ensure that the iterates 
$T^{(n)}(s_{1},s_{2})$ converge to the unique fixed point $(0,0)$, it is required that 
$T$ is a contraction mapping, i.e., there exists a matrix norm such that
$\|A^{T}\| < 1$. A sufficient and equivalent condition is that the spectral radius satisfies $\rho(A) < 1$ 
(since $\rho(A^{T})=\rho(A)$). In the special case where $A$ is diagonal, i.e., $a_{1,2}=a_{2,1}=0$, the mapping $T$ is 
always a contraction, since
\begin{displaymath}
    \|A^{T}\tilde{s}\| \le a^{*} \|\tilde{s}\|, \quad a^{*}:=\max\{a_{i,i},\, i=1,\ldots,N\} < 1.
\end{displaymath}
%Our aim is to solve functional equation \eqref{bza}. Note that both arguments of $f(.,.)$ at the right-hand side of \eqref{bza} contain both $s_{1}$, and $s_{2}$. We aim to solve it iteratively, and an important point in solving it is to check whether the mapping $T(s_{1},s_{2}):=A^{T}\tilde{s}$ is a contraction mapping. In our case, we have assumed that $a_{i,j}\in[0,1)$, however, in order to ensure convergence of the iteration to a unique fixed point (in our case to $(0,0)$), $T(s_{1},s_{2})$ should be a contraction mapping, and this is the case when $||A^{T}||<1$, for any norm in $\mathbb{R}^{n}$, or $\rho(A^{T})<1$ (i.e., the maximum eigenvalue of $A$ to be smaller than 1). Note that in case $A$ is a diagonal matrix, i.e., $a_{1,2}=0=a_{2,1}$, then, $T(s_{1},s_{2})$ is always a (commutative) contraction mapping, since $||A^{T}\tilde{s}||\leq a^{*}||\tilde{s}||$, where $a^{*}:=max\{a_{i,i},i=1,\ldots,N\}$. 

Thus, as a first step in solving \eqref{bza}, we assume in the following that $A=diag(a_{1},a_{2})$ (where for notational convenience we denote $a_{1}$ for $a_{1,1}$, and $a_{2}$ for $a_{2,2}$). Note that for $A=diag(a_{1},a_{2})$, \eqref{bza} is now written as:
\begin{equation}
\begin{array}{rl}
    f(s_{1},s_{2})=&c_{1}(s_{1},s_{2})f(a_{1}s_{1},a_{2}s_{2})-c_{2}(s_{1},s_{2})f(s_{1}a_{1},\lambda_{2}a_{2})-c_{3}(s_{1},s_{2})f(\lambda_{1}a_{1},s_{2}a_{2})+c_{4}(s_{1},s_{2})A_{1,1},
    \end{array}\label{bza1}
\end{equation}
where from hereon we denote $A_{i,j}:=f(a_{1}^{i}\lambda_{1},a_{2}^{j}\lambda_{2})$, $i,j=1,2,\ldots$. 

Then, iterating \eqref{bza1} $n-1$ times results in 
\begin{equation}
    \begin{array}{rl}
        f(s_{1},s_{2})=& C_{1,1}^{(n)}(s_{1},s_{2})A_{1,1}+\sum_{i=2}^{n}C_{i,1}^{(n)}(s_{1},s_{2})A_{i,1}+\sum_{j=2}^{n}C_{1,j}^{(n)}(s_{1},s_{2})A_{1,j}-\sum_{j=1}^{n}D_{j}^{(n)}(s_{1},s_{2})f(a_{1}^{n}s_{1},a_{2}^{j}\lambda_{2})\vspace{2mm}\\&-\sum_{i=1}^{n}K_{i}^{(n)}(s_{1},s_{2})f(a_{1}^{i}\lambda_{1},a_{2}^{n}s_{2})+\prod_{k=0}^{n-1}c_{1}(\sigma_{1,1}^{(k)}(s_{1},s_{2}))f(a_{1}^{n}s_{1},a_{2}^{n}s_{2}), 
    \end{array}\label{bza2}
\end{equation}
where, $\sigma_{i,j}^{(k)}(s_{1},s_{2})=\sigma_{i,j}^{(k-1)}(\sigma^{(1)}_{i,j}(s_{1},s_{2}))$ (i.e., $\sigma^{(l)}(.)$ is the $l$th composition of $\sigma(.)$ with itself), with
\begin{displaymath}
    \sigma^{(1)}_{i,j}(s_{1},s_{2}) =\left\{\begin{array}{rl}
       (a_{1}s_{1},a_{2}s_{2}), &i=j=1,  \\
       (a_{1}s_{1},a_{2}\lambda_{2}), &i=1,j=0,  \\
       (a_{1}\lambda_{1},a_{2}s_{2}), &i=0,j=1,
    \end{array}\right.
\end{displaymath}
and $\sigma^{(0)}_{1,1}(s_{1},s_{2}):=(s_{1},s_{2}) $ (Note that $\sigma^{(1)}_{1,1}(s_{1},s_{2}):=A^{T}\tilde{s}$). Moreover, $C_{1,1}^{(n)}(s_{1},s_{2})$, $n=1,2,\ldots$ are recursively computed with $C_{1,1}^{(1)}(s_{1},s_{2}):=c_{4}(s_{1},s_{2})$, and
\begin{displaymath}
\begin{array}{rl}
    C_{1,1}^{(n)}(s_{1},s_{2})=&C_{1,1}^{(n-1)}(s_{1},s_{2})+\prod_{k=0}^{n-2}c_{1}(\sigma^{(k)}_{1,1}(s_{1},s_{2}))c_{4}(\sigma^{(n-1)}_{1,1}(s_{1},s_{2}))-\sum_{j=1}^{n-1}D_{j}^{(n-1)}(s_{1},s_{2})c_{4}(a_{1}^{n-1}s_{1},a_{2}^{j}\lambda_{2})\vspace{2mm}\\
    &-\sum_{i=1}^{n-1}K_{i}^{(n-1)}(s_{1},s_{2})c_{4}(a_{1}^{i-1}\lambda_{1},a_{2}^{n-1}s_{2}),\,n=2,3,\ldots.
    \end{array}
\end{displaymath}
and,
\begin{displaymath}
    \begin{array}{rl}
        C_{1,j}^{(n)}(s_{1},s_{2})=&\sum_{k=j-1}^{n-1}D_{j-1}^{(k)}(s_{1},s_{2})c_{3}(a_{1}^{k}s_{1},a_{2}^{j-1}\lambda_{2}),\,j=2,\ldots,n,\vspace{2mm}  \\
         C_{i,1}^{(n)}(s_{1},s_{2})=&\sum_{k=i-1}^{n-1}K_{i-1}^{(k)}(s_{1},s_{2})c_{2}(a_{1}^{i-1}\lambda_{1},a_{2}^{k}s_{2}),\,i=2,\ldots,n,
    \end{array}
\end{displaymath}
Furthermore $D_{1}^{(1)}(s_{1},s_{2}):=c_{2}(s_{1},s_{2})$ and for $n=2,3,\ldots,$
\begin{displaymath}
    \begin{array}{rl}
        D_{1}^{(n)}(s_{1},s_{2}) =&\prod_{k=0}^{n-1}c_{1}(\sigma_{1,1}^{(k)}(s_{1},s_{2}))D_{1}^{(1)}(\sigma_{1,1}^{(n-1)}(s_{1},s_{2}))-\sum_{j=1}^{n-1} D_{j}^{(n-1)}(s_{1},s_{2}) D_{1}^{(1)}(a_{1}^{n-1}s_{1},a_{2}^{j}\lambda_{2}),\vspace{2mm}  \\
         D_{j}^{(n)}(s_{1},s_{2})=&D_{j-1}^{(n-1)}(s_{1},s_{2})c_{1}(a_{1}^{n-1}s_{1},a_{2}^{j-1}\lambda_{2}),\,j=2,\ldots,n. 
    \end{array}
\end{displaymath}
Similarly, $K_{1}^{(1)}(s_{1},s_{2}):=c_{3}(s_{1},s_{2})$ and for $n=2,3,\ldots,$
\begin{displaymath}
    \begin{array}{rl}
        K_{1}^{(n)}(s_{1},s_{2}) =&\prod_{k=0}^{n-1}c_{1}(\sigma_{1,1}^{(k)}(s_{1},s_{2}))K_{1}^{(1)}(\sigma_{1,1}^{(n-1)}(s_{1},s_{2}))-\sum_{i=1}^{n-1} K_{i}^{(n-1)}(s_{1},s_{2}) K_{1}^{(1)}(a_{1}^{i}\lambda_{1},a_{2}^{n-1}s_{2}),\vspace{2mm}  \\
         K_{i}^{(n)}(s_{1},s_{2})=&K_{i-1}^{(n-1)}(s_{1},s_{2})c_{1}(a_{1}^{i-1}\lambda_{1},a_{2}^{n-1}s_{2}),\,i=2,\ldots,n,
    \end{array}
\end{displaymath}
Note that each term appearing in the recursive definitions of 
$C_{i,j}^{(n)}(s_{1},s_{2})$, $D_j^{(n)}(s_{1},s_{2})$, and $K_i^{(n)}(s_{1},s_{2})$ 
is a finite product involving factors of the form 
$c_1(\cdot)$ and $c_\ell(\cdot)$, $\ell=2,3,4$, 
evaluated at iterated arguments of the form 
$(a_1^k s_1, a_2^k s_2)$ or mixed arguments where one component is fixed 
(e.g., $(a_1^k s_1, a_2^j \lambda_2)$).

%Letting $n\to\infty$, assuming $D_{j}^{(n)}(s_{1},s_{2})\to\tilde{D}_{j}(s_{1},s_{2})$, $K_{i}^{(n)}(s_{1},s_{2})\to\tilde{K}_{i}(s_{1},s_{2})$, $C_{1,1}^{(n)}(s_{1},s_{2})\to\tilde{C}_{1,1}(s_{1},s_{2})$, $C_{1,j}^{(n)}(s_{1},s_{2})\to\tilde{C}_{1,j}(s_{1},s_{2})$, $C_{i,1}^{(n)}(s_{1},s_{2})\to\tilde{C}_{i,1}(s_{1},s_{2})$, and having in mind that $f(0,0)=1$, \eqref{bza2} becomes
%\begin{equation}
 %   \begin{array}{rl}
  %      f(s_{1},s_{2})=& \tilde{C}_{1,1}(s_{1},s_{2})A_{1,1}+\sum_{i=2}^{\infty}\tilde{C}_{i,1}(s_{1},s_{2})A_{i,1}+\sum_{j=2}^{\infty}\tilde{C}_{1,j}(s_{1},s_{2})A_{1,j}\vspace{2mm}\\&-\sum_{j=1}^{\infty}\tilde{D}_{j}(s_{1},s_{2})f(0,a_{2}^{j}\lambda_{2})-\sum_{i=1}^{\infty}\tilde{K}_{i}(s_{1},s_{2})f(a_{1}^{i}\lambda_{1},0)+\prod_{k=0}^{\infty}c_{1}(\sigma_{1,1}^{(k)}(s_{1},s_{2})).
   % \end{array}\label{bza3}
%\end{equation}
The main result is summarized in the following theorem.
\begin{theorem}\label{thvar}
Consider the functional equation \eqref{bza1} and assume that the functions 
$c_k(s_1,s_2)$, $k=1,2,3,4$ defined in \eqref{coeffi} with $c_1(0,0)=1$, $c_2(0,0)=c_3(0,0)=c_4(0,0)=0$. Let $a_1,a_2 \in (0,1)$ and define $a^*:=\max\{a_1,a_2\}<1$. Then, for all $(s_1,s_2)$ with $Re(s_1)\ge 0$, $Re(s_2)\ge 0$, the solution of \eqref{bza1} is given by
\begin{equation}
    \begin{array}{rl}
        f(s_{1},s_{2})=& \tilde{C}_{1,1}(s_{1},s_{2})A_{1,1}+\sum_{i=2}^{\infty}\tilde{C}_{i,1}(s_{1},s_{2})A_{i,1}+\sum_{j=2}^{\infty}\tilde{C}_{1,j}(s_{1},s_{2})A_{1,j}\vspace{2mm}\\&-\sum_{j=1}^{\infty}\tilde{D}_{j}(s_{1},s_{2})f(0,a_{2}^{j}\lambda_{2})-\sum_{i=1}^{\infty}\tilde{K}_{i}(s_{1},s_{2})f(a_{1}^{i}\lambda_{1},0)+\prod_{k=0}^{\infty}c_{1}(\sigma_{1,1}^{(k)}(s_{1},s_{2})),
    \end{array}\label{bza3}
\end{equation}
where the coefficients $\tilde{C}_{i,j}(s_{1},s_{2})$, 
$\tilde{D}_{j}(s_{1},s_{2})$, and $\tilde{K}_{i}(s_{1},s_{2})$ are the limits of 
the recursively defined sequences $C_{i,j}^{(n)}(s_{1},s_{2})$, 
$D_{j}^{(n)}(s_{1},s_{2})$, and $K_{i}^{(n)}(s_{1},s_{2})$, respectively. Moreover, all series and the infinite product in \eqref{bza3} converge absolutely.
\end{theorem}
\begin{proof}
Iterate \eqref{bza1} $n-1$ times to obtain \eqref{bza2}, where 
all coefficients are defined recursively. Since $a_1,a_2 \in (0,1)$, we have $\sigma_{1,1}^{(k)}(s_1,s_2) \to (0,0)$ as $k\to\infty$. Since $c_1(s_1,s_2)$ is analytic close to $(0,0)$ and $c_1(0,0)=1$, there exists $C>0$ such that $|c_1(s_1,s_2)-1| \le C(|s_1|+|s_2|)$. Thus, for $(s_1,s_2)$ sufficiently close to $(0,0)$ we have
\begin{displaymath}
    |c_1(\sigma_{1,1}^{(k)}(s_1,s_2)) - 1|
\le C (a^*)^k (|s_1|+|s_2|).
\end{displaymath}
Since $\sum_{k=0}^\infty (a^*)^k < \infty$, it follows that
\begin{displaymath}
    \sum_{k=0}^{\infty} |c_1(\sigma_{1,1}^{(k)}(s_1,s_2)) - 1| < \infty,
\end{displaymath}
and therefore $\prod_{k=0}^{\infty} c_1(\sigma_{1,1}^{(k)}(s_1,s_2))$, converges absolutely. Having in mind that $f(0,0)=1$, we have $f(a_1^n s_1, a_2^n s_2) \to 1$. Thus, as $n\to\infty$
\begin{displaymath}
    \prod_{k=0}^{n-1} c_1(\sigma_{1,1}^{(k)}(s_1,s_2))f(a_1^n s_1, a_2^n s_2)\to \prod_{k=0}^{\infty} c_1(\sigma_{1,1}^{(k)}(s_1,s_2)).
\end{displaymath}
All recursively defined quantities $C_{i,j}^{(n)}(s_1,s_2)$, 
$D_j^{(n)}(s_1,s_2)$, and $K_i^{(n)}(s_1,s_2)$ 
are finite sums of products involving factors $c_1(\cdot)$ and $c_l(\cdot)$, $l=2,3,4$, evaluated at iterated 
arguments of the form 
$(a_1^k s_1, a_2^k s_2)$ or at mixed arguments where one component is fixed 
(e.g., $(a_1^k s_1, a_2^j \lambda_2)$). In particular, products of the form
\begin{displaymath}
    \prod_{m=0}^{k-1} c_1(\sigma_{1,1}^{(m)}(s_1,s_2)),
\end{displaymath}
converge to a finite limit, and therefore are uniformly bounded. 
The same holds for products of $c_1(\cdot)$ evaluated at the corresponding 
iterated arguments appearing in the recursion. 
Since $c_2(0,0)=c_3(0,0)=c_4(0,0)=0$ and the functions are continuous, 
there exists $C>0$ such that for $(s_1,s_2)$ sufficiently close to $(0,0)$,
\begin{displaymath}
|c_l(s_1,s_2)| \leq C(|s_1|+|s_2|),\, l=2,3,4.
\end{displaymath}
Hence,
\begin{displaymath}
|c_l(\sigma_{1,1}^{(k)}(s_1,s_2))|
\le C (a^*)^k (|s_1|+|s_2|),\,l=2,3,4.
\end{displaymath}
Hence, each term appearing in the recursive definitions is bounded by a geometric sequence of order $(a^*)^k$, and therefore all corresponding 
series converge absolutely. The fact that $c_2(0,0)=c_3(0,0)=c_4(0,0)=0$ ensures that the corresponding 
terms vanish at the fixed point $(0,0)$. This implies that these terms 
introduce additional decay along the iterations, which guarantees the 
absolute convergence of the series in \eqref{bza3}. In this sense, 
$c_1(s_1,s_2)$ governs the main recursive structure, while $c_2(s_1,s_2)$, $c_3(s_1,s_2)$, $c_4(s_1,s_2)$ act as 
vanishing perturbations (this is analogous to the case $\tilde{V}(0)=\tilde{0}$ in Theorem \ref{th1}).

Moreover, the recursive increments can be written as finite sums of terms involving such products evaluated at increasing indices. Each increment can be written as a finite sum of terms of the form
\begin{displaymath}
    \left(\prod_{m=0}^{k-1} c_1(\sigma_{1,1}^{(m)}(s_1,s_2))\right)
c_l(\sigma_{1,1}^{(k)}(s_1,s_2)),\, l=2,3,4,
\end{displaymath}
which are bounded by $C (a^*)^k$. Since $\sum_{k=0}^{\infty} (a^*)^k<\infty$, 
the increments are summable and therefore tend to zero. Therefore, the sequences 
$C_{i,j}^{(n)}(s_1,s_2)$, $D_j^{(n)}(s_1,s_2)$, and $K_i^{(n)}(s_1,s_2)$ are Cauchy and converge. Finally, since \eqref{bza2} holds for all $n$ and all terms on the right-hand side 
converge as $n\to\infty$, by taking the limit as $n\to\infty$ in \eqref{bza2} we obtain \eqref{bza3}.
\end{proof}

We finally have to find $A_{i,j}$, and $f(0,a_{2}^{j}\lambda_{2})$, $f(a_{1}^{i}\lambda_{1},0)$, $i,j=1,2,\ldots$. Note that as the number of iterations increases, i.e., $i\to\infty$, $A_{i,1}:=f(a_{1}^{i}\lambda_{1},a_{2}\lambda_{2})\to f(0,a_{2}\lambda_{2})$, and $j\to\infty$, $A_{1,j}:=f(a_{1}\lambda_{1},a_{2}^{j}\lambda_{2})\to f(a_{1}\lambda_{1},0)$ thus in practice, we need to obtain a few terms $A_{i,1}$, $A_{1,j}$, $i,j\geq 1$, and after that, all the terms converge to $f(0,a_{2}\lambda_{2})$, $f(a_{1}\lambda_{1},0)$.

We proceed first by obtaining $f(a_{1}^{i}\lambda_{1},0)$, $i=1,2,\ldots$. Set $s_{2}=0$ in \eqref{bza1} to obtain (note that $c_{2}(s_{1},0)=c_{4}(s_{1},0)=0$):
\begin{equation}
    f(s_{1},0)=c_{1}(s_{1},0)f(a_{1}s_{1},0)-c_{3}(s_{1},0)f(\lambda_{1}a_{1},0).
    \label{bz1}
\end{equation}
It is easily realized that \eqref{bz1} is similar to the stationary version of a reflected autoregressive process analyzed in \cite[Section 2.2]{box1}. So, by iterating \eqref{bz1} we have
\begin{equation}
    f(s_{1},0)=-f(\lambda_{1}a_{1},0)\sum_{j=0}^{\infty}c_{3}(a_{1}^{j}s_{1},0)\prod_{k=0}^{j-1}c_{1}(a_{1}^{k}s_{1},0)+\prod_{k=0}^{\infty}c_{1}(a_{1}^{k}s_{1},0).\label{q1a}
\end{equation}
Substituting $s_{1}=a_{1}\lambda_{1}$ in \eqref{q1a} we obtain after some algebra
\begin{equation}
    f(\lambda_{1}a_{1},0)=\frac{\prod_{k=0}^{\infty}\frac{S_{1}(a_{1}^{k+1}\lambda_{1})}{1-a_{1}^{k+1}}}{1+\sum_{n=0}^{\infty}\frac{a_{1}^{n+1}S_{1}(\lambda_{1})}{1-a_{1}^{n+1}}\prod_{k=0}^{n-1}\frac{S_{1}(a_{1}^{k+1}\lambda_{1})}{1-a_{1}^{k+1}}}.\label{s1}
\end{equation}
Now, in \eqref{s1} set $a_{1}\lambda_{1}$ instead of $\lambda_{1}$ to get:
\begin{equation*}
    f(\lambda_{1}a_{1}^{2},0)=\frac{\prod_{k=0}^{\infty}\frac{S_{1}(a_{1}^{k+2}\lambda_{1})}{1-a_{1}^{k+1}}}{1+\sum_{n=0}^{\infty}\frac{a_{1}^{n+1}S_{1}(a_{1}\lambda_{1})}{1-a_{1}^{n+1}}\prod_{k=0}^{n-1}\frac{S_{1}(a_{1}^{k+2}\lambda_{1})}{1-a_{1}^{k+1}}},
\end{equation*}
and continuing similarly,
\begin{equation}
    f(\lambda_{1}a_{1}^{i},0):=P_{i}=\frac{\prod_{k=0}^{\infty}\frac{S_{1}(a_{1}^{k+i}\lambda_{1})}{1-a_{1}^{k+1}}}{1+\sum_{n=0}^{\infty}\frac{a_{1}^{n+1}S_{1}(a_{1}^{i-1}\lambda_{1})}{1-a_{1}^{n+1}}\prod_{k=0}^{n-1}\frac{S_{1}(a_{1}^{k+i}\lambda_{1})}{1-a_{1}^{k+1}}},\,i=1,2,\ldots.\label{pi}
\end{equation}

By repeating the above procedure by letting $s_{1}=0$ in \eqref{bz1}, we can obtain
\begin{equation}
    f(0,\lambda_{2}a_{2})=\frac{\prod_{k=0}^{\infty}\frac{S_{2}(a_{2}^{k+1}\lambda_{2})}{1-a_{2}^{k+1}}}{1+\sum_{n=0}^{\infty}\frac{a_{2}^{n+1}S_{2}(\lambda_{2})}{1-a_{2}^{n+1}}\prod_{k=0}^{n-1}\frac{S_{2}(a_{2}^{k+1}\lambda_{2})}{1-a_{2}^{k+1}}},\label{s2}
\end{equation}
and by iterating \eqref{s2} 
\begin{equation}
    f(0,\lambda_{2}a_{2}^{j}):=Q_{j}=\frac{\prod_{k=0}^{\infty}\frac{S_{2}(a_{2}^{k+j}\lambda_{2})}{1-a_{2}^{k+1}}}{1+\sum_{n=0}^{\infty}\frac{a_{2}^{n+j}S_{2}(a_{2}^{j-1}\lambda_{2})}{1-a_{2}^{n+1}}\prod_{k=0}^{n-1}\frac{S_{2}(a_{2}^{k+j}\lambda_{2})}{1-a_{2}^{k+1}}},\,j=1,2,\ldots.\label{s3}
\end{equation}
\begin{remark}
    Note that by letting $i\to\infty$, the right-hand side of \eqref{pi} converges to 1, which coincides with $\lim_{i\to\infty}f(\lambda_{1}a_{1}^{i},0)=f(0,0)=1$. Similarly, by letting $j\to\infty$, the right-hand side of \eqref{s3} converges to 1, which coincides with $\lim_{j\to\infty}f(0,\lambda_{2}a_{2}^{j})=f(0,0)=1$. 
\end{remark}

Since $P_{i}$, $Q_{j}$, $i,j=1,2,\ldots,$ are known, we are ready to derive $A_{1,1}$, $A_{1,j}$ $j=2,\ldots$, $A_{i,1}$, $i=2,\ldots$ from \eqref{bza3}. In particular, by substituting $(s_{1},s_{2})=(\lambda_{1}a_{1},\lambda_{2}a_{2})$, $(s_{1},s_{2})=(\lambda_{1}a_{1},\lambda_{2}a_{2}^{n})$, $n=2,3,\ldots$, $(s_{1},s_{2})=(\lambda_{1}a_{1}^{m},\lambda_{2}a_{2})$, $m=2,3,\ldots$ in \eqref{bza3}, the unknown terms $A_{1,1}$, $A_{1,n}$ $n=2,\ldots$, $A_{m,1}$, $m=2,\ldots$ satisfy the following system of equations:
\begin{displaymath}
    \begin{array}{l}
        A_{1,1}(1-\tilde{C}_{1,1}(\lambda_{1}a_{1},\lambda_{2}a_{2}))-\sum_{i=2}^{\infty}\tilde{C}_{i,1}(\lambda_{1}a_{1},\lambda_{2}a_{2})A_{i,1}-\sum_{j=2}^{\infty}\tilde{C}_{1,j}(\lambda_{1}a_{1},\lambda_{2}a_{2})A_{1,j}\vspace{2mm}\\=-\sum_{j=1}^{\infty}\tilde{D}_{j}(\lambda_{1}a_{1},\lambda_{2}a_{2})Q_{j}-\sum_{i=1}^{\infty}\tilde{K}_{i}(\lambda_{1}a_{1},\lambda_{2}a_{2})P_{i}+\prod_{k=0}^{\infty}c_{1}(\lambda_{1}a_{1}^{k+1},\lambda_{2}a_{2}^{k+1}), \vspace{2mm}\\
         A_{1,n}(1-\tilde{C}_{1,n}(\lambda_{1}a_{1},\lambda_{2}a_{2}^{n}))-A_{1,1}\tilde{C}_{1,1}(\lambda_{1}a_{1},\lambda_{2}a_{2}^{n})-\sum_{i=2}^{\infty}\tilde{C}_{i,1}(\lambda_{1}a_{1},\lambda_{2}a_{2}^{n})A_{i,1}-\sum_{j=2,j\neq n}^{\infty}\tilde{C}_{1,j}(\lambda_{1}a_{1},\lambda_{2}a_{2}^{n})A_{1,j}\vspace{2mm}\\=-\sum_{j=1}^{\infty}\tilde{D}_{j}(\lambda_{1}a_{1},\lambda_{2}a_{2}^{n})Q_{j}-\sum_{i=1}^{\infty}\tilde{K}_{i}(\lambda_{1}a_{1},\lambda_{2}a_{2}^{n})P_{i}+\prod_{k=0}^{\infty}c_{1}(\lambda_{1}a_{1}^{k+1},\lambda_{2}a_{2}^{k+n}),\,n=2,3,\ldots,\vspace{2mm}\\
         A_{m,1}(1-\tilde{C}_{m,1}(\lambda_{1}a_{1}^{m},\lambda_{2}a_{2}))-A_{1,1}\tilde{C}_{1,1}(\lambda_{1}a_{1}^{m},\lambda_{2}a_{2})-\sum_{i=2,i\neq m}^{\infty}\tilde{C}_{i,1}(\lambda_{1}a_{1}^{m},\lambda_{2}a_{2})A_{i,1}-\sum_{j=2}^{\infty}\tilde{C}_{1,j}(\lambda_{1}a_{1}^{m},\lambda_{2}a_{2})A_{1,j}\vspace{2mm}\\=-\sum_{j=1}^{\infty}\tilde{D}_{j}(\lambda_{1}a_{1}^{m},\lambda_{2}a_{2})Q_{j}-\sum_{i=1}^{\infty}\tilde{K}_{i}(\lambda_{1}a_{1}^{m},\lambda_{2}a_{2})P_{i}+\prod_{k=0}^{\infty}c_{1}(\lambda_{1}a_{1}^{k+m},\lambda_{2}a_{2}^{k+1}),\,m=2,3,\ldots
    \end{array}
\end{displaymath}
Having obtained $A_{1,1}$, $A_{i,1}$, $A_{1,j}$, $P_{i}$, $Q_{j}$, $i,j=1,2,\ldots,$ the joint LST $f(s_{1},s_{2})$ is given by \eqref{bza3}. Note that in practice, since $a_{i}\in[0,1)$, $i=1,2,$ the series that appear above converge very rapidly, so we usually need a small finite number of iterations.  
\section{Conclusion and suggestions for future research}\label{conc}
In this work, we dealt with vector-valued recursions between random vectors that lead to functional equations of the form \eqref{opq}, \eqref{basic0}, \eqref{masip}, \eqref{bzaa}. The general theory to treat \eqref{opq}, \eqref{basic0} is presented in Section \ref{theory}. The theoretical results are applied in a series of queueing related examples in Section \ref{appl}. In Section \ref{multi}, we dealt with some multidimensional processes that result in \eqref{masip}, \eqref{bzaa}. Although, their form is different from that of \eqref{opq}, the solution machinery is similar. In particular, we focused on a modulated ASIP two queue tandem system and on a reflected VAR(1) process, where the autoregressive parameter is now a given $N\times N$ matrix.

An interesting topic for future research is to cope with the case where we have vector-valued functional equations in which the involved contraction mappings are not commutative. For the scalar case (i.e., non-modulated), there exist some available scarce results, see \cite{borst1993m}, \cite{tin}. It would be interesting to further investigate what can still be accomplished both for the scalar and the vector-valued case, when we are dealing with a noncommutative contraction mapping. Other options for future research refer to the case where $R_{n}(Y_{n})$ may take negative values, as well as to fully consider the case where the autoregressive parameter is a scalar or a random matrix. A first attempt was made in subsection \ref{var2}, where the autoregressive parameter was a scalar matrix, and where our focus was on the case where this matrix is diagonal, and specifically when $N=2$. A natural extension should be to investigate the case where this matrix is of arbitrary dimension, and not only diagonal. 

\section*{Acknowledgements} I. Dimitriou gratefully acknowledges the Empirikion Foundation, Athens, Greece (\href{https://www.empirikion.gr/}{www.empirikion.gr}) for the financial support of this work. This work is dedicated to the memory of Antonis Kontis.
\appendix
\section{Proof of Theorem \ref{th-masip-full}}\label{appen}
By iterating \eqref{masip} $n$ times, we obtain,
\begin{equation}
\tilde{Z}(s,t)
= \sum_{k=0}^{n-1}\sum_{\gamma\in\Gamma_k}
\mathcal{K}_\gamma(s,t)\,K(s_k,t_k)
+ \sum_{\gamma\in\Gamma_n}
\mathcal{K}_\gamma(s,t)\,\tilde{Z}(s_n,t_n).
\label{masip-iter-final}
\end{equation}
Due to the commutative contraction mappings $\alpha_i(u)$ with common fixed point $a$, we know that there exists $\kappa\in(0,1)$ such that for $u\in\mathbb{C}_+$, $|\alpha_i(u)-a|\le \kappa |u-a|$. By the contraction property and the fact that $\alpha_i(a)=a$, we obtain recursively
\[
|t_k-a| = |\alpha_{i_k}(t_{k-1}) - \alpha_{i_k}(a)|
\le \kappa |t_{k-1}-a|
\le \kappa^k |t_0-a|=\kappa^k |t-a|.
\]
Since $s_m$ is either $s_{m-1}$ or $t_{m-1}$, it follows that $(s_k,t_k)\to(a,a)$ as $k\to\infty$.

\medskip
\noindent
\textbf{Step 1: Lipschitz continuity of $\tilde{Z}(s,t)$.}

Let $(s,t),(u,v)\in\mathbb{C}_+\times\mathbb{C}_+$. Then,
\begin{align*}
|z_i(s,t)-z_i(u,v)|
&= \left| \mathbb{E}\Big( \big(e^{-sW_1-tW_2}-e^{-uW_1-vW_2}\big)1_{\{Y=i\}} \Big) \right|\le \mathbb{E}\Big( \big|e^{-sW_1-tW_2}-e^{-uW_1-vW_2}\big|\,1_{\{Y=i\}} \Big).
\end{align*}
Note that
\begin{align*}
e^{-sW_1-tW_2}-e^{-uW_1-vW_2}
&= \big(e^{-sW_1}-e^{-uW_1}\big)e^{-tW_2}
+ e^{-uW_1}\big(e^{-tW_2}-e^{-vW_2}\big).
\end{align*}
Hence,
\begin{align*}
|e^{-sW_1-tW_2}-e^{-uW_1-vW_2}|
&\le |e^{-sW_1}-e^{-uW_1}|\,|e^{-tW_2}|
+ |e^{-uW_1}|\,|e^{-tW_2}-e^{-vW_2}|.
\end{align*}
Since $Re(s)\ge 0$, $Re(u)\ge 0$, $Re(t)\ge 0$, $Re(v)\ge 0$, it is readily seen that $|e^{-sW_1}|\le 1$, $|e^{-uW_1}|\le 1$, $|e^{-tW_2}|\le 1$, $|e^{-vW_2}|\le 1$. Moreover, by the mean value theorem, we obtain
\[
|e^{-sW_1}-e^{-uW_1}| \le |s-u|\,W_1,
\qquad
|e^{-tW_2}-e^{-vW_2}| \le |t-v|\,W_2.
\]
Combining the above we have
\begin{align*}
|z_i(s,t)-z_i(u,v)|
&\le \mathbb{E}\big( |s-u|W_1\,1_{\{Y=i\}} \big)
+ \mathbb{E}\big( |t-v|W_2\,1_{\{Y=i\}} \big) \\
&= |s-u|\,\mathbb{E}(W_1 1_{\{Y=i\}})
+ |t-v|\,\mathbb{E}(W_2 1_{\{Y=i\}}).
\end{align*}
Define
\[
C_1 := \max_{1\le i\le N}\mathbb{E}(W_1 1_{\{Y=i\}}), 
\qquad
C_2 := \max_{1\le i\le N}\mathbb{E}(W_2 1_{\{Y=i\}}).
\]
Then,
\[
|z_i(s,t)-z_i(u,v)| \le C_1 |s-u| + C_2 |t-v|.
\]
Taking the maximum over $i=1,\ldots,N$, we obtain
\[
\|\tilde{Z}(s,t)-\tilde{Z}(u,v)\|_\infty
= \max_{1\le i\le N} |z_i(s,t)-z_i(u,v)|
\le C_1 |s-u| + C_2 |t-v|.
\]
Setting $C:=\max\{C_1,C_2\}$, we conclude that
\[
\|\tilde{Z}(s,t)-\tilde{Z}(u,v)\|_\infty
\le C\big(|s-u|+|t-v|\big),
\]
which shows that $\tilde{Z}(s,t)$ is Lipschitz continuous on $\mathbb{C}_+\times\mathbb{C}_+$.

\medskip
\noindent
\textbf{Step 2: Bound on $\mathcal{K}_\gamma(s,t)$.}

Since $\tilde{P}^{(i)}=e_i e_i^T$, right multiplication by $\tilde{P}^{(i)}$ retains only the $i$-th column of a matrix. Therefore, by induction on $k$, each product $\mathcal{K}_\gamma(s,t)$ has at most one nonzero column (namely the $i_k$-th column).
Let $M(a):=\max\{\|R(a,a)\|_1,\|T(a)\|_1\}$. By analyticity of $R(s,t)$ and $T(s)$, for every $\epsilon>0$ there exists $\delta>0$ such that for $(u,v)$ sufficiently close to $(a,a)$, we have $\|R(u,v)\|_1,\ \|T(u)\|_1 \le M(a)+\epsilon$.

Since $(s_m,t_m)\to(a,a)$, for sufficiently large $m$ there exists $k_0$ such that for all $m\ge k_0$, $(s_m,t_m)$ lies in a neighborhood of $(a,a)$. Hence,
\begin{displaymath}
    \|\mathcal{K}_\gamma(s,t)\|_1
\le C_0 (M(a)+\epsilon)^{k-k_0}
\le C (M(a)+\epsilon)^k,
\end{displaymath}
for some constant $C>0$ independent of $\gamma$.

\medskip
\noindent
\textbf{Step 3: Decomposition of the remainder.}

Let
\begin{displaymath}
    R_n(s,t):=\sum_{\gamma\in\Gamma_n}
\mathcal{K}_\gamma(s,t)\,\tilde{Z}(s_n,t_n),
\end{displaymath}
and write $\tilde{Z}(s_n,t_n)
= \tilde{Z}(a,a) + \big(\tilde{Z}(s_n,t_n)-\tilde{Z}(a,a)\big)$, so that 
\begin{displaymath}
    R_n(s,t)=S_n(s,t)+D_n(s,t),
\end{displaymath}
where
\begin{displaymath}
    \begin{array}{rl}
         S_n(s,t):=&\sum_{\gamma\in\Gamma_n}
\mathcal{K}_\gamma(s,t)\,\tilde{Z}(a,a),\\
D_n(s,t):=&\sum_{\gamma\in\Gamma_n}
\mathcal{K}_\gamma(s,t)\big(\tilde{Z}(s_n,t_n)-\tilde{Z}(a,a)\big).
    \end{array}
\end{displaymath}

\medskip
\noindent
\textbf{Step 4: Vanishing of $D_n(s,t)$.}

By exploiting the projection property and the bounds above, we obtain
\begin{displaymath}
    \|D_n(s,t)\|_\infty
\le \max_{\gamma\in\Gamma_n}
\|\mathcal{K}_\gamma(s,t)\|_1
\|\tilde{Z}(s_n,t_n)-\tilde{Z}(a,a)\|_\infty.
\end{displaymath}
Hence,
\begin{displaymath}
    \|D_n(s,t)\|_\infty
\le C (M(a)+\epsilon)^n \kappa^n.
\end{displaymath}
Since $M(a)\kappa<1$, there exists $\epsilon>0$ such that $(M(a)+\epsilon)\kappa<1$, and thus $D_n(s,t)\to 0$.

\medskip
\noindent
\textbf{Step 5: Convergence of $(S_n)$.}

Using the recursive structure of $\mathcal{K}_\gamma(s,t)$, we obtain (see also the corresponding step in the proof of Theorem \ref{th1})
\[
S_{n+1}-S_n
= \sum_{\gamma\in\Gamma_n}
\mathcal{K}_\gamma(s,t)\,
\big(A_\gamma(s_n,t_n)-A_\gamma(a,a)\big)\tilde{Z}(a,a),
\]
where 
\begin{displaymath}
    A_\gamma(s,t) =
\begin{cases}
R(s,t), & \text{if the last step of }\gamma\text{ is }R,\\
T(s), & \text{if the last step of }\gamma\text{ is }T.
\end{cases}
\end{displaymath}

Since $R(s,t)$ and $T(s)$ are analytic, they are continuously differentiable in a neighborhood of $(a,a)$. Therefore, there exists $L>0$ such that for $(u,v)$ sufficiently close to $(a,a)$,
\begin{displaymath}
    \|A_\gamma(u,v)-A_\gamma(a,a)\|_1
\le L(|u-a|+|v-a|),
\end{displaymath}
uniformly for $A_\gamma\in\{R,T\}$. Applying this bound with $(u,v)=(s_n,t_n)$ and using the contraction property, we obtain
\begin{displaymath}
    \|A_\gamma(s_n,t_n)-A_\gamma(a,a)\|_1
\le C \kappa^n,
\end{displaymath}
for some constant $C>0$. Then, by using the one-column structure of $\mathcal{K}_\gamma(s,t)$ that is inherited by the projection property, we have
\begin{displaymath}
    \|S_{n+1}(s,t)-S_n(s,t)\|_\infty
\le \max_{\gamma\in\Gamma_n}
\|\mathcal{K}_\gamma(s,t)\|_1
\cdot
\|A_\gamma(s_n,t_n)-A_\gamma(a,a)\|_1
\cdot
\|\tilde{Z}(a,a)\|_\infty.
\end{displaymath}
Thus,
\begin{displaymath}
    \|S_{n+1}(s,t)-S_n(s,t)\|_\infty
\le C (M(a)+\epsilon)^n \kappa^n.
\end{displaymath}

Since $(M(a)+\epsilon)\kappa<1$, it follows that
\[
\sum_{n=0}^\infty \|S_{n+1}-S_n\|_\infty < \infty,
\]
and therefore $(S_n)$ is a Cauchy sequence in $\mathbb{C}^N$, and therefore, it is convergent.

\medskip
\noindent
\textbf{Step 6: Absolute convergence of the series.}

Since $(s_k,t_k)\to(a,a)$ and $K$ is analytic, $\|K(s_k,t_k)\|_\infty$ is bounded. Using again the one-column structure due to the projection property, we obtain
\begin{displaymath}
    \left\|
\sum_{\gamma\in\Gamma_k}
\mathcal{K}_\gamma(s,t)\,K(s_k,t_k)
\right\|_\infty
\le C (M(a)+\epsilon)^k.
\end{displaymath}
Since $M(a)\kappa<1$, there exists $\epsilon>0$ such that $(M(a)+\epsilon)\kappa<1$, and the series converges absolutely.

Letting $n\to\infty$ in \eqref{masip-iter-final}, we finally obtain
\begin{displaymath}
\tilde{Z}(s,t)
= \sum_{k=0}^{\infty}\sum_{\gamma\in\Gamma_k}
\mathcal{K}_\gamma(s,t)\,K(s_k,t_k)
+ \lim_{n\to\infty}\sum_{\gamma\in\Gamma_n}
\mathcal{K}_\gamma(s,t)\,\tilde{Z}(s_n,t_n),
\end{displaymath}
which completes the proof.
%\section{Heavy-traffic scaling limit}
%The authors in \cite{box1} imposed a heavy-traffic scaling on the reflected autoregressive process and showed that the resulting heavy-traffic approximation is a reflected Ornstein-Uhlenbeck process. Maybe we can arrive in a Markov-modulated Ornstein-Uhlenbeck process (see also \cite{mmou}) for the modulated reflected autoregressive process discussed in section \ref{mmc}. In view of \cite{mmou}, we can focus on two scaled versions:
%\begin{itemize}
 %   \item To linearly speed up the background process, i.e., $X(t)\mapsto X(Nt)$.
  %  \item To consider the simultaneous scaling of the 
%\end{itemize}
\bibliographystyle{abbrv}

\bibliography{mybibfile}

@article{boxman,
  title={Queueing and risk models with dependencies},
  author={Boxma, OJ and Mandjes, MRH},
  journal={Queueing Systems},
  volume={102},
  number={1-2},
  pages={69--86},
  year={2022},
  publisher={Springer}
}

@article{smit,
  title={The queue {$GI/M/s$} with customers of different types or the queue {$GI/H_{m}/s$}},
  author={De Smit, Jos HA},
  journal={Advances in Applied Probability},
  volume={15},
  number={2},
  pages={392--419},
  year={1983},
  publisher={Cambridge University Press}
}

@article{cohen,
  title={The {W}iener-{H}opf technique in applied probability},
  author={Cohen, J. W.},
  journal={Journal of Applied Probability},
  volume={12},
  number={S1},
  pages={145--156},
  year={1975},
  publisher={Cambridge University Press}
}

@article{dimitriou2024,
  title={Some reflected autoregressive processes with dependencies},
  author={Dimitriou, Ioannis and Fiems, Dieter},
  journal={Queueing Systems},
 volume={106},
  pages={67--127},
  year={2024},
  publisher={Springer}
}

@article{albbox,
  title={A ruin model with dependence between claim sizes and claim intervals},
  author={Albrecher, Hansj{\"o}rg and Boxma, Onno J},
  journal={Insurance: Mathematics and Economics},
  volume={35},
  number={2},
  pages={245--254},
  year={2004},
  publisher={Elsevier}
}

@article{asmkella,
  title={A multi-dimensional martingale for {M}arkov additive processes and its applications},
  author={Asmussen, S{\o}ren and Kella, Offer},
  journal={Advances in Applied Probability},
  volume={32},
  number={2},
  pages={376--393},
  year={2000},
  publisher={Cambridge University Press}
}

@article{glas,
  title={Stochastic vector difference equations with stationary coefficients},
  author={Glasserman, Paul and Yao, David D},
  journal={Journal of Applied Probability},
  volume={32},
  number={4},
  pages={851--866},
  year={1995},
  publisher={Cambridge University Press}
}

@article{boxkelres,
  title={ASIP tandem queues with L{\'e}vy input and consumption},
  author={Boxma, Onno and Kella, Offer and Resing, Jacques},
  journal={Available at SSRN 5055197},
  year={2024}
}

@incollection{desmit,
  title={Explicit {W}iener-{H}opf factorizations for the analysis of multidimensional queues},
  author={De Smit, Jos HA},
  booktitle={Advances in Queueing Theory, Methods, and Open Problems},
  pages={293--310},
  year={2023},
  publisher={CRC Press}
}

@article{kesten,
author = {Harry Kesten},
title = {{Random difference equations and Renewal theory for products of random matrices}},
volume = {131},
journal = {Acta Mathematica},
number = {none},
publisher = {Institut Mittag-Leffler},
pages = {207 -- 248},
year = {1973},
}

@book{bura,
  title={Stochastic models with power-law tails: The equation X= AX+ B},
  author={Buraczewski, Dariusz and Damek, Ewa and Mikosch, Thomas},
  year={2016},
address		= "Cham",
  publisher={Springer}
}

@article{dimitriou2024markov,
  title={On {M}arkov-dependent reflected autoregressive processes and related models},
  author={Dimitriou, Ioannis},
  journal={Queueing Systems},
  volume={109},
  number={3},
  pages={1--39},
  year={2025},
  publisher={Springer}
}

@article{tin,
  title={A queueing system with {M}arkov-dependent arrivals},
  author={Tin, Pyke},
  journal={Journal of Applied Probability},
  volume={22},
  number={3},
  pages={668--677},
  year={1985},
  publisher={Cambridge University Press}
}

@article{borst1993m,
  title={An {M/G/1} queue with customer collection},
  author={Borst, SC and Boxma, OJ and Comb\'e, MB},
  journal={Stochastic Models},
  volume={9},
  number={3},
  pages={341--371},
  year={1993},
  publisher={Taylor \& Francis}
}

@article{kumar2021,
  title={On first-come, first-served queues with three classes of impatient customers},
  author={Kumar, Vinay and Upadhye, Neelesh S},
  journal={International Journal of Advances in Engineering Sciences and Applied Mathematics},
  volume={13},
  number={4},
  pages={368--382},
  year={2021},
  publisher={Springer}
}

@article{vlasioudep,
title = {A two-station queue with dependent preparation and service times},
journal = {European Journal of Operational Research},
volume = {195},
number = {1},
pages = {104-116},
year = {2009},
author = {M. Vlasiou and I.J.B.F. Adan and O.J. Boxma},
keywords = {Alternating service, Lindley-type equation, Markov-modulation, Wiener–Hopf decomposition},
abstract = {We discuss a single-server multi-station alternating queue where the preparation times and the service times are auto- and cross-correlated. We examine two cases. In the first case, preparation and service times depend on a common discrete time Markov chain. In the second case, we assume that the service times depend on the previous preparation time through their joint Laplace transform. The waiting time process is directly analysed by solving a Lindley-type equation via transform methods. Numerical examples are included to demonstrate the effect of the auto-correlation of and the cross-correlation between the preparation and service times.}
}

@article{box1,
  title={On a class of reflected {AR}(1) processes},
  author={Boxma, Onno and Mandjes, Michel and Reed, Josh},
  journal={Journal of Applied Probability},
  volume={53},
  number={3},
  pages={818--832},
  year={2016},
  publisher={Cambridge University Press}
}

@book{tit,
  title={The theory of functions},
  author={Titchmarsh, Edward C},
  year={1939},
  publisher={Oxford University Press, USA}
}

@article{adan3,
  title={On first-come, first-served queues with two classes of impatient customers},
  author={Adan, Ivo and Hathaway, Brett and Kulkarni, Vidyadhar G},
  journal={Queueing Systems},
  volume={91},
  number={1-2},
  pages={113--142},
  year={2019},
  publisher={Springer}
}

@article{adan2,
  title={Single-server queue with {M}arkov-dependent inter-arrival and service times},
  author={Adan, I. and Kulkarni, V.G.},
  journal={Queueing Systems},
  volume={45},
  pages={113--134},
  year={2003},
  publisher={Springer}
}

@book{rudin,
  title={Functional Analysis},
  author={Rudin, W.},
  isbn={9780070995581},
  lccn={71039686},
  url={https://books.google.gr/books?id=h7RdNAAACAAJ},
  year={1974},
  publisher={Tata McGraw-Hill, New York}
}

@book{die,
  title={Foundations of Modern Analysis},
  author={J. Dieudonne},
  year={1969},
  publisher={Academic Press, New York}
}

@article{paman,
title = {Extensions of {L}iouville theorems},
journal = {Journal of Mathematical Analysis and Applications},
volume = {90},
number = {1},
pages = {58-63},
year = {1982},
issn = {0022-247X},
author = {P Ramankutty},
abstract = {The method of deriving Liouville's theorem for subharmonic functions in the plane from the corresponding Hadamard three-circles theorem is extended to a more general and abstract setting. Two extensions of Liouville's theorem for vector-valued holomorphic functions of several complex variables are also mentioned.}
}

@article{regte,
  abstract = {We study an M|G|1 queue in which both the arrival rate and the service time distribution depend on the state of an underlying finite-state Markov chain. The solution is obtained by a matrix factorization method. This leads to results for waiting times and queue lengths both at arrival epochs and in continuous time. A numerical algorithm for the calculation of several quantities of interest is described and some numerical examples are given.},
 author = {G. J. K. Regterschot and J. H. A. de Smit},
 journal = {Mathematics of Operations Research},
 number = {3},
 pages = {465--483},
 publisher = {INFORMS},
 title = {The Queue {$M/G/1$} with {M}arkov Modulated Arrivals and Services},
 volume = {11},
 year = {1986}
}

@book{allan,
  title={Introduction to Banach spaces and algebras},
  author={Allan, Graham and Allan, Graham R and Dales, Harold G},
  series={Oxford Graduate Texts in Math.},
  number={20},
  year={2011},
  publisher={Oxford University Press}
}

@article{box2,
  title={A multiplicative version of the {L}indley recursion},
  author={Boxma, Onno and L{\"o}pker, Andreas and Mandjes, Michel and Palmowski, Zbigniew},
  journal={Queueing Systems},
  volume={98},
  pages={225--245},
  year={2021},
  publisher={Springer}
}

@article{adan,
  title={Functional equations with multiple recursive terms},
  author={Adan, Ivo and Boxma, Onno and Resing, Jacques},
  journal={Queueing Systems},
  volume={102},
  number={1-2},
  pages={7--23},
  year={2022},
  publisher={Springer}
}

@book{lanc,
  title={The theory of matrices: with applications},
  author={Lancaster, Peter and Tismenetsky, Miron},
  year={1985},
  publisher={Elsevier}
}

@article{shots,
  title={Shot-noise queueing models},
  author={Boxma, Onno and Mandjes, Michel},
  journal={Queueing Systems},
  volume={99},
  number={1},
  pages={121--159},
  year={2021},
  publisher={Springer}
}

@article{box3,
  title={On two classes of reflected autoregressive processes},
  author={Boxma, Onno and L{\"o}pker, Andreas and Mandjes, Michel},
  journal={Journal of Applied Probability},
  volume={57},
  number={2},
  pages={657--678},
  year={2020},
  publisher={Cambridge University Press}
}

@phdthesis{vlasiou,
title = "Lindley-type recursions",
author = "M. Vlasiou",
year = "2006",
doi = "10.6100/IR610563",
language = "English",
isbn = "90-386-0784-9",
publisher = "Technische Universiteit Eindhoven",
school = "Technische Universiteit Eindhoven",
}

@article{hoo,
  title={On a modified version of the {L}indley recursion},
  author={Huang, Dongzhou},
  journal={Queueing Systems},
  volume={105},
  number={3},
  pages={271--289},
  year={2023},
  publisher={Springer}
}

 \end{document}